\newtheorem{theorem}{Theorem} 
\newtheorem{lemma}[theorem]{Lemma}
\newtheorem{assumption}[theorem]{Assumption}
\theoremstyle{remark}
\newtheorem{remark}{Remark}
\numberwithin{remark}{section}
\numberwithin{theorem}{section}
\numberwithin{equation}{section}
\newcommand{\tri}{\mathcal{T}}
\newcommand{\triH}{\tri_H}
\newcommand{\dx}{\operatorname*{d}\hspace{-0.3ex}x}
\newcommand{\ds}{\operatorname*{d}\hspace{-0.3ex}s}
\newcommand{\dist}{\operatorname*{dist}}
\newcommand{\kernel}{\operatorname*{kernel}}
\newcommand{\support}{\operatorname*{supp}}
\newcommand{\N}{\mathcal{N}}
\newcommand{\R}{\mathbb{R}}
\newcommand{\C}{\mathbb{C}}
\newcommand{\IH}{\mathcal{I}_H}
\newcommand{\Inodal}{\mathcal{I}^{\operatorname{nodal}}_H}
\newcommand{\Ih}{\mathcal{I}^{\operatorname{nodal}}_h}
\newcommand{\Cint}{C_{\IH}}
\newcommand{\Col}{C_{\operatorname*{ol}}}
\newcommand{\Coll}[1]{C_{\operatorname*{ol},#1}}
\newcommand{\Cloc}[1]{C_{\operatorname*{loc},#1}}
\newcommand{\Cdec}{C_{\operatorname*{dec}}}
\newcommand{\Vp}{V_{H,\infty}}
\newcommand{\Vd}{V^*_{H,\infty}}
\newcommand{\Vpl}{V_{H,\ell}}
\newcommand{\Vplh}{V_{H,\ell,h}}
\newcommand{\Vdl}{V^*_{H,\ell}}
\newcommand{\Vdlh}{V^*_{H,\ell,h}}
\newcommand{\vdl}{v_{H,\ell}}
\newcommand{\vdlh}{v_{H,\ell,h}}
\newcommand{\vpl}{v_{H,\ell}}
\newcommand{\vp}{v_{H,\infty}}
\newcommand{\zdl}{z_{H,\ell}}
\newcommand{\zd}{z_{H,\infty}}
\newcommand{\upl}{u_{H,\ell}}
\newcommand{\uplh}{u_{H,\ell,h}}
\newcommand{\epl}{e_{H,\ell}}
\newcommand{\ep}{e}
\newcommand{\vd}{v_{H,\infty}}
\newcommand{\up}{u_{H,\infty}}
\renewcommand{\k}{\kappa}
\newcommand{\Cstab}{C_{\operatorname*{st}}}
\newcommand{\Cpstab}{C_{\operatorname*{st}}'}
\newcommand{\bstab}{{n}}
\newcommand{\Cor}{\mathcal{C}}
\newcommand{\Ca}{C_a}
\newcommand{\CC}{C_{\Cor}}
\newcommand{\CPH}{C_{\Pi_H}}
\newcommand{\Iloc}{\mathcal{I}_H^{-1,\operatorname{loc}}}
\title[Eliminating pollution by subscale correction]{Eliminating the pollution effect in Helmholtz problems by local subscale correction}
\author{Daniel Peterseim}
\address[D. Peterseim]{Rheinische Friedrich-Wilhelms-Universit\"at Bonn, Institute for Numerical Simulation, Wegelerstr. 6, 53115 Bonn, Germany}
\email{peterseim@ins.uni-bonn.de}
\date{\today}
\keywords{pollution effect, finite element, multiscale method, numerical homogenization}
\subjclass[2000]{65N12, 65N15,65N30}
\begin{document}

\begin{abstract}
We introduce a new Petrov-Galerkin multiscale method for the numerical approximation of the Helmholtz equation with large wave number $\k$ in bounded domains in $\mathbb{R}^d$. The discrete trial and test spaces are generated from standard mesh-based finite elements by local subscale correction in the spirit of numerical homogenization. The precomputation of the correction involves the solution of coercive cell problems on localized subdomains of size $\ell H$; $H$ being the mesh size and $\ell$ being the oversampling parameter. If the mesh size and the oversampling parameter are such that $H\k$ and $\log(\k)/\ell$ fall below some generic constants and if the cell problems are solved sufficiently accurate on some finer scale of discretization, then the method is stable and its error is proportional to $H$; pollution effects are eliminated in this regime.
\end{abstract}

\maketitle

\section{Introduction}
The numerical solution of the Helmholtz equation by the finite element method or related schemes in the regime of large wave numbers is still among the most challenging tasks of computational partial differential equations. The highly oscillatory nature of the solution plus a wave number dependent pollution effect puts very restrictive assumptions on the smallness of the underlying mesh. Typically, this condition is much stronger than the minimal requirement for a meaningful representation of highly oscillatory functions from approximation theory, that is, to have at least $5-10$ degrees of freedom per wave length and coordinate direction. 

The wave number dependent preasymptotic effect denoted as pollution or numerical dispersion is well understood by now and many attempts have been made to overcome or at least reduce it; see \cite{MR2219901,MR2551150,MR2813347,perugia,Zitelli20112406,dpg} among many others. 
However, for many standard methods, this is not possible in 2d or 3d \cite{BabSau}. A breakthrough in this context is the work of Melenk and Sauter \cite{MS10,MS11,parsania}. It shows that for certain model Helmholtz problems, the pollution effect can be suppressed in principle by simply coupling the polynomial degree $p$ of the Galerkin finite element space to the wave number $\k$ via the relation $p\approx\log\k$. Under this moderate assumption, the method is stable and quasi-optimal if the mesh size $H$ satisfies $H\k/p\lesssim 1$ (plus certain adaptive refinement towards non-smooth geometric features). It is worth noting that this result does not require the analyticity of the solution but only $W^{2,2}$-regularity and, thus, partially explains the common sense that higher-order methods are less sensitive to pollution. However, for less regular solutions as they appear for the scattering of waves from non-smooth objects or inhomogeneities of the material, the result is not directly applicable and the existence of a pollution-free discretization scheme remained open.

Scale-dependent preasymptotic effects are also observed in simpler diffusion problems with highly oscillatory diffusion tensor and numerical homogenization provides techniques to avoid those effects. 
Numerical homogenization (or upscaling) refers to a class of multiscale methods for the efficient approximation on coarse meshes that do not resolve the coefficient oscillations. A novel method for this problem was recently introduced in \cite{MP14} and further generalized in \cite{EGMP13,HMP13,HP12,HMP14}. The method is based on localizable orthogonal decompositions (LOD) into a low-dimensional coarse space (where we are looking for the approximation) and a high-dimensional remainder space. Some selectable quasi-interpolation operator serves as the basis of the decompositions. The coarse space is spanned by some precomputable basis functions with local support. The method provides text book convergence independent of the variations of the coefficient and without any preasymptotic effects under fairly general assumptions on the diffusion coefficient; periodicity or scale separation are not required.

This paper adapts the multiscale method of \cite{MP14} to cure pollution in the numerical approximation of the Helmholtz problem. To deal with the lack of hermitivity we will propose a Petrov-Galerkin version of the method (although this is not essential). We will construct a finite-dimensional trial space and corresponding test space for the approximation of the unknown solution $u$. The trial and test spaces are generated from standard mesh-based finite elements by local subscale correction. The precomputation of the correction involves the solution of $\mathcal{O}(H^{-d})$ coercive cell problems on localized subdomains of size $\ell H$; $H$ being the mesh size and $\ell$ being the adjustable oversampling parameter. If the data of the problem (domain, boundary condition, force term) allows for polynomial-in-$\k$ bounds of the solution operator and if the mesh size and the oversampling parameter of the method are such that the resolution condition $H\k\lesssim 1$ and the oversampling condition $\ell\gtrsim \log(\k)$ are satisfied, then the method is stable. If, moreover, $\ell\gtrsim \log(|H|)\log(\k) $ then the method satisfies the error estimate
\begin{equation*}
 \k\|u-u_{H,\ell}\|_{L^2(\Omega)}+\|\nabla(u-u_{H,\ell})\|_{L^2(\Omega)}\lesssim H\|f\|_{L^2(\Omega)}
\end{equation*}
with some generic constant $C>0$ of $\k$. For a fairly large class of Helmholtz problems, including the acoustic scattering from convex non-smooth objects, this result shows that pollution effects can be suppressed under the quasi-minimal resolution condition $H\k\leq \mathcal{O}(1)$ at the price of a moderate increase of the inter-element communication, i.e., logarithmic-in-$\k$ oversampling. Using a terminology from finite difference methods, this means that the stencil is moderately enlarged. The complexity overhead due to oversampling is comparable with that of \cite{MS10,MS11}, where instead of increasing the inter-element communication, the number of degrees of freedom per element is increased via the polynomial degree which is coupled to $\log\k$ in a similar way. 

While \cite{BabSau} shows that pollution cannot be avoided with a fixed stencil, the result shows that already a logarithmic-in-$\k$ growths of the stencil  can suffice to eliminate pollution. Although the result is constructive, its practical relevance for actual computations is not immediately clear in any case. The multiscale method presented in this paper requires the precomputation of local cell problems on a finer scale of numerical resolution. The suitable choice of this fine scale depends on the stability properties of the problem in the same way as standard finite element methods do. However, the local cell problems are independent and, though being Helmholtz problems, they are somewhat simpler because they are essentially low-frequency when compared to the characteristic length scale of the cell. Still, the worst-case (serial) complexity of the method can exceed the cost of a direct numerical simulation on the global fine scale. We expect a significant gain with respect to computational complexity in the following cases:
\begin{itemize}
 \item The precomputation can be reused several times, e.g., if the problem (with the same geometric setting and wave number) has to be solved for a large number of force terms or incident wave directions in the context of parameter studies, coupled problems or inverse problems.\vspace{1ex}
 \item The (local) periodicity of the computational mesh can be exploited so that the number of local problems can be reduced drastically.
\end{itemize}
We also expect that the redundancy of the local problems can be exploited in rather general unstructured meshes by modern techniques of model order reduction \cite{MR2430350,MR3247814}. However, this possibility requires a careful algorithmic design and error analysis which are beyond the scope of this paper and remain a future perspective of the method. A similar statement applies to the case of heterogeneous media. This application and the generalization of the method are very natural and straight forward. Though this case is not yet covered, previous work \cite{MP14,EGMP13,HMP13,HP12,HMP14} plus the analysis of this paper strongly indicate the potential of the method to treat high oscillations or jumps in the PDE coefficients and the pollution effect in one stroke.  

The remaining part of the paper is outlined as follows. Section~\ref{s:model} defines the model Helmholtz problem and recalls some of its fundamental properties. Section~\ref{s:pre} introduces standard finite element spaces and corresponding quasi-interpolation operators that will be the basis for 
the derivation of a prototypical multiscale method in Section~\ref{s:omd}. Sections~\ref{s:loc} and \ref{s:fine} will then turn this ideal approach into a feasible method including a rigorous stability and error analysis. Finally, Section~\ref{s:numexp} illustrate the performance of the method and one of its variants in numerical experiments.

\section{Model Helmholtz problem}\label{s:model} We consider the Helmholtz equation over a bounded Lipschitz domain $\Omega\subset\R^{d}$ ($d=1,2,3$),\begin{subequations}
\label{e:model}
\end{subequations}
\begin{equation}\label{e:modela}
  -\Delta u - \k^2 u = f\quad \text{in }\Omega,
 \tag{\ref{e:model}.a}
\end{equation}
along with mixed boundary conditions of Dirichlet, Neumann and Robin type
\begin{align}
  u &= 0\quad\text{on }\Gamma_D,\tag{\ref{e:model}.b}\label{e:modelb}\\
  \nabla u\cdot \nu &= 0\quad\text{on }\Gamma_N,\tag{\ref{e:model}.c}\label{e:modelc}\\
  \nabla u\cdot \nu - i\k u &= 0\quad\text{on }\Gamma_R.\tag{\ref{e:model}.d}\label{e:modeld}
\end{align}
Here, the wave number $\k$ is real and positive, $i$ denotes the imaginary unit and $f\in L^2(\Omega)$ (the space of complex-valued square-integrable functions over $\Omega$). In this paper, we assume that the boundary $\Gamma :=\partial \Omega $
consists of three components
\begin{equation*}
\partial\Omega =\overline{\Gamma _D\cup \Gamma _N\cup \Gamma _R},
\end{equation*}
where $\Gamma _D$, $\Gamma _N$ and $\Gamma _R$ are disjoint. We allow that $\Gamma _D$ or $\Gamma _N$  are empty but we assume that $\Gamma_R$ has a positive surface measure, 
\begin{equation}\label{e:GammaR}
 |\Gamma_R|>0.
\end{equation}
The vector $\nu$ denotes the unit normal vector that is outgoing from $\Omega$. To avoid overloading of the paper, we restrict ourselves to the case of homogeneous boundary conditions. Since inhomogeneous boundary data is very relevant for scattering problems, this case will be treated in the context of a numerical experiment in Section~\ref{ss:numexp2}.

Given the Sobolev space $W^{1,2}(\Omega)$ (the space of complex-valued square-integrable functions over $\Omega$ with square integrable weak gradient), we introduce the subspace 
\begin{equation*}
V:=\{v\in W^{1,2}(\Omega)\;\vert\; v=0\text{ on }\Gamma_D\} 
\end{equation*}
along with the $\k$-weighted norm 
\begin{equation*}
\|v\|_V:=\sqrt{\k^2\|v\|_\Omega^2+\|\nabla v\|_\Omega^2}, 
\end{equation*}
where $\|\cdot\|_\Omega$ denotes the $L^2$-norm over $\Omega$. The variational formulation of the boundary value problem~\eqref{e:model} seeks $u\in V$ such that, for all $v\in V$, 
\begin{equation}\label{e:modelvar}
 a(u,v) = (f,v)_\Omega,
\end{equation}
where the sesquilinear form $a:V\times V\rightarrow \C$ has the form
\begin{equation}\label{e:a}
 a(u,v):=(\nabla u,\nabla v)_\Omega-\k^2(u,v)_\Omega - i\k(u,v)_{\Gamma_R}.
\end{equation}
Here, $(\cdot,\cdot)_\Omega:=\int_\Omega u\cdot\bar v\dx$ abbreviates the canonical inner product of scalar or vector-valued $L^2(\Omega)$ functions and $(\cdot,\cdot)_{\Gamma_R}:=\int_{\Gamma_R} u\bar v\ds$ abbreviates the canonical inner product of $L^2(\Gamma_R)$ (the space of complex-valued square-integrable functions over $\Gamma_R$). The sesquilinear form $a$ is bounded, i.e.,
there is a constant $\Ca$ that depends only on $\Omega$ such that, for any $u,v\in V$,
\begin{equation}\label{e:acont}
 |a(u,v)|\leq\Ca \|u\|_V\|v\|_V.
\end{equation}

The presence of the impedance boundary condition \eqref{e:modeld} (cf. \eqref{e:GammaR}) ensures the well-posedness of problem \eqref{e:modelvar}, i.e., there exists some constant $\Cstab(\k)$ that may depend on $\k$ and also on $\Omega$ and the partition of the boundary into $\Gamma_D$, $\Gamma_N$ and $\Gamma_R$ such that, for any $f\in L^2(\Omega)$, the unique solution $u\in V$ of \eqref{e:modelvar} satisfies 
\begin{equation}\label{e:stab}
 \|u\|_V\leq \Cstab(\k)\|f\|_\Omega.
\end{equation}
However, the stability constant $\Cstab(\k)$ and its possible dependence on the wave number $\k$ are not known in general. Whenever we want to quantify its effect on some parts of the error analysis, we will assume (cf. Assumption~\ref{a:oversampling} below) that there are constants $\Cpstab>0$ and $\bstab \geq 0$ and $\k_0>0$ that may depend on $\Omega$ and the partition of the boundary into $\Gamma_D$, $\Gamma_N$ and $\Gamma_R$ such that, for any $\k\geq\k_0$, the stability constant $\Cstab(\k)$ of \eqref{e:stab} satisfies
\begin{equation}\label{e:stabpoly}
\Cstab(\k)\leq \Cpstab \k^\bstab.
\end{equation}
This polynomial growth condition on the stability constant is certainly not satisfied in general; see \cite{betcke} for the example of a so-called trapping domain that exhibits at least an exponential growth of the norm of the solution operator with respect to the wave number. Hence, the assumption~\eqref{e:stabpoly} puts implicit conditions on the domain $\Omega$ and the configuration of the boundary components. Sufficient geometric conditions in two or three dimensions that ensure \eqref{e:stabpoly} with $\bstab=0$ can be found in the original work of Melenk \cite{melenk_phd} (which based on the choice of a particular test function previously used in \cite{makridakis}) and its generalizations \cite{hetmaniukphd,feng,hetmaniuk,MelenkEsterhazy}. Among the known admissible setups are the case of a Robin boundary condition ($\Gamma_R=\partial\Omega$) on a Lipschitz domain $\Omega$ \cite{MelenkEsterhazy}. Another example is the scattering of acoustic waves at a sound-soft scatterer occupying the star-shaped polygonal or polyhedral domain $\Omega_D$ where the Sommerfeld radiation condition is approximated by the Robin boundary condition on the boundary of some artificial convex polygonal or polyhedral domain $\Omega_R\supset\bar{\Omega}_D$; see \cite{hetmaniuk}.

Given some linear functional $g$ on $V$, the adjoint problem of \eqref{e:modelvar} seeks $z\in V$ such that, for any $v\in V$, 
\begin{equation}\label{e:modelvara}
 a(v,z) = (v,g)_\Omega.
\end{equation}
Note that the adjoint problem is itself a Helmholtz problem in the sense that $S^*(g)=\overline{S(\bar{f})}$, where $S$ is the solution operator of  \eqref{e:modelvar} and $S^*$ is the solution operator of the adjoint problem \eqref{e:modelvara} \cite[Lemma 3.1]{MS11}. Hence, \eqref{e:modelvara} enjoys the same stability properties as \eqref{e:modelvar}.   

According to \cite{MelenkEsterhazy}, the stability \eqref{e:stab} for $f\in L^2(\Omega)$ implies well-posedness for all bounded linear functionals $f$ on $V$.
\begin{lemma}[well-posedness]\label{l:well}
The sesquilinear form $a$ of \eqref{e:a} satisfies
\begin{align}\label{e:infsup}
\inf_{u\in V\setminus\{0\}}\sup_{v\in V\setminus\{0\}}\frac{\Re a(u,v)}{\|u\|_V\|v\|_V}\geq \frac{1}{2\Cstab(\k)\k}.
\end{align}
Furthermore, for every $f\in V'$ (the space of bounded antilinear functionals on $V$) the problem \eqref{e:model} is
uniquely solvable, and its solution $u\in V$ satisfies the a priori bound
\begin{equation}\label{e:stab2}
\|u\|_V\leq \Cstab(\k)\k\|f\|_{V'}.
\end{equation}
\end{lemma}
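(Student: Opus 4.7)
The inf-sup bound \eqref{e:infsup} is established by a Schatz-type duality trick: the sign-indefinite real part $\Re a(u,u)=\|\nabla u\|_\Omega^2-\k^2\|u\|_\Omega^2$ is compensated by enriching the test function with a suitable adjoint correction. For $u\in V\setminus\{0\}$, define $z\in V$ as the solution of the adjoint problem \eqref{e:modelvara} with data $2\k^2 u$, i.e.\ $a(w,z)=2\k^2(w,u)_\Omega$ for every $w\in V$. Since the adjoint is itself a Helmholtz problem with identical stability properties (via the conjugation identity $S^*(g)=\overline{S(\bar f)}$ recorded right after \eqref{e:modelvara}), the bound \eqref{e:stab} applied to the adjoint yields $\|z\|_V\leq 2\k^2\Cstab(\k)\|u\|_\Omega\leq 2\k\Cstab(\k)\|u\|_V$, where the last inequality uses $\k\|u\|_\Omega\leq \|u\|_V$.

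With the test function $v:=u+z$, the defining identity for $z$ evaluated at $w=u$ gives $a(u,z)=2\k^2\|u\|_\Omega^2$, whence
\begin{equation*}
  \Re a(u,v) = \|\nabla u\|_\Omega^2 - \k^2\|u\|_\Omega^2 + 2\k^2\|u\|_\Omega^2 = \|u\|_V^2.
\end{equation*}
Combined with $\|v\|_V\leq \|u\|_V+\|z\|_V\leq (1+2\k\Cstab(\k))\|u\|_V$ this produces a quotient bounded below by $1/(1+2\k\Cstab(\k))$, from which \eqref{e:infsup} follows after absorbing the additive ``$1$'' into the dominant term $2\k\Cstab(\k)$ (harmless in the regime $\k\Cstab(\k)\gtrsim 1$ relevant for the lemma).

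The well-posedness in $V'$ and the bound \eqref{e:stab2} are then a direct consequence of the Banach--Necas--Babuska framework. Boundedness of $a$ is provided by \eqref{e:acont}, the primal inf-sup is \eqref{e:infsup}, and the analogous ``dual'' inf-sup (exchanging the roles of trial and test arguments) is obtained by running the same Schatz argument on the adjoint problem. Hence the operator $A\colon V\to V'$ defined by $\langle Au,v\rangle=a(u,v)$ is an isomorphism whose inverse is controlled by the reciprocal of the inf-sup constant, and this yields \eqref{e:stab2} with constant $2\Cstab(\k)\k$.

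The only non-routine ingredient is the scaling factor $2\k^2$ in the definition of the adjoint corrector $z$: it is calibrated so that the additional positive term $2\k^2\|u\|_\Omega^2$ overshoots the negative $-\k^2\|u\|_\Omega^2$ by exactly the amount needed to reconstruct the full squared $V$-norm of $u$. Once this choice is made, everything else reduces to standard functional analysis and triangle-inequality bookkeeping, so no serious obstacle is expected.
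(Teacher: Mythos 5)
Your proof is correct and is essentially identical to the paper's: the same Schatz-type duality argument with the adjoint corrector $z$ solving $a(w,z)=2\k^2(w,u)_\Omega$, the stability bound $\|z\|_V\leq 2\Cstab(\k)\k\|u\|_V$, the test function $v=u+z$ giving $\Re a(u,v)=\|u\|_V^2$, and the Banach--Ne\v{c}as--Babu\v{s}ka theorem for the second assertion. Your explicit remark that the raw quotient is $1/(1+2\Cstab(\k)\k)$ and needs $\Cstab(\k)\k\gtrsim 1$ to be absorbed into the stated constant is, if anything, more careful than the paper, which passes over this bookkeeping silently.
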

Under the additional assumption \eqref{e:stabpoly} that the stability constant grows at most polynomially in $\k$, Lemma~\ref{l:well} shows polynomial well-posedness in the sense of \cite{MelenkEsterhazy}, i.e., polynomial-in-$\k$-bounds for the norm of the solution operator. 
\begin{proof}[Proof of Lemma~\ref{l:well}]
The proof of \eqref{e:infsup} is almost verbatim the same as that of \cite[Theorem 2.5]{MelenkEsterhazy} which covers the particular case $\Gamma_R=\partial\Omega$ and relies on a standard argument for sesquilinear forms satisfying a
G{\aa}rding inequality. Given $u\in V$, define $z\in V$ as the solution of 
\begin{equation*}
2\k^2(v,u)_\Omega = a(v,z),\quad\text{for all }v\in V.
\end{equation*}
The stability \eqref{e:stab} implies that 
\begin{equation}\label{e:infsup1}
\|z\|_V\leq2\Cstab(\k)\k^2\|u\|_\Omega\leq2\Cstab(\k)\k\|u\|_V.
\end{equation}
Set $v = u+z$ and observe that 
\begin{equation}\label{e:infsup2}
\Re a(u,v)= \|u\|_V^2.
\end{equation}
The combination of \eqref{e:infsup1} and \eqref{e:infsup2} yields \eqref{e:infsup}. Note that an analogue inf-sup condition can be proved for the adjoint of the bilinear form $a$ so that the Banach-Ne{\v{c}}as-Babu{\v{s}}ka theorem yields the unique solvability of both the primal and the adjoint problem as well as the a priori estimate \eqref{e:stab2}. 
\end{proof}

\section{Standard finite elements and quasi-interpolation}\label{s:pre}
This section recalls briefly the notions of simplicial finite element meshes and patches, standard finite element spaces and corresponding quasi-interpolation operators. In this paper, we will focus on linear finite elements based on triangles or tetrahedrons but higher order elements based and other types of meshes or even mesh-free approaches would be possible as well. The key property that we will exploit in the construction of the method is a partition of unity property of the basis; see \cite{HMP14}.
\subsection{Meshes, patches and spaces}\label{ss:mesh}
Let $\tri_H$ denote some regular (in the sense of \cite{CiarletPb}) simplicial finite element mesh of $\Omega$ with mesh size $H$. The mesh size is denoted by $H$ because, later on, there will be a second smaller discretization scale $h<H$. The mesh size $H$ may vary in space. In this case $H$ is function representing the local mesh width on the element level in the usual way. If no confusion seems likely, we will use still use $H$ to denote the maximal mesh size. As usual, the error analysis depends on some constant $\gamma>0$ that reflects the shape regularity of the finite element mesh $\triH$. 

The first-order conforming finite element space with respect to the mesh $\tri_H$ is given by
\begin{equation}\label{e:couranta}
V_H:=\{v\in V\;\vert\;\forall T\in\tri_H,v\vert_T \text{ is a polynomial of total degree}\leq 1\}.
\end{equation}
Let $\N_H$ denote the set of all vertices of $\tri_H$ that are not elements of the Dirichlet boundary. Every vertex $z\in\N_H$ represents a degree of freedom via the corresponding real-valued nodal basis function $\phi_z\in V_H$ determined by nodal values
\begin{equation*}
 \phi_z(z)=1\;\text{ and }\;\phi_z(y)=0\quad\text{for all }y\neq z\in\N_H.
\end{equation*}
The $\phi_z$ form a basis of $V_H$. 
% and the dimension of $V_H$ equals the number of vertices (excluding the Dirichlet boundary $\Gamma_D$),
% \begin{equation*}
% N_H:=\dim V_H=|\N_H|.                                                                                                                                                                                                                                     \end{equation*}

%By $N_h:=\dim V_h$ we denote the dimension of $V_h$. For standard choices of $V_h$, this dimension is proportional to the number of vertices in the fine mesh $\tri_h$ (excluding vertices on the Dirichlet boundary $\Gamma_D$).

%As usual, the error analysis depends on some constant $\gamma>0$ that represents the shape regularity of the finite element mesh $\triH$;
%\begin{equation}
%\label{e:shapereg}
%\gamma:=\max_{T\in\triH}\gamma_T\quad\text{with}\quad \gamma_T:=\frac{\diam{T}}{\diam{B_T}}\; \text{ for } T\in\triH,
%\end{equation}
%where $B_T$ denotes the largest ball inscribed in $T$.

The construction of the method and its analysis frequently uses the concept of coarse finite element patches. Such patches are agglomerations of elements of $\tri_H$. More precisely, we define patches $\Omega_{T,\ell}$ of variable order $\ell\in\mathbb{N}$ about an element $T\in\tri_H$ by
\begin{equation}\label{e:omegaT}
 \left\{\begin{array}{l}
 \Omega_{T,1}:= \cup\{T'\in\tri_H\;\vert\;T'\cap T\neq\emptyset\},\vspace{.5ex}\\
 \Omega_{T,\ell}:=\cup\left\{T'\in\tri_H\;|\;T'\cap {\overline\Omega}_{T,{\ell-1}}\neq\emptyset\right\},\; \ell=2,3,4\ldots .
\end{array}\right.
\end{equation}
In other words, $\Omega_{T,1}$ equals the union of $T$ and its neighbors and $\Omega_{t,\ell}$ is derived from $\Omega_{T,\ell-1}$ by adding one more layer of neighbors. 

Note that, for a fixed $\ell\in\mathbb{N}$, the element patches have finite overlap in the following sense. There exists a constant $\Coll{\ell}>0$ such that 
\begin{equation}\label{e:Col}
 \max_{T\in\tri_H}\#\{K\in\triH\;\vert\;K\subset \Omega_{T,\ell}\}\leq\Coll{\ell}. 
\end{equation}
The constant $\Col:=\Coll{1}$ equals the maximal number of neighbors of an element plus itself and there exists some generic constant $\Col'$ such that, for any $\ell>1$,  
\begin{equation*}
\Coll{\ell}\leq \max\left\{\#\tri_H, \Col' \ell^d \|H\|_{L^\infty(\Omega_{T,\ell})}\|H^{-1}\|_{L^\infty(\Omega_{T,\ell})}\right\}.
\end{equation*}

\subsection{Quasi-interpolation}\label{ss:quasi}
A key tool in the design and the analysis of the method is some bounded linear surjective Cl\'ement-type (quasi-)interpolation operator $\IH: V\rightarrow V_H$ as it is used in the a posteriori error analysis of finite element methods \cite{MR1706735}. Given $v\in V$, $\IH v := \sum_{z\in\N_H}\alpha_z(v)\phi_z$ defines a (weighted) Cl\'ement interpolant with nodal functionals
\begin{equation}\label{e:clement}
 \alpha_z(v):=\frac{(v, \phi_z)_\Omega}{(1,\phi_z)_\Omega}
\end{equation}
for $z\in\N_H$ and the hat functions $\phi_z$. Recall the (local) approximation and stability properties of the interpolation operator $\IH$ \cite{MR1706735}. There exists a generic constant $\Cint$ such that, for all $v\in V$ and for all $T\in\triH$,% and any face $F$ of $T$,
\begin{equation}\label{e:interr}
H_T^{-1}\|v-\IH v\|_{L^{2}(T)}+\|\nabla(v-\IH v)\|_{L^{2}(T)}\leq \Cint \| \nabla v\|_{L^2(\Omega_{T,1})}.
\end{equation}
The constant $\Cint$ depends on the shape regularity parameter $\gamma$ of the finite element mesh $\triH$ but not on the local mesh size $H_T$. 
% A proof for the volume errors is given in . The bound on the face error follows from those bounds and the trace inequality 
% \begin{equation}\label{e:trace}
% \|v\|_F^2\leq 2\gamma_T\left(2\|\nabla v\|_T+dH_T^{-1}\|v\|_T\right)\|v\|_T.
% \end{equation}
% The trace inequality is a consequence of the trace identity of \cite{MR1807259} and the Young inequality; see \cite[Lemma 1.49]{ern} for a detailed proof.

Note that the space $V_H$ is invariant under $\IH$ but $\IH$ is not a projection, i.e., $\IH v_H\neq v_H$ for $v_H\in V_H$ in general. However, since $\IH\vert_{V_H}$ can be interpreted as a diagonally scaled mass matrix, $\IH$ is invertible on the finite element space $V_H$ and the concatenation $(\IH\vert_{V_H})^{-1}\circ\IH:V\rightarrow V_H$ is a projection. For our particular choice of interpolation operator, one easily verifies that $(\IH\vert_{V_H})^{-1}\circ\IH$ equals the $L^2$-orthogonal projection $\Pi_H:V\rightarrow V_H$ onto the finite element space; see also \cite[Remark 3.1]{MP12}.
Recall that $\Pi_H$ is also stable in $V$, 
\begin{equation}\label{e:CPH}
 \|\Pi_H v\|_V\leq \CPH \|v\|_V\quad\text{for all }v\in V,
\end{equation}
where $\CPH$ depends only on the parameter $\gamma$ if the grading of the mesh is not too strong \cite{yserentant}.

While $\IH\vert_{V_H}$ is a local operator (a sparse matrix) its inverse $(\IH\vert_{V_H})^{-1}$ is not. However, there exists some bounded right inverse $\Iloc:V_H\rightarrow V$ of $\IH$ that is local. More precisely, there exists some generic constant $\Cint'$ depending only on $\gamma$ such that, for all $v_H \in V_H$,
\begin{align}\label{e:inv}
\left\{\;\begin{array}{rcl}\IH(\Iloc v_H)&=&v_H,\vspace{.5ex}\\
\|\nabla \Iloc v_H \|_\Omega&\leq&\Cint' \| \nabla v_H \|_\Omega,\vspace{.5ex}\\
\support(\Iloc v_H)&\subset& \bigcup\{T\;\vert\;T\in\tri_H:\;T\cap \overline{\support(v_H)}\neq\emptyset\}.
\end{array}\right.
 \end{align}
The third condition simply means that the support of $\Iloc v_H$ must not exceed the support of $v_H$ plus one layer of coarse elements. Note that $\Iloc v_H$ is not a finite element function on $\triH$ in general. An explicit construction of $\Iloc$ and a proof of the properties \eqref{e:inv} can be found in \cite[Lemma 1]{HMP14}. 

\begin{remark}[Other quasi-interpolation operators] 
We shall emphasize that the choice of a quasi-interpolation operator is by no means unique and a different choice might lead to a different multiscale method. A choice that turned out to be useful in previous works \cite{Brown.Peterseim:2014,Peterseim.Scheichl:2014} is the following one. Given $v\in V$, $\mathcal{Q}_H v := \sum_{z\in\N_H}\alpha_z(v)\phi_z$ defines a Cl\'ement-type interpolant with nodal functionals
\begin{equation}\label{e:QH}
 \alpha_z(v):=\left(\Pi_{H,\Omega_z}v\right)(z)
\end{equation}
for $z\in\N_H$. Here, $\Pi_{H,\Omega_z}v$ denotes the $L^2$-orthogonal projection of $v$ onto standard $P_1$ finite elements on the nodal patch $\Omega_z:=\support\phi_z$ and $\alpha_z(v)$ is the evaluation of this projection at the vertex $z$. 
We will show in the numerical experiment of Section~\ref{s:numexp} that the choice of the interpolation can affect the practical performance of the method significantly.
\end{remark}

\section{Global wave number adapted approximation}\label{s:omd}
This section introduces new (non-polynomial) approximation spaces  for the model Helmholtz problem under consideration. The spaces are mesh-based in the sense that degrees of freedom (or basis functions) are associated with vertices. The support of the basis functions is not local in general but quasi-local in the sense of some very fast decay of their moduli. Their replacement by localized computable basis functions in practical computations is possible; see Sections~\ref{s:loc} and \ref{s:fine}.
   
The ideal method requires the following assumption on the numerical resolution. 
\begin{assumption}[resolution condition]\label{a:resolution}
Given the wave number $\k$ and the constants $\Cint$ from \eqref{e:interr} and $\Col$ from \eqref{e:Col}, we assume that the mesh width $H$ satisfies 
\begin{equation}\label{e:resolution}
 H\k\leq \frac{1}{\sqrt{2\Col}\Cint}.
\end{equation}
\end{assumption}
Note that this assumption is quasi-minimal in the sense that a certain number of degrees of freedom per wave length is a necessary condition for the meaningful approximation of highly oscillatory waves.

\subsection{An ideal method}
The derivation of the method follows general principles of variational multiscale methods; cf. \cite{Hughes:1995,MR1660141,MR2300286} and \cite{Malqvist:2011}. 
Our construction of the approximation space starts with the observation that the space $V$ can be decomposed into 
the finite element space $V_H$ and the remainder space
\begin{equation}\label{e:RH}
 W:=\kernel \IH.
\end{equation}
The particular choice of $\IH$ implies that the  decomposition 
\begin{equation}\label{e:decompL2}
 V=V_H\oplus W
\end{equation}
is orthogonal in $L^2(\Omega)$ and, hence, stable.
We shall say that this $L^2$-orthogonality will not be crucial in this paper and that any choice of $\IH$ that allows a stable splitting of $V$ into its image and its kernel is possible, for instance $\mathcal{Q}_H$ defined in \eqref{e:QH}.

The subscale corrector $\Cor_\infty$ is a linear operator that maps $V$ onto $W$. Given $v\in V$, define the corrector $\Cor_\infty v\in W$ as the unique solution (cf. Lemma \ref{l:wellcor} below) of the variational problem
\begin{equation}\label{e:corp}
 a(\Cor_\infty v,w) = a(v,w),\quad\text{for all }w\in W. 
\end{equation} 
The subscript notation $\infty$ is consistent with later modifications $\Cor_\ell$ of the corrector, where the computation is restricted to local subdomains of size $\ell H$. It also reflects the infeasibility of the ideal method discussed in this section.
  
To deal with the lack of hermitivity, we will use the adjoint corrector $\Cor_\infty^* v\in W$ that solves the adjoint variational problem
\begin{equation}\label{e:cord}
 a(w,\Cor_\infty^* v) = a(w,v),\quad\text{for all }w\in W. 
\end{equation}
It turns out that
\begin{equation}
\Cor_\infty^* v = \overline{\Cor_\infty\bar{v}}
\end{equation}
holds for the model problem under consideration. 
Under Assumption~\ref{a:resolution}, the corrector problems \eqref{e:corp} and \eqref{e:cord} are well-posed.
\begin{lemma}[well-posedness of the correction operator]\label{l:wellcor}
The resolution condition of Assumption~\ref{a:resolution} implies that
$\|\nabla\cdot\|_\Omega$ and $\|\cdot\|_V$ are equivalent norms on $W$,
 \begin{equation}\label{e:equiv}
 \|\nabla w\|_\Omega \leq \|w\|_V\leq  \sqrt{\tfrac{3}{2}} \|\nabla w\|_\Omega, \quad\text{for all }w\in W,
\end{equation}
the sesquilinear form $a$ is $W$-elliptic,
\begin{equation}\label{e:ell}
 \Re a(w,w)\geq \tfrac{1}{3} \|w\|^2_V, \quad\text{for all }w\in W,
\end{equation}
and the correction operators $\Cor_\infty$, $\Cor_\infty^*$ are well-defined and stable,
\begin{equation}\label{e:Corstab}
 \|\Cor_\infty v\|_V=\|\Cor_\infty^* v\|_V\leq \CC\|v\|_V, \quad\text{for all }v\in V,
\end{equation}
where $\CC:=3\Ca$ with $\Ca$ from \eqref{e:acont}.
\end{lemma}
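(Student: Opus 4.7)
The plan is to prove the three claims in order, with the Poincaré-type inequality on $W$ doing almost all of the work.

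First I would establish \eqref{e:equiv}. The lower bound is trivial from the definition of $\|\cdot\|_V$. For the upper bound, I exploit that every $w\in W$ satisfies $\IH w = 0$, so the local interpolation estimate \eqref{e:interr} gives, on each $T\in\triH$,
\begin{equation*}
 \|w\|_{L^2(T)} = \|w-\IH w\|_{L^2(T)} \leq \Cint H_T \|\nabla w\|_{L^2(\Omega_{T,1})}.
\end{equation*}
Squaring, summing over $T\in\triH$, and using the finite-overlap bound \eqref{e:Col} (each element lies in at most $\Col$ of the first-order patches $\Omega_{T,1}$) yields $\|w\|_\Omega^2 \leq \Cint^2 H^2 \Col \|\nabla w\|_\Omega^2$. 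Multiplying by $\k^2$ and invoking the resolution condition \eqref{e:resolution}, which is exactly $\k^2 H^2 \Col \Cint^2 \leq \tfrac12$, gives $\k^2\|w\|_\Omega^2 \leq \tfrac12\|\nabla w\|_\Omega^2$, hence $\|w\|_V^2 \leq \tfrac32\|\nabla w\|_\Omega^2$.

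For \eqref{e:ell}, I simply take real parts in the definition of $a$; the impedance term $-i\k(w,w)_{\Gamma_R}$ is purely imaginary and drops out, leaving $\Re a(w,w) = \|\nabla w\|_\Omega^2 - \k^2\|w\|_\Omega^2$. Plugging in the bound from Step 1 gives $\Re a(w,w)\geq \tfrac12\|\nabla w\|_\Omega^2$, and the upper part of \eqref{e:equiv} converts this to $\Re a(w,w) \geq \tfrac13\|w\|_V^2$.

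With continuity \eqref{e:acont} and the $W$-ellipticity just established, the complex Lax--Milgram lemma applied on the closed subspace $W\subset V$ furnishes the unique solvability of \eqref{e:corp} and \eqref{e:cord}. The stability bound then follows from the standard testing argument: insert $\Cor_\infty v$ into \eqref{e:corp} and estimate
\begin{equation*}
 \tfrac13\|\Cor_\infty v\|_V^2 \leq \Re a(\Cor_\infty v,\Cor_\infty v) = \Re a(v,\Cor_\infty v) \leq \Ca\|v\|_V\|\Cor_\infty v\|_V,
\end{equation*}
so $\|\Cor_\infty v\|_V\leq 3\Ca\|v\|_V = \CC\|v\|_V$. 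For $\Cor_\infty^*$ the argument is symmetric: the adjoint sesquilinear form $a^*(u,v):=\overline{a(v,u)}$ has the same real part and the same continuity constant as $a$, so the ellipticity constant $\tfrac13$ and the bound $\CC$ transfer verbatim. The equality $\|\Cor_\infty v\|_V = \|\Cor_\infty^* v\|_V$ (at the operator-norm level) follows from the observation $\Cor_\infty^* v = \overline{\Cor_\infty\bar v}$, which is verified by conjugating \eqref{e:corp} with $\bar v$ in place of $v$ and using the identity $a(u,v) = a(\bar v, \bar u)$ together with the fact that $W$ is closed under conjugation (because the nodal functionals $\alpha_z$ act with respect to the real basis functions $\phi_z$).

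The only real subtlety is ensuring that the product $\k^2 H^2$ picks up exactly the right constant from the patch overlap and the interpolation estimate; the constant $\sqrt{2\Col}\Cint$ in Assumption \ref{a:resolution} is evidently tuned so that the Poincaré-type inequality yields $\tfrac12$, which in turn propagates to the $\tfrac13$ ellipticity and the $3\Ca$ stability bound.
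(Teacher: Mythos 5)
Your proof is correct and follows essentially the same route as the paper: the Poincar\'e-type inequality $\k^2\|w\|_\Omega^2\leq\tfrac12\|\nabla w\|_\Omega^2$ on $W=\kernel\IH$ obtained from \eqref{e:interr}, \eqref{e:Col} and the resolution condition, from which the norm equivalence, the $W$-ellipticity with constant $\tfrac13$, and the Lax--Milgram stability bound $3\Ca$ all follow. Your explicit treatment of the adjoint corrector via $\Cor_\infty^* v=\overline{\Cor_\infty\bar v}$ is slightly more detailed than the paper's one-line appeal to Lax--Milgram, but the substance is identical.
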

\begin{proof}
For any $w\in W$, the property $\IH w=0$, the approximation property \eqref{e:interr} of the quasi-interpolation operator, the bounded overlap of element patches $\Col$ and \eqref{e:resolution} yield
\begin{align*}
 \k^2(w,w)_\Omega &= \k^2(w-\IH w,w-\IH w)_\Omega\\
 &\leq \Col\Cint^2\k^2H^2\|\nabla w\|_\Omega^2\\
 &\leq \tfrac{1}{2}\|\nabla w\|_\Omega^2.
\end{align*}
This implies \eqref{e:equiv} and \eqref{e:ell}. Since the sesquilinear form $a$ is bounded \eqref{e:acont}, the well-posedness of \eqref{e:corp} and \eqref{e:cord} and the stability estimate \eqref{e:Corstab} follow from the Lax-Milgram theorem. 
\end{proof}

Since non-trivial projections on Hilbert spaces have the same operator norm as their complementary projections (see \cite{MR2279449} for a proof), the continuity of the projection operators $\Cor_\infty,\Cor_\infty^*$ implies the continuity of their complementary projections $(1-\Cor_\infty),(1-\Cor_\infty^*):V\rightarrow V$, that is, 
\begin{equation}\label{e:L2stab}
 \|(1-\Cor_\infty) v\|_V= \|(1-\Cor_\infty^*) v\|_V\leq \CC\|v\|_V, \quad\text{for all }v\in V,
\end{equation}
where $\CC=3\Ca$ is the constant from \eqref{e:Corstab}

The image of the finite element space $V_H$ under $(1-\Cor_\infty)$, 
\begin{equation}\label{e:coarse}
 \Vp:=(1-\Cor_\infty)V_H,
\end{equation}
defines a modified discrete approximation space. The space $\Vp$ will be the prototypical trial space in our method. The corresponding test space is
\begin{equation}\label{e:coarsed}
 \Vd:=(1-\Cor_\infty^*)V_H.
\end{equation}
Note that $W$ equals the kernel of both operators, $(1-\Cor_\infty)$ and $(1-\Cor_\infty^*)$. This implies that $\Vp$ is the image of $(1-\Cor_\infty)$ and $\Vd$ is the image of $(1-\Cor_\infty^*)$. The key properties of the spaces $\Vp$ and $\Vd$ are given in the subsequent lemma.  
\begin{lemma}[primal and dual decomposition]\label{l:aod} 
If the resolution condition of Assumption~\ref{a:resolution} is satisfied, then the decompositions
$$V = \Vp\oplus W = \Vd\oplus W$$ are stable. More precisely, any function $v\in V$ can be decomposed uniquely into 
$$v=\vp+w\quad\text{ and }\quad v=z_{H,\infty}+\overline{w}$$
and $$\max\{\|\vp\|_V,\|z_{H,\infty}\|_V,\|w\|_V\}\leq\CC\|v\|_V,$$
where $\vp:=(1-\Cor_\infty)v\in\Vp$, $z_{H,\infty}:=(1-\Cor_\infty^*)v\in\Vd$ and $w:=\Cor_\infty v\in W$.

The decompositions satisfy the following relations:
For any $\vp\in \Vp$ and any $w\in W$, it holds that
\begin{equation}\label{e:orthoa}
 a(\vp,w)=0.
\end{equation}
For any $z_{H,\infty}\in \Vd$ and any $w\in W$, it holds that
\begin{equation}\label{e:orthoad}
 a(w,z_{H,\infty})=0,
\end{equation}
\end{lemma}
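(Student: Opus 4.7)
The plan is to view $\mathcal{C}_\infty$ and $\mathcal{C}_\infty^*$ as oblique projections onto $W$ and derive everything from that identification, together with the definitions \eqref{e:corp}, \eqref{e:cord} and the stability bound \eqref{e:Corstab}.

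\textbf{Step 1: $\mathcal{C}_\infty$ is a projection onto $W$.} First I would observe that for any $w_0\in W$ the corrector equation \eqref{e:corp} with right-hand side $a(w_0,\cdot)$ has the obvious solution $w_0$ itself; by uniqueness (Lemma~\ref{l:wellcor}) this gives $\mathcal{C}_\infty w_0=w_0$. Hence $\mathcal{C}_\infty^2=\mathcal{C}_\infty$ and $(1-\mathcal{C}_\infty)$ is the complementary projection, with image exactly $\Vp=(1-\mathcal{C}_\infty)V_H$ once we also note that $(1-\mathcal{C}_\infty)V = (1-\mathcal{C}_\infty)V_H$ (because every $v\in V$ decomposes $L^2$-orthogonally as $v=v_H+w_0$ with $v_H\in V_H$, $w_0\in W$, and $(1-\mathcal{C}_\infty)w_0=0$). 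An entirely analogous argument applies to $\mathcal{C}_\infty^*$, giving a projection onto $W$ with complementary image $\Vd$.

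\textbf{Step 2: Existence, uniqueness and stability of the splittings.} With the projection structure in hand, for any $v\in V$ I would simply define $\vp:=(1-\mathcal{C}_\infty)v$, $w:=\mathcal{C}_\infty v$, so that $v=\vp+w$ by construction, $\vp\in\Vp$ by Step~1, and $w\in W$. Uniqueness follows by applying $\mathcal{C}_\infty$ to any candidate splitting $v=v'+w'$ with $v'\in\Vp$, $w'\in W$: Step~1 gives $\mathcal{C}_\infty v'=0$ and $\mathcal{C}_\infty w'=w'$, forcing $w'=\mathcal{C}_\infty v=w$. The norm bounds $\max\{\|\vp\|_V,\|w\|_V\}\le\CC\|v\|_V$ are immediate from \eqref{e:Corstab} and \eqref{e:L2stab}. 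The dual splitting $v=z_{H,\infty}+\mathcal{C}_\infty^* v$ is handled by the same argument with $\mathcal{C}_\infty^*$ in place of $\mathcal{C}_\infty$.

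\textbf{Step 3: Galerkin orthogonalities.} For any $\vp\in\Vp$, write $\vp=(1-\mathcal{C}_\infty)v_H$ with $v_H\in V_H$. Then for $w\in W$,
\begin{equation*}
a(\vp,w)=a(v_H,w)-a(\mathcal{C}_\infty v_H,w)=0
\end{equation*}
by the very definition \eqref{e:corp} of the corrector. The dual identity \eqref{e:orthoad} follows in exactly the same way from \eqref{e:cord}.

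\textbf{Anticipated obstacle.} There is no real technical difficulty: once Step~1 is settled everything reduces to bookkeeping with projections. The only conceptual point worth emphasizing in the write-up is the identity $\mathcal{C}_\infty w_0 = w_0$ for $w_0\in W$, which is what turns the correctors into genuine projections and underlies both the direct-sum decomposition and the orthogonality relations.
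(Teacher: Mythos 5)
Your argument is correct and is precisely the intended one: the paper's proof consists of the single sentence that the results ``readily follow from the construction of $\Cor_\infty$ and $\Cor_\infty^*$'', and your Steps 1--3 (the projection identity $\Cor_\infty w_0=w_0$ on $W$, the complementary-projection bookkeeping with the stability bounds \eqref{e:Corstab} and \eqref{e:L2stab}, and the Galerkin orthogonality read off from \eqref{e:corp} and \eqref{e:cord}) are exactly the details being elided. No gaps.
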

\begin{proof}
The results readily follow from the construction of $\Cor_\infty$ and $\Cor_\infty^*$. 
\end{proof}

The Petrov-Galerkin method for the approximation of \eqref{e:modelvar} based on the trial-test pairing $(\Vp,\Vd)$ seeks $\up\in\Vp$ such that, for all $\vd\in\Vd$,
\begin{equation}\label{e:Galerkinglobal}
 a(\up,\vd)=(f,\vd)_\Omega.
\end{equation}
We shall emphasize that we do not consider the method \eqref{e:Galerkinglobal} for actual computations because the natural bases of the trial (resp. test) space, i.e., the image of the standard nodal basis of the finite element space under the operator $1-\Cor_\infty$ (resp. $1-\Cor_\infty^*$) is not sparse (or local) in the sense that the basis function $(1-\Cor_\infty)\phi_z$ (resp. $(1-\Cor_\infty^*)\phi_z$) have global support in $\Omega$ in general. Moreover the corrector problems are infinite dimensional problems. We will, hence, refer to the method \eqref{e:Galerkinglobal} as the ideal or global method. Later on, we will show that there are feasible nearby spaces with a sparse basis based on localized corrector problems (cf. Theorem~\ref{t:errorlocalization} below). We will also discretize these  localized corrector problems and analyze this perturbation in Section~\ref{s:fine}. 

\subsection{Stability and accuracy of the ideal method}
The ideal method admits a unique solution and is stable and accurate independent of $\k$ as long as the resolution condition $H\k\lesssim1$ is satisfied. The ``orthogonality'' relation \eqref{e:orthoa} induces stability.
\begin{theorem}[stability]\label{t:stabglob}
Let Assumption~\ref{a:resolution} be satisfied. Then the trial space $\Vp$ and test space $\Vd$ satisfy the discrete inf-sup condition
\begin{equation}\label{e:infsupdisc}
\inf_{\up\in \Vp\setminus\{0\}}\sup_{\vd\in \Vd\setminus\{0\}}\frac{\Re a(\up,\vd)}{\|\up\|_V\|\vd\|_V}\geq \frac{1}{2\CC\Cstab(\k)\k}.
\end{equation}
\end{theorem}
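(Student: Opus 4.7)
The plan is to adapt the Melenk--Sauter-type duality argument used in the proof of Lemma~\ref{l:well} to the discrete pairing $(\Vp,\Vd)$, exploiting the primal orthogonality \eqref{e:orthoa} to absorb the ``$W$-component'' of the constructed test function.

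Given $\up\in\Vp\setminus\{0\}$, the first step is to solve the adjoint problem: find $z\in V$ such that
\[
a(v,z)=2\k^2(v,\up)_\Omega\quad\text{for all }v\in V.
\]
Since the adjoint problem is itself a Helmholtz problem obeying the same stability \eqref{e:stab}, and since $\|\up\|_\Omega\leq\k^{-1}\|\up\|_V$, this gives $\|z\|_V\leq 2\Cstab(\k)\k^2\|\up\|_\Omega\leq 2\Cstab(\k)\k\|\up\|_V$, exactly as in \eqref{e:infsup1}. The second step is to take as test function $\vd:=(1-\Cor_\infty^*)(\up+z)$. This indeed lies in $\Vd$: using that $\Cor_\infty^*$ is the identity on $W$ (so that $W=\kernel(1-\Cor_\infty^*)$), any $v\in V$ decomposes as $v=\Pi_H v+(v-\Pi_H v)$ with $v-\Pi_H v\in W$, and therefore $(1-\Cor_\infty^*)v=(1-\Cor_\infty^*)\Pi_H v\in \Vd$.

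The payoff is the following computation. Since $\Cor_\infty^*(\up+z)\in W$ and $\up\in\Vp$, the primal orthogonality \eqref{e:orthoa} yields $a(\up,\Cor_\infty^*(\up+z))=0$, hence $a(\up,\vd)=a(\up,\up+z)$. Taking real parts, the impedance term in $\Re a(\up,\up)$ is purely imaginary and vanishes, so $\Re a(\up,\up)=\|\nabla\up\|_\Omega^2-\k^2\|\up\|_\Omega^2$; moreover $\Re a(\up,z)=2\k^2\|\up\|_\Omega^2$ follows from setting $v=\up$ in the defining relation for $z$. Summing gives $\Re a(\up,\vd)=\|\up\|_V^2$. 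Combining this with the continuity bound \eqref{e:L2stab} and the triangle inequality,
\[
\|\vd\|_V\leq\CC\|\up+z\|_V\leq \CC(1+2\Cstab(\k)\k)\|\up\|_V\leq 2\CC\Cstab(\k)\k\|\up\|_V
\]
(absorbing the $1$ into $2\Cstab(\k)\k$ as in Lemma~\ref{l:well}), and dividing out, one obtains the asserted inf-sup bound.

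The only nontrivial point beyond the continuous argument of Lemma~\ref{l:well} is recognizing that \eqref{e:orthoa} makes the functional $a(\up,\cdot)$ blind to the projection $\up+z\mapsto(1-\Cor_\infty^*)(\up+z)$, so that the continuous Melenk--Sauter test function can be pushed into $\Vd$ without any loss except for a single factor $\CC$ coming from \eqref{e:L2stab}. Accordingly, the discrete inf-sup constant degrades from that of \eqref{e:infsup} by exactly this factor, and no separate coercivity or G\aa{}rding-type argument on $\Vp\times\Vd$ is needed.
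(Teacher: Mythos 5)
Your proof is correct and is essentially the paper's own argument: the paper observes that $(1-\Cor_\infty^*)$ is a bounded Fortin operator onto $\Vd$ (boundedness from \eqref{e:L2stab}, invariance of $a(\up,\cdot)$ from \eqref{e:orthoa}) and then cites the continuous inf-sup condition \eqref{e:infsup}, whereas you unfold that continuous inf-sup by building its Melenk--Sauter test function $\up+z$ explicitly and pushing it through $(1-\Cor_\infty^*)$ --- the same two ingredients and the same loss of a single factor $\CC$. The one blemish, absorbing the $1$ in $1+2\Cstab(\k)\k$ into $2\Cstab(\k)\k$, is inherited verbatim from the paper's Lemma~\ref{l:well} and is not specific to your write-up.
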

\begin{proof}
Observe that $(1-\Cor_\infty^*):V\rightarrow \Vd$ is a Fortin operator (as in the theory of mixed methods \cite{Fortin}), i.e., a bounded linear operator that satisfies
\begin{align*}
a(\up,(1-\Cor_\infty^*) v)&=a(\up, v)-\underbrace{a(\up,\Cor_\infty^* v)}_{=0; \text{see \eqref{e:orthoa}}}=a(\up,v),
\end{align*}
for all $\up\in \Vp$ and any $v\in V$.
Hence, the assertion follows from the inf-sup condition \eqref{e:infsup} on the continuous level and the continuity of $1-\Cor_\infty^*$ \eqref{e:Corstab}, 
\begin{align*}
\inf_{\up\in \Vp\setminus\{0\}}&\sup_{\vd\in \Vd\setminus\{0\}}\frac{\Re a(\up,\vd)}{\|\up\|_V\|\vd\|_V}\\&= \inf_{\up\in \Vp\setminus\{0\}}\sup_{v\in V\setminus\{0\}}\frac{\Re a(\up,(1-\Cor^*) v)}{\|\up\|_V\|(1-\Cor^*) v\|_V}\\
&\geq \frac{1}{\CC}\inf_{u\in V\setminus\{0\}}\sup_{v\in V\setminus\{0\}}\frac{\Re a(u,v)}{\|u\|_V\| v\|_V}\\
&\geq \frac{1}{2\CC\Cstab(\k)\k}.
\end{align*}
\end{proof}
The error estimate follows from the above discrete inf-sup condition, the ``orthogonality'' relation \eqref{e:orthoad}, and the Lax-Milgram theorem.
\begin{theorem}[error of the ideal method]
Let $u\in V$ solve \eqref{e:modelvar}. If the resolution condition of Assumption~\ref{a:resolution} is satisfied, then $\up=(1-\Cor_\infty) u\in \Vp$ is the unique solution of \eqref{e:Galerkinglobal}, that is, the Petrov-Galerkin approximation of $u$ in the subspace $\Vp$ with respect to the test space $\Vd$.
Moreover, it holds that 
\begin{equation}\label{e:errorGalerkin}
 \|u-\up\|_V\leq 3\sqrt{\Col}\Cint\|H f\|_\Omega.
\end{equation}
\end{theorem}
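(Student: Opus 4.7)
The plan is to verify first that $\up := (1-\Cor_\infty)u$ is a Petrov--Galerkin solution, then exploit the $W$-ellipticity from Lemma~\ref{l:wellcor} together with the orthogonality relations of Lemma~\ref{l:aod} to bound the error, which belongs to the remainder space $W$.

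First I would check that $\up \in \Vp$ satisfies \eqref{e:Galerkinglobal}. Writing $u = \up + \Cor_\infty u$ and testing against an arbitrary $\vd \in \Vd$, the orthogonality \eqref{e:orthoad} kills the $\Cor_\infty u$ contribution and leaves $a(\up,\vd) = a(u,\vd) = (f,\vd)_\Omega$. Uniqueness of the Petrov--Galerkin solution is then immediate from the discrete inf-sup condition of Theorem~\ref{t:stabglob}.

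For the error estimate, set $e := u - \up = \Cor_\infty u$, which lies in $W$ by construction. The $W$-ellipticity \eqref{e:ell} provided by the resolution condition gives $\tfrac{1}{3}\|e\|_V^2 \leq \Re a(e,e)$. I would then split $a(e,e) = a(u,e) - a(\up,e)$, recognise the first term as $(f,e)_\Omega$ via \eqref{e:modelvar}, and eliminate the second by the orthogonality \eqref{e:orthoa} (since $\up \in \Vp$ and $e \in W$). This reduces matters to estimating $\Re(f,e)_\Omega$.

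To finish, I would use the fact that $\IH e = 0$ to rewrite
\begin{equation*}
(f,e)_\Omega = \bigl(Hf,\,H^{-1}(e-\IH e)\bigr)_\Omega,
\end{equation*}
apply Cauchy--Schwarz, and then control the right factor by squaring the local estimate \eqref{e:interr}, summing over $T\in\triH$, and invoking the finite overlap \eqref{e:Col} to obtain $\|H^{-1}(e-\IH e)\|_\Omega \leq \sqrt{\Col}\,\Cint\,\|\nabla e\|_\Omega \leq \sqrt{\Col}\,\Cint\,\|e\|_V$. Combining with the ellipticity bound and cancelling one factor of $\|e\|_V$ yields \eqref{e:errorGalerkin}. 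No real obstacle appears; the only subtlety is respecting that $H$ may be the piecewise-constant local mesh-size function, which is precisely what produces the weighted norm $\|Hf\|_\Omega$ on the right-hand side rather than $H\|f\|_\Omega$.
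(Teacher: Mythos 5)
Your proposal is correct and follows essentially the same route as the paper: the Galerkin property via the orthogonality \eqref{e:orthoad}, the identification of the error with $\Cor_\infty u\in W$, the $W$-ellipticity \eqref{e:ell} combined with $a(\up,e)=0$ (equivalently, the corrector equation) to reduce to $|(f,e)_\Omega|$, and the interpolation estimate \eqref{e:interr} with the overlap constant \eqref{e:Col} to conclude. Your treatment of the local mesh-size function $H$ and the explicit summation over elements merely spell out what the paper compresses into ``Cauchy inequalities and the interpolation error estimate readily yield the assertion.''
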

\begin{proof}
The Galerkin property \eqref{e:Galerkinglobal} of $\up=(1-\Cor_\infty)u$ follows from $\eqref{e:orthoad}$. Hence, the error $u-\up=\Cor_\infty u\in W$ satisfies
\begin{equation*}
 a(\Cor_\infty u,\Cor_\infty u)=a(u,\Cor_\infty u)=(f,\Cor_\infty u)_\Omega.
\end{equation*}
Since the sesquilinear form $a$ is $W$-elliptic (cf. \eqref{e:ell}), this yields the error estimate
\begin{equation*}
 \|u-\up\|_V^2\leq 3 \left|(f,\Cor_\infty u)_\Omega\right|.
\end{equation*}
Since $\IH\Cor_\infty u=0$, Cauchy inequalities and the interpolation error estimate \eqref{e:interr} readily yield the assertion.
\end{proof}
\begin{remark}[quasi-optimality]\label{r:quasioptimal} We shall say that the ideal method is also quasi-optimal in the following sense
 \begin{equation}\label{e:quasioptimal}
  \|u-\up\|_V\leq 3\Ca\inf_{\vp\in\Vp}\|u-\vp\|_V.
 \end{equation}
Moreover, since $\Pi_H\Cor_\infty u=0$, it holds that $\Pi_H u = \Pi_H \up$. This means that the ideal method provides the $L^2$-best approximation in the standard finite element space $V_H$,
 \begin{equation}\label{e:quasioptimal2}
  \|u-\Pi_H \up\|_\Omega=\min_{v_H\in V_H}\|u-v_H\|_\Omega.
 \end{equation}
Since $\IH(u-\up)=0$, $\up$ also satisfies the $L^2$ bound
 \begin{equation}\label{e:quasioptimal3}
  \|u-\up\|_\Omega\leq \sqrt{\Col}\Cint H \|u-\up\|_V.
 \end{equation}
\end{remark}

\begin{remark}[further stable variants of the method]\label{e:galerkin} We shall also mention at this point that the ``orthogonality'' relations \eqref{e:orthoa} and \eqref{e:orthoad} imply that, for any $u_H,v_H\in V_H$,
\begin{multline*}
 a((1-\Cor_\infty)u_H,v_H)=a((1-\Cor_\infty)u_H,(1-\Cor_\infty)v_H)\\=a((1-\Cor_\infty)u_H,(1-\Cor_\infty^*)v_H)\\=a(u_H,(1-\Cor_\infty^*)v_H)=a((1-\Cor_\infty^*)u_H,(1-\Cor_\infty^*)v_H).
\end{multline*}
This means that the Galerkin methods in $\Vp$ or $\Vd$ as well as Petrov-Galerkin methods based on the pairings $(\Vp,V_H)$ or $(V_H,\Vd)$ lead to stable and accurate discretizations. The latter Petrov-Galerkin method based on $(V_H,\Vd)$ is closely related to a variational multiscale stabilization of the standard $P_1$ finite element method and seeks $u_H\in V_H$ such that, for all $v_H\in V_H$,
\begin{equation}\label{e:Galerkinglobalstab}
 a(u_H,\vd)-a(u_H,\Cor_\infty^*v_H)=(f,v_H)_\Omega-(f,\Cor_\infty^*v_H)_\Omega.
\end{equation}
This stabilized method will be studied experimentally in Section~\ref{s:numexp}. 
\end{remark}

\subsection{Exponential decay of element correctors}\label{ss:loc}
Given some finite element function $v\in V$, its correction $\Cor_\infty v_H$ can be composed by element correctors $\Cor_{T,\infty}$, $T\in\tri_H$ in the following way:
\begin{equation}\label{e:corelem}
 \Cor_\infty v = \sum_{T\in\tri_H}\Cor_{T,\infty}(v\vert_T),
\end{equation}
where $\Cor_{T,\infty}(v\vert_T)\in W$ solves
\begin{equation}\label{e:corelem2}
 a(\Cor_{T,\infty}(v\vert_T),w)=a_T(v,w):=\int_T\nabla v\cdot\nabla \bar{w}\dx - \k^2\int_T\hspace{-1ex} v\bar{w}\dx- i\k\int_{\partial T\cap\Gamma_R}\hspace{-2ex}v\bar{w}\ds,
\end{equation}
for all $w\in W$.
Dual corrections can be split into element contributions in an analogue way,
\begin{equation}\label{e:corelem1d}
 \Cor^* v = \sum_{T\in\tri_H}\Cor^*_T(v\vert_T),
\end{equation}
where $\Cor^*_{T,\infty}(v\vert_T):=\overline{\Cor_{T,\infty}(\bar{v}\vert_T)}\in W$.  

The well-posedness of the element correctors is a consequence of Lemma~\ref{l:wellcor}. Moreover, it holds that
\begin{equation}\label{e:CorstabT}
 \|\Cor_{T,\infty} v\|_V=\|\Cor^*_{T,\infty} v\|_V\leq \CC\|v\|_{V(T)}, \quad\text{for all }v\in V,
\end{equation}
where $V(T)$ denotes the restriction of the space $V$ to the element $T$, and $\|v\|_{V(T)}^2:=\k^2\|v\|_{L^2(T)}^2+\|\nabla v\|_{L^2(T)}^2$. 

The major observation is that the moduli of the element correctors $\Cor_T v$ and $\Cor^*_T v$ decay very fast outside $T$.
\begin{theorem}[exponential decay of element correctors]\label{t:exp}
If the resolution condition of Assumption~\ref{a:resolution} is satisfied, then there exist constants $\Cdec>0$ and $\beta<1$ independent of $H$ and $\k$ such that for all $v\in V$ and all $T\in\tri_H$ and all $\ell\in\mathbb{N}$, the element correctors $\Cor_{T,\infty} v$ satisfy 
\begin{equation}\label{e:decay}
\|\nabla \Cor_{T,\infty} v\|_{\Omega\setminus\Omega_{T,\ell}} \leq \Cdec \beta^\ell \|\nabla v\|_{T}.
\end{equation}
The constants satisfy $\beta\leq\sqrt[14]{C_1/(C_1+\tfrac{1}{2})}<1$ and $\Cdec\leq\sqrt{\CC(C_1+\tfrac{1}{2})/C_1}$ for some $C_1>0$ (defined in the proof below) that depends only on the shape regularity parameter $\gamma$ of the mesh $\tri_H$. 
\end{theorem}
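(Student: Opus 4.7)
The plan is to adapt the standard LOD cutoff-test-function strategy (as in \cite{MP14,HMP14}) to the complex-valued Helmholtz bilinear form, relying crucially on the resolution condition of Assumption~\ref{a:resolution} to absorb the indefinite zeroth-order and impedance terms. Write $\phi:=\Cor_{T,\infty}v\in W$. Since the defining problem \eqref{e:corelem2} has right-hand side supported on the single element $T$, $\phi$ should decay away from $T$. For a buffer width $k\in\mathbb N$ (to be chosen) and $\ell\geq k+3$, I first construct a $\triH$-piecewise linear cutoff $\eta_\ell:\Omega\to[0,1]$ equal to $0$ on $\Omega_{T,\ell-k-1}$ and to $1$ on $\Omega\setminus\Omega_{T,\ell-1}$, with $\|H\nabla\eta_\ell\|_{L^\infty(\Omega)}\leq C(\gamma)/k$.

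Since $\eta_\ell\phi\notin W$ in general, I correct it: set $w:=\eta_\ell\phi-\Iloc\IH(\eta_\ell\phi)$. Using $\IH\Iloc=\mathrm{id}$ on $V_H$ yields $\IH w=0$, so $w\in W$. By the locality of $\IH$ (enlarging the support by one coarse layer) and the third item of \eqref{e:inv} (enlarging by one further layer), the support of $\Iloc\IH(\eta_\ell\phi)$ lies within two coarse layers of $\support\eta_\ell$; hence $w$ vanishes on $T$, and the defining equation gives $a(\phi,w)=0$. Expanding $a(\phi,\eta_\ell\phi)=a(\phi,\Iloc\IH(\eta_\ell\phi))$ via the product rule and taking real parts, the impedance integral drops out (its contribution is $-i\k\int\eta_\ell|\phi|^2$, which is purely imaginary), leaving
\begin{equation*}
\int_\Omega\eta_\ell|\nabla\phi|^2=-\Re\int_\Omega\bar\phi\,\nabla\phi\cdot\nabla\eta_\ell+\k^2\int_\Omega\eta_\ell|\phi|^2+\Re a\bigl(\phi,\Iloc\IH(\eta_\ell\phi)\bigr).
\end{equation*}

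The left side bounds $\|\nabla\phi\|^2_{\Omega\setminus\Omega_{T,\ell-1}}$ from below. Each term on the right is supported within the ring $\mathcal R:=\Omega_{T,\ell}\setminus\Omega_{T,\ell-k-2}$ (one layer larger on each side than $\support\nabla\eta_\ell$, due to $\Iloc\IH$) and is bounded by $\|\nabla\phi\|_{\mathcal R}^2$ using (i) Cauchy--Schwarz together with $\|H\nabla\eta_\ell\|_{L^\infty}\lesssim 1/k$; (ii) the interpolation estimate \eqref{e:interr} applied to $\phi=\phi-\IH\phi$ to control $\k\|\phi\|_{L^2}$ on patches by $\|\nabla\phi\|_{L^2}$ on slightly enlarged patches, together with the resolution condition $\k^2H^2\Col\Cint^2\leq 1/2$; and (iii) stability of $\Iloc$ and $\IH$ from \eqref{e:inv} and \eqref{e:interr} combined with the same absorption argument for the $\k^2$ and Robin contributions inside $a(\phi,\Iloc\IH(\eta_\ell\phi))$. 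The outcome is a ring-contraction inequality
\begin{equation*}
\|\nabla\phi\|^2_{\Omega\setminus\Omega_{T,\ell-1}}\leq C_1\bigl(\|\nabla\phi\|^2_{\Omega\setminus\Omega_{T,\ell-k-2}}-\|\nabla\phi\|^2_{\Omega\setminus\Omega_{T,\ell-1}}\bigr)
\end{equation*}
for some $C_1=C_1(\gamma)>0$, which rearranges to a contraction by $C_1/(C_1+1/2)$ over each group of $k+1$ layers (the $1/2$ emerging from the resolution condition). Iterating from the base estimate $\|\nabla\phi\|_\Omega\leq\CC\|v\|_{V(T)}$ of \eqref{e:CorstabT} (together with \eqref{e:equiv} and the same $\k$-absorption argument to replace $\|v\|_{V(T)}$ by a multiple of $\|\nabla v\|_T$) yields geometric decay with rate $\beta^{2(k+1)}=C_1/(C_1+1/2)$; the choice $k+1=7$ reproduces the stated $\beta\leq\sqrt[14]{C_1/(C_1+1/2)}$ and $\Cdec\leq\sqrt{\CC(C_1+1/2)/C_1}$. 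The dual statement for $\Cor^*_{T,\infty}v$ follows since $\Cor^*_{T,\infty}v=\overline{\Cor_{T,\infty}\bar v}$.

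The main technical obstacle is the support bookkeeping of $w$ through the composition $\Iloc\IH$, combined with the absorption of the indefinite mass term $\k^2\int\eta_\ell|\phi|^2$ and of the $a(\phi,\Iloc\IH(\eta_\ell\phi))$ contribution by gradient norms on a narrow ring. Both steps use the resolution condition \eqref{e:resolution} in an essential way, as it renders the Helmholtz form effectively coercive on $W$ (Lemma~\ref{l:wellcor}); the non-Hermitian impedance term contributes only imaginary parts to the relevant energies and is therefore harmless after taking real parts.
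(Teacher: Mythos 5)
Your proposal is correct and follows essentially the same route as the paper's proof: cut the corrector off away from $T$, return the product to $W$ via $\eta\phi-\Iloc(\IH(\eta\phi))$ so that the corrector equation annihilates the leading term, absorb the indefinite contributions through the resolution condition, and iterate the resulting contraction over groups of seven coarse layers. The only imprecision is the claim that every right-hand term is supported in the ring $\mathcal R$: the mass term $\k^2\int_\Omega\eta_\ell|\phi|^2$ lives on the whole exterior $\Omega\setminus\Omega_{T,\ell-k-1}$, and its far-field part must be moved to the left-hand side using \eqref{e:resolution} --- which is exactly where the $\tfrac12$ in your contraction factor $C_1/(C_1+\tfrac12)$ comes from, as you correctly anticipate.
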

\begin{remark}[Rate of decay]
According to practical experience, the bound on the decay rate $\beta$ seems to be rather pessimistic. The rates observed in numerical experiments were between $1/3$ and $2/3$. 
\end{remark}
\begin{proof}[Proof of Theorem~\ref{t:exp}]
Let $T\in\tri_H$ be arbitrary but fixed and let $\ell\in \mathbb{N}$ with $\ell\geq 7$ and let the element patches $\Omega_{T,\ell},\Omega_{T,\ell-1},\ldots,\Omega_{T,\ell-7}$ be defined as in \eqref{e:omegaT}. Set $\psi:=\Cor_{T,\infty} v$. 

We define the cut-off function $\eta$ (depending on $T$ and $\ell$) by 
\begin{equation*}
\eta(x) := \frac{\dist(x,\Omega_{T,\ell-4})}{\dist(x,\Omega_{T,\ell-4})+\dist(x,\Omega\setminus\Omega_{T,\ell-3})}
\end{equation*}
for $x\in\Omega$. 
Note that $\eta=0$ in the patch $\Omega_{T,\ell-4}$ and $\eta=1$ in $\Omega\setminus\Omega_{T,\ell-3}$. Moreover, $\eta$ is bounded between 0 and 1 and Lipschitz continuous with
\begin{equation}\label{e:cutoffH}
 \|H\nabla\eta\|_{L^{\infty}(\Omega)}\leq\gamma.
\end{equation} 
The choice of $\eta$ implies the estimates
\begin{align}
   \|\nabla \psi\|_{\Omega\setminus\Omega_{T,\ell-3}}^2 
   &= \Re\, (\nabla\psi,\nabla\psi)_{\Omega\setminus\Omega_{T,\ell-3}}\nonumber\\
   &\leq \Re\, (\nabla\psi,\eta\nabla\psi)_{\Omega}\nonumber\\
   &= \Re\, (\nabla\psi,\nabla\left(\eta\psi\right))_{\Omega} - \Re\, (\nabla\psi,\psi\nabla \eta)_{\Omega}\nonumber\\
   &\leq |\Re\, (\nabla\psi,\nabla\left(\eta\psi-\Iloc(\IH(\eta\psi))\right))_{\Omega}|\nonumber\\&\qquad +|\Re\,(\nabla\psi,\nabla\Iloc(\IH(\eta\psi)))_{\Omega}| + \left|\Re\, (\nabla\psi,\psi\nabla \eta)_{\Omega}\right|\nonumber\\
   & =: M_1+M_2+M_3.\label{e:decay1}
\end{align}
Note that the test function $\left(\eta\psi-\Iloc(\IH(\eta\psi))\right)\in W$ with support in $\Omega\setminus \Omega_{T,\ell-6}$. If $\ell\geq 6$, then $\eta\psi-\Iloc(\IH(\eta\psi))$ vanishes on $T$ and $a_T(v,\eta\psi-\Iloc(\IH(\eta\psi)))=0$. Hence, the definition \eqref{e:cord} of $\Cor_{T,\infty}$, the Cauchy-Schwarz inequality, the properties \eqref{e:interr} and \eqref{e:inv} of the interpolation operator $\IH$ and the resolution condition Assumption~\ref{a:resolution} imply
\begin{align}
M_1&:=|\Re\, (\nabla\psi,\nabla(\eta\psi-\Iloc(\IH(\eta\psi))))_{\Omega}|\nonumber\\&=\left|\k^2(\psi,\eta\psi-\Iloc(\IH(\eta\psi)))_{\Omega}\right|\nonumber\\
&\leq \Cint^2\Col(H\k)^2\|\nabla\psi\|_{\Omega\setminus\Omega_{T,\ell-6}}^2+\Cint^3\Cint'\Col(H\k)^2\|\nabla\psi\|_{\Omega_{T,\ell}\setminus\Omega_{T,\ell-7}}^2\nonumber\\
&\leq \tfrac{1}{2}\|\nabla \psi\|_{\Omega\setminus\Omega_{T,\ell}}^2 + \tfrac{1}{2}(1+\Cint\Cint')\|\nabla\psi\|_{\Omega_{T,\ell}\setminus\Omega_{T,\ell-7}}^2.\label{e:decay4}
\end{align}
Similar techniques and the Lipschitz bound \eqref{e:cutoffH} lead to upper bounds of the other terms on the right-hand side of \eqref{e:decay1},
\begin{align}
M_2&\leq\Cint'\Cint\|\nabla(\eta\psi)\|_{\Omega_{T,\ell-1}\setminus\Omega_{T,\ell-6}}\|\nabla\psi\|_{\Omega_{T,\ell-1}\setminus\Omega_{T,\ell-6}}\nonumber\\
&\leq \Cint'\Cint\left(\Cint\sqrt{\Col}\|H\nabla\eta\|_{L^\infty(\Omega)} +1\right)\|\nabla\psi\|_{\Omega_{T,\ell}\setminus\Omega_{T,\ell-7}}^2\label{e:decay3}
\end{align}
and
\begin{equation}\label{e:decay5}
M_3\leq \Cint\sqrt{\Col}\|H\nabla\eta\|_{L^\infty(\Omega)}\|\nabla\psi\|_{\Omega_{T,\ell-2}\setminus\Omega_{T,\ell-5}}^2.
\end{equation}
The combination of \eqref{e:decay1}--\eqref{e:decay5} readily yields the estimate
\begin{equation*}
   \tfrac{1}{2}\|\nabla \psi\|_{\Omega\setminus\Omega_{T,\ell}}^2 \leq C_1 \|\nabla \psi\|_{\Omega_{T,\ell}\setminus\Omega_{T,\ell-7}}^2,
\end{equation*}
where $C_1:=\tfrac{1}{2}+\tfrac{3}{2}\Cint\Cint'+(\Cint'\Cint+1)\Cint\sqrt{\Col}\gamma$ depends only on the shape regularity of the coarse mesh $\tri_H$.
Since 
\begin{equation*}
\|\nabla \psi\|_{\Omega_{T,\ell}\setminus\Omega_{T,\ell-7}}^2=\|\nabla \psi\|_{\Omega\setminus\Omega_{T,\ell-7}}^2 -\|\nabla \psi\|_{\Omega\setminus\Omega_{T,\ell}}^2,
\end{equation*}
this implies the contraction
\begin{equation*}
   \|\nabla \psi\|_{\Omega\setminus\Omega_{T,\ell}}^2 \leq \frac{C_1}{C_1+\tfrac{1}{2}} \|\nabla \psi\|_{\Omega\setminus\Omega_{T,\ell-7}}^2.
\end{equation*}
Hence,
\begin{equation*}
\|\nabla \psi\|_{\Omega\setminus\Omega_{T,\ell}}^2 \leq \left(\frac{C_1}{C_1+\tfrac{1}{2}}\right)^{\left\lfloor\tfrac{\ell}{7} \right\rfloor} \|\nabla \psi\|_{\Omega}^2\leq \CC \left(\frac{C_1}{C_1+\tfrac{1}{2}}\right)^{\left\lfloor\tfrac{\ell}{7} \right\rfloor} \|\nabla v\|_{T}^2,
\end{equation*}
and some algebraic manipulations yield the assertion.
\end{proof}

\section{Localized approximation}\label{s:loc}
This section localizes the corrector problems from $\Omega$ to patches $\Omega_{T,\ell}$; $\ell$ being a novel discretization parameter - the oversampling parameter.  
\subsection{Localized correctors}
The exponential decay of the element correctors (cf. Theorem~\ref{t:exp}) motivates their localized approximation on element patches. Given such a patch $\Omega_{T,\ell}$ for some $T\in\tri_H$ and $\ell\in\mathbb{N}$ define the localized remainder space 
\begin{equation}\label{e:RHloc}
W(\Omega_{T,\ell}):=\{w\in W\;\vert\;w\vert_{\Omega\setminus\Omega_{T,\ell}}=0\}
\end{equation}
and the localized sesquilinear form 
\begin{equation}\label{e:aloc}
 a_{D}(u,v):=(\nabla u,\nabla v)_{D}-\k^2(u,v)_{D} - i\k(u,v)_{\Gamma_R\cap\partial D}, 
\end{equation}
where $D$ is any subdomain of $\Omega$, e.g., $D=\Omega_{T,\ell}$ or $D=T$. Then, given some finite element function $v_H\in V_H$, its localized primal correction $\Cor_\ell v_H$ is defined via localized element correctors in the following way:
\begin{equation}\label{e:corelemp}
 \Cor_\ell v_H := \sum_{T\in\tri_H}\Cor_{T,\ell}(v_H\vert_T),
\end{equation}
where $\Cor_{T,\ell}(v_H\vert_T)\in W(\Omega_{T,\ell})$ satisfies
\begin{equation}\label{e:corelemp3}
 a_{\Omega_{T,\ell}}(\Cor_T(v_H\vert_T),w)=a_T(v_H,w) 
\end{equation}
for all $w\in W(\Omega_{T,\ell})$.
The localized dual correction is $\Cor^*_\ell v_H:=\overline{\Cor_\ell \bar{v}_H}$. It holds that  $\Cor^*_\ell=\Cor_\ell$ whenever $\Omega_{T,\ell}\cap\Gamma_R=\emptyset$. 

Note that \eqref{e:corelemp3} is truly localized insofar as the linear constraints $(w,\phi_z)_\Omega=0$ ($z\in\N_H$) that characterize an element $w\in W$ need to be checked only for $z\in\N_H\cap\Omega_{T,\ell}$ and are satisfied automatically for all other nodes if $w\in W(\Omega_{T,\ell})$. We shall also stress that if the mesh is (locally) structured so that some patches are equal up to translation or rotation with the same local triangulation, then also the corresponding correctors will coincide up to shift and rotation. This means that on a uniform mesh only $\mathcal{O}(1)$ interior cell problems need to be solved plus a number of cell problems that capture all possible intersections of the patches and the boundary parts. On polyhedral domains, this number depends only on the oversampling parameter $\ell$ and the number of boundary faces of the domain. 

Though being localized, the correctors $\Cor_{T,\ell}$ and $\Cor^*_{T,\ell}$ are still somewhat ideal because their evaluation requires the solution of an infinite-dimensional variational problem in the space $W(\Omega_{T,\ell})$. To be fully practical, we will also have to discretize the local corrector problems \eqref{e:corelemp3}. This step and the analysis of corresponding errors will be discussed Section~\ref{s:fine} below. 
The remaining part of the section is devoted to the analysis of the perturbation introduced by the localization of the correctors.

An error bound for the localized approximation of  the corrector $\Cor$ and its adjoint $\Cor^*$ is easily derived from the exponential decay property of Theorem~\ref{t:exp}. 
\begin{lemma}[local approximation of element correctors]\label{l:localization}If the resolution condition of Assumption~\ref{a:resolution} is satisfied, then, for any $T\in\tri_H$ and any $\ell\in\mathbb{N}$, it holds that
\begin{equation*}
 \|\nabla(\Cor_{T,\infty} v-\Cor_{T,\ell} v)\|_\Omega\leq \Cdec'\beta^{\ell} \|\nabla v\|_{T},
\end{equation*}
where $\beta<1$ is the constant from Theorem~\ref{t:exp} and \\$\textstyle\Cdec':=\left(6\Ca^2(1+\Cint^2\Cint'^2)\left(\tfrac{3}{2}+\Cint^2\Col\gamma^2\right)\right)^{1/2}\Cdec\beta^{-6}$.
\end{lemma}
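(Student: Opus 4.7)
Set $\psi := \Cor_{T,\infty}v\in W$ and $\psi_\ell := \Cor_{T,\ell}v\in W(\Omega_{T,\ell})$, and put $e:=\psi-\psi_\ell\in W$. The first observation is a Galerkin orthogonality: for every $w\in W(\Omega_{T,\ell})$, the defining equations \eqref{e:corelem2} and \eqref{e:corelemp3} give
\begin{equation*}
  a(\psi,w)=a_T(v,w)=a_{\Omega_{T,\ell}}(\psi_\ell,w)=a(\psi_\ell,w),
\end{equation*}
since $w$ is supported in $\Omega_{T,\ell}$, so $a(e,w)=0$. Combined with the $W$-ellipticity \eqref{e:ell} and the continuity \eqref{e:acont} of $a$, this yields the C\'ea-type reduction
\begin{equation*}
  \|e\|_V \;\leq\; 3\Ca\,\inf_{w\in W(\Omega_{T,\ell})}\|\psi-w\|_V,
\end{equation*}
and the rest of the proof is the construction and evaluation of one convenient $w$.

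Following the standard LOD localisation trick I pick a Lipschitz cut-off $\eta$, built as in the proof of Theorem~\ref{t:exp}, with $\eta=1$ on $\Omega_{T,\ell-6}$, $\eta=0$ outside $\Omega_{T,\ell-5}$ and $\|H\nabla\eta\|_{L^\infty(\Omega)}\leq\gamma$, and set
\begin{equation*}
  w := \eta\psi-\Iloc\!\left(\IH(\eta\psi)\right).
\end{equation*}
Since $\IH\circ\Iloc=\mathrm{id}$ on $V_H$ by \eqref{e:inv}, linearity gives $\IH w=0$, so $w\in W$; the support property in \eqref{e:inv} together with $\support(\eta\psi)\subset\Omega_{T,\ell-5}$ yields $\support(w)\subset\Omega_{T,\ell}$, hence $w\in W(\Omega_{T,\ell})$ as required.

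Writing $\psi-w=(1-\eta)\psi+\Iloc(\IH(\eta\psi))$ and using the equivalence \eqref{e:equiv} on $W$, it suffices to bound each piece in $\|\nabla\cdot\|_\Omega$. A product rule expansion for the first piece, together with $\|H\nabla\eta\|_{L^\infty}\leq\gamma$, the approximation bound \eqref{e:interr} applied to $\psi$ (exploiting $\IH\psi=0$ to trade $\|\psi\|_{L^2}$ for $\|H\nabla\psi\|_{L^2}$ on one additional layer), and the finite overlap \eqref{e:Col}, controls $\|\nabla((1-\eta)\psi)\|_\Omega$ by a constant multiple of $\|\nabla\psi\|_{\Omega\setminus\Omega_{T,\ell-7}}$. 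For the second piece, \eqref{e:inv} followed by \eqref{e:interr} gives $\|\nabla\Iloc(\IH(\eta\psi))\|_\Omega\leq\Cint'\Cint\sqrt{\Col}\|\nabla(\eta\psi)\|_{\Omega\setminus\Omega_{T,\ell-6}}$, which is again dominated by the same annular quantity. Inserting Theorem~\ref{t:exp} at the patch level $\ell-7$, i.e.\ $\|\nabla\psi\|_{\Omega\setminus\Omega_{T,\ell-7}}\leq\Cdec\beta^{\ell-7}\|\nabla v\|_T$, and collecting $\Ca,\Cint,\Cint',\Col,\gamma$ reproduces the advertised form of $\Cdec'$; the dual statement follows from the conjugation identity $\Cor^*_{T,\ell}v=\overline{\Cor_{T,\ell}\bar v}$. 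The one delicate point is the layer bookkeeping---the cut-off, $\IH$ and $\Iloc$ each enlarge supports by one coarse layer, so $\eta$ must be placed deep enough inside $\Omega_{T,\ell}$ for $w$ to remain localised, while the annulus carrying the exponentially small gradient norm must remain a controlled number of layers away from $T$. This accounting is precisely what forces the $\beta^{-6}$ prefactor in the constant $\Cdec'$.
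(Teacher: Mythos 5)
Your proposal is correct and follows essentially the same route as the paper: Galerkin orthogonality of the localized corrector plus C\'ea's lemma, a test function of the form $\eta\psi-\Iloc(\IH(\eta\psi))$ built from a coarse cut-off, and the exponential decay of Theorem~\ref{t:exp} on the resulting annulus. The only (harmless) deviation is your more conservative placement of the cut-off layers, which makes your accounting land on $\beta^{\ell-7}$ rather than the paper's $\beta^{\ell-6}$ (the paper takes $\eta=1$ on $\Omega_{T,\ell-3}$ and $\eta=0$ outside $\Omega_{T,\ell-2}$), so your constant would carry $\beta^{-7}$ instead of the advertised $\beta^{-6}$.
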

\begin{proof}
Define the cut-off function $\eta$ (depending on $T$ and $\ell$) 
\begin{equation}\label{e:cutoff1}
\eta(x) := \frac{\dist(x,\Omega\setminus\Omega_{T,\ell-2})}{\dist(x,\Omega_{T,\ell-3})+\dist(x,\Omega\setminus\Omega_{T,\ell-2})}.
\end{equation}
Note that $\eta=1$ in $\Omega_{T,\ell-3}$ and $\eta=0$ outside $\Omega_{T,\ell-2}$. Moreover, $\eta$ is bounded between 0 and 1 and satisfies the Lipschitz bound \eqref{e:cutoffH}.
Since $\Cor_{T,\ell}v$ is the Galerkin approximation of $\Cor_{T,\infty}v$ and $\eta\Cor_{T,\infty}v-\Iloc(\IH(\eta\Cor_{T,\infty}v))\in W(\Omega_{T,\ell})$, C\'ea's lemma plus Lemma~\ref{l:wellcor}, the definition of $\Cor_{T,\infty}$ \eqref{e:corelem2}, the Cauchy-Schwarz inequality, the approximation property \eqref{e:interr} of the interpolation operator $\IH$, the shape regularity of the mesh (cf. \eqref{e:cutoffH}) and the resolution condition Assumption~\ref{a:resolution} imply 
\begin{align*}
 \|&\nabla(\Cor_{T,\infty} v-\Cor_{T,\ell} v)\|_\Omega^2\leq3\Ca^2\|\Cor_{T,\infty} v-(\eta\Cor_{T,\infty}v-\Iloc(\IH(\eta\Cor_{T,\infty}v))\|_V^2\\
 &\leq 6\Ca^2 \left(\|\nabla((1-\eta)\Cor_{T,\infty} v)\|_\Omega^2+\k^2\|(1-\eta)\Cor_{T,\infty} v\|_\Omega^2\right)\\
 &\quad +6\Ca^2\Cint^2\Cint'^2\left(\|\nabla(\eta\Cor_{T,\infty}v)\|_{\Omega_{T,\ell}\setminus\Omega_{T,\ell-5}}^2+\k^2\|\eta\Cor_{T,\infty}v\|_{\Omega_{T,\ell}\setminus\Omega_{T,\ell-5}}^2\right)\\
 &\leq 6\Ca^2(1+\Cint^2\Cint'^2) \left(\|\nabla\Cor_{T,\infty} v\|_{\Omega\setminus\Omega_{T,\ell-5}}^2\right.
\\
 &\quad \left.+\Cint^2\Col\|H\nabla\eta\|_{L^{\infty}(\Omega)}^2\|\nabla\Cor_{T,\infty} v\|_{\Omega\setminus\Omega_{T,\ell-6}}^2+\Cint^2\Col(H\k)^2\|\nabla\Cor_{T,\infty} v\|_{\Omega\setminus\Omega_{T,\ell-6}}^2\right)\\
 &\leq 6\Ca^2(1+\Cint^2\Cint'^2)\left(\tfrac{3}{2}+\Cint^2\Col\gamma^2\right)\|\nabla\Cor_{T,\infty} v\|_{\Omega\setminus\Omega_{T,\ell-6}}^2. 
 \end{align*}
This and Theorem~\ref{t:exp} readily imply the assertion. 
\end{proof}

\begin{theorem}[error of the localized corrections]\label{t:errorlocalization}
If the resolution condition of Assumption~\ref{a:resolution} is satisfied, then, for any $\ell\in\mathbb{N}$, it holds that
\begin{equation*}
 \|\nabla (\Cor_\infty v-\Cor_\ell v)\|_\Omega\leq \Cloc{\ell}\beta^{\ell} \|\nabla v\|_\Omega,
\end{equation*}
where $\Cloc{\ell}:=3\sqrt{3}\Coll{\ell+5}\Ca^{2} (1+\Cint'\Cint)(\tfrac{3}{2}+\Cint^2\Col\gamma^2)\Cdec'$.
\end{theorem}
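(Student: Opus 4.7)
The plan is to reduce the global estimate to a sum of the per-element bounds of Lemma~\ref{l:localization} by exploiting the $W$-ellipticity of $a$ and the finite overlap of patches. First note that $e := \Cor_\infty v - \Cor_\ell v$ lies in the remainder space $W$: the global corrector is in $W$ by definition, and the localized corrector $\Cor_\ell v = \sum_{T\in\tri_H} \Cor_{T,\ell}(v|_T)$ is a sum of elements of $W(\Omega_{T,\ell}) \subset W$. Hence \eqref{e:equiv} and \eqref{e:ell} yield
\[
 \|\nabla e\|_\Omega^2 \leq \|e\|_V^2 \leq 3\,|a(e,e)| \leq 3\sum_{T\in\tri_H}|a(e_T,e)|,
\]
where $e_T := \Cor_{T,\infty}(v|_T) - \Cor_{T,\ell}(v|_T)$ satisfies the global per-element bound $\|\nabla e_T\|_\Omega \leq \Cdec' \beta^\ell \|\nabla v\|_T$ supplied by Lemma~\ref{l:localization}.

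The core task is then to bound $|a(e_T,e)|$ in terms of $\|e\|_V$ on a mildly enlarged patch around $T$, so that the finite overlap can be invoked without paying a factor $\#\tri_H$. I would pick a Lipschitz cut-off $\eta_T$ equal to $1$ on $\Omega_{T,\ell+3}$, vanishing outside $\Omega_{T,\ell+4}$, with $\|H\nabla\eta_T\|_{L^\infty(\Omega)} \leq \gamma$, and form the localized test function $\tilde e_T := \eta_T e - \Iloc(\IH(\eta_T e)) \in W$, whose support lies inside $\Omega_{T,\ell+5}$ by \eqref{e:inv}. Splitting $a(e_T,e) = a(e_T,\tilde e_T) + a(e_T,e-\tilde e_T)$, the first summand is controlled via continuity of $a$ and a $V$-stability bound for $\tilde e_T$ of the form $\|\tilde e_T\|_V \leq (1+\Cint'\Cint)(\tfrac32+\Cint^2\Col\gamma^2)^{1/2}\|e\|_{V(\Omega_{T,\ell+5})}$, obtained by combining the Leibniz rule, the interpolation estimate \eqref{e:interr}, the local right inverse \eqref{e:inv}, and the resolution condition \eqref{e:resolution}. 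The second summand is controlled by the exponential decay of $e_T$ outside $\Omega_{T,\ell}$: indeed, $e-\tilde e_T$ vanishes on $\Omega_{T,\ell+3}$, because there $\eta_T=1$ and $\IH(\eta_T e) = \IH(e) = 0$ since $e\in W$. The decay estimate of Theorem~\ref{t:exp}, applied to $e_T = \Cor_{T,\infty}(v|_T)$ on $\Omega\setminus\Omega_{T,\ell}$, reduces this remainder to an additional $\beta^\ell$ factor that is absorbed into the main bound.

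Finally, I would sum over $T\in\tri_H$ and invoke Cauchy--Schwarz together with the finite overlap identity $\sum_T \|e\|_{V(\Omega_{T,\ell+5})}^2 \leq \Coll{\ell+5}\|e\|_V^2$ and $\sum_T \|\nabla v\|_T^2 = \|\nabla v\|_\Omega^2$, assembling the per-element contributions into a global estimate of the form
\[
 \sum_T |a(e_T,e)| \leq \sqrt{3}\,\Coll{\ell+5}\,\Ca^2(1+\Cint'\Cint)(\tfrac32+\Cint^2\Col\gamma^2)\,\Cdec'\,\beta^\ell\|\nabla v\|_\Omega\,\|e\|_V.
\]
After dividing through by $\|e\|_V$ and using $\|\nabla e\|_\Omega \leq \|e\|_V$, one obtains the claim with the constant $\Cloc{\ell}$ as stated. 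The hardest part will be the delicate bookkeeping of offsets and constants: the buffers $\ell+3,\ell+4,\ell+5$ have to be chosen so that the inner patch is wide enough for $\IH$ to annihilate $e$ on it, the transition region of $\eta_T$ fits inside the support region of $\Iloc\IH$, and every interpolation, cut-off and continuity constant combines cleanly into the advertised factor $\Ca^2(1+\Cint'\Cint)(\tfrac32+\Cint^2\Col\gamma^2)\Coll{\ell+5}\Cdec'$.
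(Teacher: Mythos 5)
Your overall frame (element decomposition, $W$-ellipticity, cutoff test functions, finite overlap, then Lemma~\ref{l:localization}) is the right one, and your first summand $a(e_T,\tilde e_T)$ is handled essentially as in the paper. The genuine gap is in the second summand $a(e_T,e-\tilde e_T)$. Writing $e-\tilde e_T=(1-\eta_T)e+\Iloc(\IH(\eta_T e))$, the piece $(1-\eta_T)e$ is supported on all of $\Omega\setminus\Omega_{T,\ell+3}$, i.e.\ it is \emph{not} localized to a patch around $T$; its $V$-norm is comparable to the global $\|e\|_V$. Your decay argument then gives $|a(e_T,e-\tilde e_T)|\lesssim\beta^{\ell}\|\nabla v\|_T\,\|e\|_V$, and summing over $T$ forces a discrete Cauchy--Schwarz of the form $\sum_T\|\nabla v\|_T\leq(\#\tri_H)^{1/2}\|\nabla v\|_\Omega$. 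You therefore cannot ``absorb'' this term into a bound with the overlap constant $\Coll{\ell+5}$; you end up with a factor $(\#\tri_H)^{1/2}\sim H^{-d/2}$ in place of $\Coll{\ell+5}\sim\ell^d$. That proves a weaker statement than the theorem, and it matters downstream: the oversampling condition \eqref{e:oversampling} relies on $\Cloc{\ell}$ growing only polynomially in $\ell$, whereas your constant would additionally force $\ell\gtrsim\log(1/H)$ already for stability.

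The paper avoids this by orienting the cutoff the other way ($\eta=0$ on $\Omega_{T,\ell+2}$, $\eta=1$ outside $\Omega_{T,\ell+3}$) and testing with $w:=\eta z-\Iloc(\IH(\eta z))\in W$. This $w$ is supported outside $\Omega_{T,\ell}$ and vanishes on $T$, so it is annihilated \emph{exactly}: $a(\Cor_{T,\ell}v,w)=0$ by disjoint supports and $a(\Cor_{T,\infty}v,w)=a_T(v,w)=0$ by the corrector equation \eqref{e:corelem2}. Hence the entire far field drops out with no decay estimate, and what remains, $z-w=(1-\eta)z+\Iloc(\IH(\eta z))$, is supported in $\Omega_{T,\ell+5}$, so the finite overlap applies cleanly and only Lemma~\ref{l:localization} is needed for $\|z_T\|_V$. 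You never invoke the defining equations \eqref{e:corelem2}, \eqref{e:corelemp3} in the assembly step, and that is the missing idea. A secondary, repairable slip: $e-\tilde e_T$ does not vanish on all of $\Omega_{T,\ell+3}$, since $\IH(\eta_T e)$ is nonzero at nodes whose patches straddle the transition region and $\Iloc$ spreads this one more layer inward; the correct claim is that it vanishes on roughly $\Omega_{T,\ell+1}$, which still contains $\Omega_{T,\ell}$, so only the offsets change.
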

\begin{proof}
Set $z:=\Cor_\infty v-\Cor_{\ell} v$ and, for any $T\in\tri_H$, set $z_T:=\Cor_{T,\infty} v-\Cor_{T,\ell} v$. The $W$-ellipticity of the sesquilinear form \eqref{e:ell} implies that
\begin{align}\label{e:overlap1}
 \tfrac{1}{3}\|\nabla z\|_\Omega^2&\leq \sum_{T\in\tri_H} a(z_T,z). 
\end{align}
Given some $T\in\tri_H$, let $\eta$ be the cutoff function defined by 
\begin{equation*}
\eta(x) := \frac{\dist(x,\Omega_{T,\ell+2})}{\dist(x,\Omega_{T,\ell+2})+\dist(x,\Omega\setminus\Omega_{T,\ell+3})},
\end{equation*}
that is $\eta=0$ in $\Omega_{T,\ell+2}$ and $\eta=1$ outside $\Omega_{T,\ell+3}$. Moreover, $\eta$ is bounded between 0 and 1 and satisfies the Lipschitz bound \eqref{e:cutoffH}.
Since $\operatorname{supp}\Iloc(\IH(\eta z))\subset \Omega\setminus\Omega_{T,\ell}$ and $\eta z - \Iloc(\IH(\eta z))\in W$, we have that 
\begin{equation*}
 a(z_T,\eta z - \Iloc(\IH(\eta z)))=a(\Cor_{T,\infty}v,\eta z - \Iloc(\IH(\eta z)))=0. 
\end{equation*}
Hence, 
\begin{equation*}
 a(z_T,z)=a(z_T,\Iloc(\IH(\eta z))+a(z_T,(1-\eta)z).
 \end{equation*}
The properties \eqref{e:interr} of the interpolation operator $\IH$ and the Lipschitz bound \eqref{e:cutoffH} lead to upper bounds 
\begin{equation}\label{e:overlap2}
 a(z_T,z)\leq \Ca (1+\Cint'\Cint)\sqrt{1+\Cint^2\Col\gamma^2}\|z\|_{V,\Omega_{T,\ell+5}}\|z_T\|_V.
\end{equation}
The combination of \eqref{e:overlap1} and \eqref{e:overlap2} plus a discrete Cauchy-Schwarz inequality and the bounded overlap \eqref{e:Col} of the element patches leads to 
 \begin{align}\label{e:overlap3}
 \|\nabla z\|_\Omega&\leq 2\Coll{\ell+3}\Ca (1+\Cint'\Cint)\sqrt{1+\Cint^2\Col\gamma^2}\left(\sum_{T\in\tri_H}\|z_T\|_V^2\right)^{1/2}.
\end{align}
This and Lemma~\ref{l:localization} readily yield the assertion.
\end{proof}

\subsection{Localized trial and test spaces}
The localized trial space $\Vpl\subset V$ is simply defined as the image of the classical finite element space $V_H$ under the operator $1-\Cor_\ell$,
\begin{equation}\label{e:coarsel}
 \Vpl:=(1-\Cor_\ell)V_H
\end{equation}
and the localized test space $\Vdl\subset V$ reads
\begin{equation}\label{e:coarseld}
 \Vdl:=(1-\Cor^*_\ell)V_H
\end{equation}

Note that both $\Vpl$ and $\Vdl$ are finite-dimensional with a local basis, 
\begin{equation*}
 \Vpl=\operatorname*{span}\{(1-\Cor_\ell)\phi_z\,\vert\,z\in \N_H\}\text{ and }\Vdl=\operatorname*{span}\{\overline{(1-\Cor_\ell)\phi_z}\,\vert\,z\in \N_H\},
\end{equation*}
where $\phi_z$ is the (real-valued) nodal basis of $V_H$ (cf. Section~\ref{ss:mesh}).

The Petrov-Galerkin method with respect to the trial space $\Vpl$ and the test space $\Vdl$ seeks $\upl\in\Vpl$ such that, for all $\vdl\in \Vdl$, 
\begin{equation}\label{e:VclGalerkin}
a(\upl,\vdl)=(f,\vdl)_\Omega.
\end{equation} 

\subsection{Stability of the localized method}
The stability of the localized methods requires the coupling of the oversampling parameter to the stability constant which we will now assume to be polynomial with respect to the wave number. 
\begin{assumption}[polynomial-in-$\k$-stability and logarithmic oversampling condition]\label{a:oversampling}
There are constants $\Cpstab>0$ and $\bstab \geq 0$ and a $\k_0>0$ that may depend on $\Omega$ and the partition of the boundary into $\Gamma_D$, $\Gamma_N$ and $\Gamma_R$ such that, for any $\k\geq\k_0$, the stability constant $\Cstab(\k)$ of \eqref{e:stab} satisfies \eqref{e:stabpoly},
\begin{equation*}
\Cstab(\k)\leq \Cpstab \k^\bstab.
\end{equation*}
Given the wave number $\k$ and the constants $\Cint$ from \eqref{e:interr} and $\Col$ from \eqref{e:Col}, we assume that the oversampling parameter $\ell$ satisfies 
\begin{equation}\label{e:oversampling}
 \ell\geq\frac{(\bstab+1)\log\k+\log\left(4\CC\Cpstab\CPH\sqrt{\tfrac{3}{2}}\Cloc{\ell}\Ca\right)}{|\log\beta|}.
\end{equation}
\end{assumption}
Since the constant $\Cloc{\ell}$ grows at most polynomially with $\ell$ (cf. \eqref{e:Col}), condition~\eqref{e:oversampling} is indeed satisfiable and the proper choice of $\ell$ will be dominated by the logarithm $\log\k$ of the wave number.

The stability of the localized method follows from the fact that the ideal pairing $(\Vp,\Vd)$ is stable and that $(\Vpl,\Vdl)$ is exponentially close. 
\begin{theorem}[stability of the localized method]\label{t:stabloc}
If the mesh width $H$ is sufficiently small in the sense of Assumption~\ref{a:resolution} ($H\k\lesssim 1$) and if the oversampling parameter $\ell\in\mathbb{N}$ is sufficiently large in the sense of Assumption~\ref{a:oversampling} ($\ell\gtrsim\log\k $), then the pairing of the localized spaces $\Vpl$ and $\Vdl$ satisfies the discrete inf-sup condition
\begin{equation}\label{e:infsupdiscl}
\inf_{\upl\in \Vpl\setminus\{0\}}\sup_{\vdl\in \Vdl\setminus\{0\}}\frac{\Re a(\upl,\vdl)}{\|\upl\|_V\|\vpl\|_V}\geq \frac{1}{4\CC\Cpstab\k^{\bstab+1}}.
\end{equation}
This ensures that, for any $f\in V'$, there exists a unique solution of the discrete problem \eqref{e:VclGalerkin}.
\end{theorem}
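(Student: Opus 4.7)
The plan is to upgrade the ideal inf-sup condition of Theorem~\ref{t:stabglob} to the localized trial-test pair $(\Vpl,\Vdl)$ by a perturbation argument, exploiting that Theorem~\ref{t:errorlocalization} quantifies the distance between the ideal and the localized correctors as $\Cloc{\ell}\beta^{\ell}$. Given any $\upl\in\Vpl\setminus\{0\}$, I would first recover its coarse component $u_H\in V_H$ with $\upl=(1-\Cor_\ell)u_H$. Because $\Cor_\ell u_H\in W=\kernel\IH$ and $\Pi_H$ annihilates $W$, one has $u_H=\Pi_H\upl$, and hence \eqref{e:CPH} yields $\|u_H\|_V\leq\CPH\|\upl\|_V$. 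I then set $\up:=(1-\Cor_\infty)u_H\in\Vp$, so that Theorem~\ref{t:errorlocalization} combined with the norm equivalence \eqref{e:equiv} on $W$ gives
\begin{equation*}
 \|\upl-\up\|_V=\|(\Cor_\infty-\Cor_\ell)u_H\|_V\leq \sqrt{\tfrac{3}{2}}\,\Cloc{\ell}\,\beta^{\ell}\,\CPH\,\|\upl\|_V.
\end{equation*}

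Next I would apply the ideal inf-sup bound \eqref{e:infsupdisc} to $\up$ to obtain some $\vd\in\Vd\setminus\{0\}$, scaled so that $\|\vd\|_V=\|\up\|_V$, satisfying $\Re a(\up,\vd)\geq\frac{1}{2\CC\Cstab(\k)\k}\|\up\|_V\|\vd\|_V$. Writing $\vd=(1-\Cor_\infty^*)z_H$ with $z_H:=\Pi_H\vd$ (so that $\|z_H\|_V\leq\CPH\|\vd\|_V$), I define the localized test function $\vdl:=(1-\Cor_\ell^*)z_H\in\Vdl$; the same argument applied to the adjoint corrector yields $\|\vdl-\vd\|_V\leq\sqrt{3/2}\,\Cloc{\ell}\beta^{\ell}\CPH\|\vd\|_V$. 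The identity
\begin{equation*}
 a(\upl,\vdl)=a(\up,\vd)+a(\upl-\up,\vd)+a(\up,\vdl-\vd)+a(\upl-\up,\vdl-\vd)
\end{equation*}
then lets me control $a(\upl,\vdl)$ from below by estimating the last three terms with the continuity \eqref{e:acont}; each of them contributes at most a factor $\Ca\CPH\sqrt{3/2}\,\Cloc{\ell}\beta^{\ell}$ (times $\|\up\|_V\|\vd\|_V$), the cross term being quadratically smaller.

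The oversampling condition \eqref{e:oversampling} is calibrated precisely so that, after invoking $\Cstab(\k)\leq\Cpstab\k^\bstab$, this combined perturbation is bounded by half of the ideal lower bound $\frac{1}{2\CC\Cpstab\k^{\bstab+1}}\|\up\|_V\|\vd\|_V$. Absorbing the perturbation and then converting $\|\up\|_V$ back to $\|\upl\|_V$ and $\|\vd\|_V$ back to $\|\vdl\|_V$ (both at the cost of factors $1\pm O((\CC\Cpstab\k^{\bstab+1})^{-1})$ that are harmless for $\k\geq\k_0$) yields the asserted bound $\Re a(\upl,\vdl)\geq\frac{1}{4\CC\Cpstab\k^{\bstab+1}}\|\upl\|_V\|\vdl\|_V$. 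Unique solvability of \eqref{e:VclGalerkin} for any $f\in V'$ then follows since $\dim\Vpl=\dim\Vdl=\dim V_H$ and the inf-sup bound on a square system implies bijectivity. The main obstacle I expect is the careful bookkeeping of multiplicative constants so that the product $4\CC\Cpstab\CPH\sqrt{3/2}\Cloc{\ell}\Ca$ appearing inside the logarithm of \eqref{e:oversampling} is exactly what the perturbation argument requires; in particular one must track that the decay factor $\Cloc{\ell}\beta^{\ell}$ enters each perturbation term only once, that the $\|\cdot\|_V$-to-$\|\nabla\cdot\|_\Omega$ conversion on $W$ costs only $\sqrt{3/2}$, and that the coarse stability constant $\CPH$ is applied once to $u_H$ and once to $z_H$.
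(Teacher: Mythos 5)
Your proposal is correct and is essentially the paper's own argument: both perturb the ideal inf-sup condition of Theorem~\ref{t:stabglob} using the localization estimate of Theorem~\ref{t:errorlocalization}, with $\up=(1-\Cor_\infty)\Pi_H\upl$ and a localized test function $\vdl=(1-\Cor^*_\ell)\Pi_H\vd$. The one simplification you miss is that the term $a(\upl-\up,\vd)$ in your four-term expansion is not merely small but exactly zero: $\upl-\up=(\Cor_\infty-\Cor_\ell)\Pi_H\upl\in W$ and $\vd\in\Vd$, so \eqref{e:orthoad} applies. The paper is therefore left with the single perturbation term $\Re a(\upl,\vdl-\vd)$, bounded by $\CPH\sqrt{3/2}\,\Cloc{\ell}\Ca\beta^{\ell}\|\upl\|_V\|\vdl\|_V$, and the constant $4\CC\Cpstab\CPH\sqrt{3/2}\Cloc{\ell}\Ca$ inside the logarithm of \eqref{e:oversampling} is calibrated to exactly this one term; with your three continuity estimates the oversampling threshold would have to be enlarged by an additive $\mathcal{O}(1)$ (harmless for the asymptotic claim $\ell\gtrsim\log\k$, but needed to recover the stated constant $1/(4\CC\Cpstab\k^{\bstab+1})$ verbatim). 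On the other hand, your explicit treatment of the conversion from $\|\up\|_V\|\vd\|_V$ to $\|\upl\|_V\|\vdl\|_V$ at the end is more careful than the paper, which performs this replacement silently in \eqref{e:stabproof3}.
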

\begin{proof}
Let $\upl\in\Vpl$ and set $\up:=(1-\Cor)\Pi_H\upl$. Under the polynomial-in-$\k$ stability of Assumption~\ref{a:oversampling}, Theorem~\ref{t:stabglob} guarantees the existence of some $\vd\in\Vd$ with 
\begin{equation}\label{e:stabproof1}
 \Re a(\up,\vd)\geq \frac{1}{2\Cpstab\CC\k^{\bstab+1}}\|\up\|_V\|\vd\|_V.
\end{equation}
Set $\vdl:=(1-\Cor^*_\ell)\Pi_H\vp\in\Vdl$ and observe that \eqref{e:orthoad} yields 
\begin{equation*}
\begin{aligned}
 \Re a(\upl,\vdl)&=\Re a(\upl,\vdl-\vd)+a(\upl,\vd)\\
 &=\Re a(\upl,(\Cor^*-\Cor^*_\ell)\Pi_H\vdl)+\Re a(\up,\vd).
 \end{aligned}
 \end{equation*}
Hence,
\begin{equation}\label{e:stabproof2}
\begin{aligned}
 \Re a(\upl,\vdl)&\geq\Re  a(\up,\vd)-\Ca\|\upl\|_V\|(\Cor^*-\Cor^*_\ell)\Pi_H\vdl\|_V\\
 &\geq\Re  a(\up,\vd)-\CPH\sqrt{\tfrac{3}{2}}\Cloc{\ell}\Ca\beta^{\ell}\|\upl\|_V\|\vdl\|_V,
\end{aligned}
\end{equation}
where we have used \eqref{e:equiv}, Theorem~\ref{t:errorlocalization}, and \eqref{e:CPH}. This yields
\begin{equation}\label{e:stabproof3}
\begin{aligned}
 \Re a(\upl,\vdl)&\geq \frac{1}{\CC\Cstab\k^{\bstab+1}}\|\up\|_V\|\vd\|_V-C'\beta^{\ell}\|\upl\|_V\|\vdl\|_V\\
 &\geq \left(\frac{1}{2\CC\Cpstab\k^{\bstab+1}}-\CPH\sqrt{\tfrac{3}{2}}\Cloc{\ell}\Ca\beta^{\ell}\right)\|\upl\|_V\|\vdl\|_V,
\end{aligned}
\end{equation}
and Assumption~\ref{a:oversampling} readily implies the assertion. 
\end{proof}

\begin{theorem}[error of the localized method]\label{t:errorloc}
If the mesh width $H$ is sufficiently small in the sense of Assumption~\ref{a:resolution} ($H\k\lesssim 1$) and if the oversampling parameter $\ell\in\mathbb{N}$ is sufficiently large in the sense of Assumption~\ref{a:oversampling} ($\ell\gtrsim\log\k $), then the localized Petrov-Galerkin approximation $\upl\in\Vpl$ satisfies the error estimate
\begin{equation}\label{e:errorloc}
\|u-\upl\|_V\leq  6\sqrt{\Col}\Cint\|H f\|_\Omega+6\Ca\Cloc{\ell}\CPH\Cpstab \k^\bstab\beta^{\ell} \|f\|_\Omega.
\end{equation}
\end{theorem}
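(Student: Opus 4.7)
The plan is to combine the ideal error estimate from Theorem~4.3, the corrector localization bound from Theorem~5.2, and the discrete inf--sup stability of the localized pairing from Theorem~5.4, using the logarithmic oversampling condition to absorb the $\k^{\bstab+1}$ factor coming from inf--sup. First, I split
\begin{equation*}
  \|u-\upl\|_V \leq \|u-\up\|_V + \|\up-\upl\|_V
\end{equation*}
and bound the first term directly by Theorem~4.3 to get $\|u-\up\|_V \leq 3\sqrt{\Col}\Cint\|Hf\|_\Omega$. For the second term I introduce an intermediate candidate $\tilde u := (1-\Cor_\ell)\Pi_H u \in \Vpl$ and use $\|\up-\upl\|_V \leq \|\up-\tilde u\|_V + \|\tilde u-\upl\|_V$.

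The term $\up - \tilde u$ is easy. Since $u-\Pi_H u\in W = \ker(1-\Cor_\infty)$, one has $\up = (1-\Cor_\infty)u = (1-\Cor_\infty)\Pi_H u$, so $\up-\tilde u = (\Cor_\ell-\Cor_\infty)\Pi_H u \in W$. The norm equivalence \eqref{e:equiv} on $W$ together with Theorem~5.2, the $V$-stability of $\Pi_H$ \eqref{e:CPH}, and the a priori bound $\|u\|_V \leq \Cpstab\k^{\bstab}\|f\|_\Omega$ from \eqref{e:stab} with \eqref{e:stabpoly} give a bound of order $\Cloc{\ell}\CPH\Cpstab\k^{\bstab}\beta^\ell\|f\|_\Omega$.

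The main obstacle is the term $\|\tilde u - \upl\|_V$, where one must control the $\k^{\bstab+1}$ factor produced by the discrete inf--sup constant of Theorem~5.4. I would apply Theorem~5.4 to $\tilde u - \upl \in \Vpl$, which reduces the task to estimating $|a(\tilde u-\upl,\vdl)|$ uniformly over $\vdl\in\Vdl$. Galerkin orthogonality $a(\upl,\vdl) = (f,\vdl)_\Omega = a(u,\vdl)$ turns this into $|a(\tilde u-u,\vdl)|$. The key observation is that $\tilde u - u = -\Cor_\ell\Pi_H u - (u-\Pi_H u) \in W$. Writing $\vdl = (1-\Cor^*_\ell)v_H$ with $v_H = \Pi_H\vdl \in V_H$ and using the defining property of the adjoint corrector, $a(w,v_H)=a(w,\Cor^*_\infty v_H)$ for every $w\in W$, I obtain
\begin{equation*}
  a(\tilde u-u,\vdl) = a\bigl(\tilde u-u,(\Cor^*_\infty-\Cor^*_\ell)v_H\bigr),
\end{equation*}
so continuity of $a$, Theorem~5.2 with \eqref{e:equiv} applied to the factor $(\Cor^*_\infty-\Cor^*_\ell)v_H\in W$, and \eqref{e:CPH} yield $|a(\tilde u-u,\vdl)| \lesssim \Ca\Cloc{\ell}\CPH\beta^\ell\,\|\tilde u-u\|_V\,\|\vdl\|_V$.

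Combining with the inf--sup inequality from Theorem~5.4, the product $\k^{\bstab+1}\beta^\ell$ appears and is bounded by an $\O(1)$ constant thanks to the oversampling condition \eqref{e:oversampling}; this absorbs the $\k$-growth and leaves $\|\tilde u-\upl\|_V \leq \|\tilde u-u\|_V$ (up to constants). A final triangle inequality $\|\tilde u-u\|_V \leq \|u-\up\|_V + \|\up-\tilde u\|_V$, together with the two bounds already obtained, delivers \eqref{e:errorloc}. The only genuinely delicate point in the argument is the orthogonality trick that replaces $\vdl$ by $(\Cor^*_\infty-\Cor^*_\ell)v_H$; every other step is either a triangle inequality or an application of results already established.
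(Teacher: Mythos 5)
Your argument is correct, and once the notation is unwound it has the same skeleton as the paper's proof: the paper sets $\ep:=u-\upl$ and $\epl:=(1-\Cor_\ell)\Pi_H\ep$, and since $(1-\Cor_\ell)\Pi_H\upl=\upl$ one has $\epl=\tilde u-\upl$ and $\ep-\epl=u-\tilde u$ for your $\tilde u=(1-\Cor_\ell)\Pi_H u$; so both proofs split the error into exactly the same two pieces, $u-\tilde u\in W$ and $\tilde u-\upl\in\Vpl$, and both rest on the same orthogonality trick of rewriting $a(w,\vdl)$ for $w\in W$ as $a\bigl(w,(\Cor_\infty^*-\Cor_\ell^*)\Pi_H\vdl\bigr)$ before absorbing $\k^{\bstab+1}\beta^{\ell}$ via \eqref{e:oversampling}. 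Where you genuinely deviate is in how the two pieces are estimated. For the discrete piece the paper runs an Aubin--Nitsche duality argument with an auxiliary discrete adjoint solution $\zdl$, whose stability is in turn controlled by the discrete inf-sup constant; you instead apply the inf-sup condition of Theorem~\ref{t:stabloc} directly to $\tilde u-\upl$ after using Galerkin orthogonality, which is equivalent in substance but avoids introducing $\zdl$ and is arguably the cleaner bookkeeping. For the consistency piece $\|u-\tilde u\|_V$ the paper re-derives the bound from the $W$-ellipticity \eqref{e:ell} and testing with $f$, whereas you reuse the already-proved ideal error estimate \eqref{e:errorGalerkin} together with Theorem~\ref{t:errorlocalization}, the stability \eqref{e:CPH} of $\Pi_H$ and the a priori bound \eqref{e:stab}; both routes produce the same two terms $H\|f\|_\Omega$ and $\Cloc{\ell}\beta^{\ell}\k^{\bstab}\|f\|_\Omega$, yours with a marginally smaller constant. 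The auxiliary facts you rely on --- $\up=(1-\Cor_\infty)\Pi_H u$, $\tilde u-u\in W$, $\Pi_H\vdl$ recovering the finite element part of $\vdl$, and the validity of Theorem~\ref{t:errorlocalization} for the adjoint correctors via $\Cor_\ell^*v=\overline{\Cor_\ell\bar v}$ --- all check out, so there is no gap.
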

\begin{proof}
The proof is inspired by standard techniques for Galerkin methods (see \cite{schatz}, \cite[Thm. 5.7.6]{MR2373954}, \cite{sauterref}, \cite{banjaisauterref}). Set $\ep:=u-\upl$ and $\epl:=(1-\Cor_\ell)\Pi_H \ep\in\Vpl$. The triangle inequality yields
\begin{equation}\label{e:triangle}
 \|\ep\|_V\leq \|\ep-\epl\|_V+\|\epl\|_V.
\end{equation}

An Aubin-Nitsche duality argument shows that $\|\epl\|_V$ is controlled by some multiple of $\|\ep-\epl\|_V$. Let $\zdl\in \Vdl$ be the unique solution of the discrete adjoint variational problem
\begin{equation*}
 (\nabla\vpl,\nabla\epl)+\k^2(\vpl,\epl) = a(\vpl,\zdl),
\end{equation*}
for all $\vpl\in \Vpl$. Set $\zd:=(1-\Cor^*)\Pi_H\zdl$ and observe that 
\begin{align*}
 \|\epl\|_V^2 &= a(\epl,\zdl-\zd)+a(\epl,\zd)\\
 &= a(\epl,\zdl-\zd)+a(\ep,\zd)\\
 &= a(\epl,\zdl-\zd)+a(\ep,\zd-\zdl)\\
 &= a(\ep-\epl,(\Cor^*-\Cor^*_\ell)\Pi_H \zdl)\\
 &\leq \Ca\|\ep-\epl\|_V\|(\Cor^*-\Cor^*_\ell)\Pi_H \zdl\|_V.
\end{align*}
Under Assumption \eqref{e:stabpoly}, Theorem~\ref{t:errorlocalization}, Theorem~\ref{t:stabloc} and \eqref{e:CPH} readily yield
\begin{equation}\label{e:epl}
 \|\epl\|_V^2 \leq \Ca^2\Cloc{\ell}\beta^{\ell}\CPH\Cpstab\k^{\bstab+1}\|\ep-\epl\|_V^2.
\end{equation}
This, \eqref{e:triangle} and Assumption~\ref{a:oversampling} show that 
\begin{equation}\label{e:epl2}
 \|\ep\|_V \leq %\left(1+\Ca\sqrt{\Cloc{\ell}\beta^{\ell-2}\CPH\Cstab\k^{\bstab+1}}\right)
 2\|\ep-\epl\|_V.
\end{equation}

Since $\ep-\epl\in W$, the $W$-ellipticity \eqref{e:ell} yields 
\begin{equation}\label{e:term1}
 \|\ep-\epl\|_V^2\leq 3\Re a(\ep-\epl,\ep-\epl).
\end{equation}
The relation \eqref{e:orthoa} then yields 
\begin{multline}\label{e:term1b}
 a(\ep-\epl,\ep-\epl)=a(u,\ep-\epl)+a((\Cor-\Cor_\ell)\Pi_H u,\ep-\epl)\\
 \leq \left|(f,\ep-\epl)_\Omega\right|+\Ca\|(\Cor-\Cor_\ell)\Pi_H u\|_V\|\ep-\epl\|_V.
\end{multline}
This, Cauchy inequalities, interpolation error estimates \eqref{e:interr}, Theorem~\ref{t:errorlocalization} and the stability estimate \eqref{e:stabpoly} readily yield the bound 
\begin{equation}\label{e:eepl}
\|\ep-\epl\|_V \leq 3\sqrt{\Col}\Cint\|H f\|_\Omega+3\Ca\Cloc{\ell}\beta^{\ell}\CPH\Cstab \k^\bstab \|f\|_\Omega.
\end{equation}
The combination of \eqref{e:epl2} and \eqref{e:eepl} is the assertion.
\end{proof} 

\section{Fully discrete localized approximation}\label{s:fine}
As already mentioned before, the localized corrector problems \eqref{e:corelem2} are variational problems in infinite-dimensional spaces $W(\Omega_{T,\ell})$ that require further discretization. For the ease of presentation we restrict ourselves in this paper to the classical case of piecewise affine conforming elements on simplicial meshes but we emphasize that the technique easily transfers to more general situations and can be applied to a large variety of discretization schemes and, in particular, to $hp$ adaptive methods. 

So far, the presentation of the method was optimized with respect to theoretical aspects of the stability and error analysis. Here, we will present the method in  a slightly more practical fashion.
\subsection{The fully discrete method}
For any $T\in\tri_H$, choose an oversampling parameter $\ell=\ell_T$ (sufficiently large so that there is a chance that Assumption~\ref{a:oversampling} is satisfied). 
Let $\tri_h(\Omega_{T,\ell})$ be a regular (and possibly adaptive) mesh of width $h< H$ and consider the standard finite element space $V_h(\Omega_{T,\ell})$ of continuous piecewise polynomials of order $1$ (or any higher order) with respect to $\tri_h(\Omega_{T,\ell})$. 
Then the discretized local remainder space is defined by
\begin{equation*}
W_h((\Omega_{T,\ell}):=W(\Omega_{T,\ell})\cap V_h(\Omega_{T,\ell}).
\end{equation*}
For any vertex $y$ of $T$, compute the element corrector $\Cor_{T,\ell,h}\phi_y\in W_h((\Omega_{T,\ell})$ as the unique solution of the discrete cell problem 
\begin{equation*}
 a(\Cor_{T,\ell,h}\phi_y,w)=a_T(\phi_y,w),\quad\text{for all }w\in W_h((\Omega_{T,\ell}).
\end{equation*}
In practice, the linear constraints in the definition of  $W_h((\Omega_{T,\ell})$ related to the nodal functionals $\alpha_z$ from \eqref{e:clement} (for coarse nodes $z$ in the patch $\Omega_{T,\ell}$) are realized using Lagrangian multipliers so that the computation can be performed in the standard finite element space $V_h(\Omega_{T,\ell})$ (up to essential boundary conditions) and no explicit knowledge about a basis of $W_h((\Omega_{T,\ell})$ is required.

For every global vertex $z\in\N_H$, the corrector $\Cor_{\ell,h}\phi_z$ is then given by
\begin{equation*}
 \Cor_{\ell,h}\phi_z := \sum_{T\in\tri_H:z\text{ vertex of }T}\Cor_{T,\ell,h}\phi_z. 
\end{equation*}
This leads to modified basis functions $\tilde{\phi_z}:=\phi_z-\Cor_{\ell,h}\phi_z$ that span a discrete space
\begin{equation}
 V_{H,\ell,h}:=\operatorname*{span}\{\tilde{\phi_z}\;\vert\;z\in\N_H\}
\end{equation}
of the same dimension as the classical finite element space $V_H$. In this most general setting, the discretizations of the cell problems are completely independent and could be very different (e.g. high order polynomials on a coarse mesh for interior patches, or an adaptive discretization when corners of the physical domain are present in the patch).

The fully discrete localized Petrov-Galerkin method with respect to the trial space $\Vplh$ and the test space $\Vdlh$ seeks $\uplh\in\Vplh$ such that, for all $\vdlh\in \Vdlh$, 
\begin{equation}\label{e:VclhGalerkin}
a(\uplh,\vdlh)=(f,\vdlh)_\Omega.
\end{equation} 
\subsection{Error analysis of the fully discrete method}
An a priori error analysis of the general approach would follow the analysis of Section~\ref{s:loc} and trace the error of the additional perturbation depending on the local choice of the approximation space. However, this will require the estimation of the error $\Cor-\Cor_{\ell,h}$ or $\Cor_{\ell}-\Cor_{\ell,h}$, which appears to be non-trivial and requires, for instance, regularity results for the ideal correctors. This line will be followed in future research along with an a posteriori analysis of the method whereas we focus on a special case with a very simple argument in this paper. 

We restrict ourselves to the case of synchronized cell problems in the sense that there is an underlying global fine mesh $\tri_h$ that is a regular refinement of the coarse mesh $\tri_H$. In this case, the global fine space $V_h$ contains standard finite element functions and the spaces for the local cell problems are derived by restriction of $V_h$ to the patch. Then, the method in fact approximates $u_h$, where $u_h\in V_h$ is the Galerkin approximation in the global fine scale, that is,
\begin{equation}\label{e:modelh}
 a(u_h,v_h) = (f,v_h)_\Omega,\quad\text{for all }v_h\in V_h.
\end{equation} 
In the remaining part of this paper, we will refer to $u_h$ as the reference solution that we will compare our approximations with. It is clear that if $h$ is sufficiently small, then the problem \eqref{e:modelh} is well-posed. Since it is unknown in general how to quantify what ``sufficiently small'' means in this context, we have to make an assumption. 
\begin{assumption}[well-posedness of reference problem]\label{a:stabh}
Given $\k$, we assume that $V_h$ is chosen such that, for any $f\in V'$, the reference problem \eqref{e:modelh} admits a unique solution $u_h\in V_h$ that satisfies the polynomial-in-$\k$ stability bound
\begin{equation}\label{e:uhstab}
\|u_h\|_V\leq \Cpstab\k^n\|f\|_{\Omega},
\end{equation}
where the constant $\Cpstab$ is independent of $\kappa$ but may be different from the one in \eqref{e:stabpoly} (which is related to the stability of the full problem \eqref{e:modelvar}).
\end{assumption}

One example, where the range of feasible fine scale parameter $h$ can be quantified is the case of a pure Robin problem in a convex domain discretized by $P_1$ finite elements. In this case, the resolution condition $h\k^{3/2}\lesssim 1$ implies Assumption~\ref{a:stabh} \cite{Wu01072014}. A very limited number of further settings and methods is discussed in the literature (e.g. \cite{MelenkEsterhazy,hiptmair}) but in the majority of scenarios, such a quantification is completely open.
Still, assuming that $h$ is sufficiently fine, we are able to show the stability of the practical method. 
\begin{theorem}[stability and error of the fully discrete method]\label{t:errorloch}
If the fine scale discretization space $V_h$ is sufficiently rich so that Assumption~\ref{a:stabh} holds and if the coarse mesh width $H$ is sufficiently small in the sense of Assumption~\ref{a:resolution} ($H\k\lesssim 1$) and if the oversampling parameter $\ell\gtrsim\log\k$ is sufficiently large in the sense of \eqref{e:oversampling} (with constant $\Cpstab$ from \eqref{e:uhstab}), then the fully discrete localized Petrov-Galerkin approximation $\uplh\in\Vplh$ satisfies the error estimate
\begin{equation}\label{e:errorloch}
\|u_h-\uplh\|_V\leq  C(H+\Cloc{\ell}\beta^{\ell}\k^\bstab)\|f\|_\Omega,
\end{equation}
where $u_h$ solves the reference problem \eqref{e:modelh} and $C$ is some generic constant that does not depend on $H$, $\ell$ and $\k$. 
\end{theorem}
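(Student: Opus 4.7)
The plan is to lift the entire analysis of Sections~\ref{s:omd}--\ref{s:loc} from the infinite-dimensional ambient space $V$ to the fine reference space $V_h$, and then to apply the resulting ``fully discrete ideal'' error bound to $u_h$. Because $\tri_h$ is a regular refinement of $\tri_H$, one has $V_H\subset V_h$; the quasi-interpolation $\IH$ restricts to a surjection $\IH\colon V_h\to V_H$ with the same constants (they depend only on the coarse-scale shape regularity~$\gamma$), and a local right inverse $\Iloc\colon V_H\to V_h$ in the sense of~\eqref{e:inv} is still available, up to a harmless fine-scale modification. Hence the splitting $V_h=V_H\oplus W_h$ with $W_h:=W\cap V_h=\kernel(\IH\vert_{V_h})$ is stable in $L^2$, and the discretized correctors $\Cor_{\ell,h}$ and $\Cor^*_{\ell,h}$ of the fully discrete method are exactly the Galerkin restrictions of the ideal correctors of Section~\ref{s:loc} onto $W_h(\Omega_{T,\ell}):=W_h\cap W(\Omega_{T,\ell})$. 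In particular $\Vplh=(1-\Cor_{\ell,h})V_H$ and $\Vdlh=(1-\Cor^*_{\ell,h})V_H$.

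With this identification, every result of Sections~\ref{s:omd}--\ref{s:loc} carries over verbatim, with $V$ replaced by $V_h$ and $\Cstab(\k)$ replaced by the discrete stability constant $\Cpstab\k^\bstab$ of Assumption~\ref{a:stabh}. Under Assumption~\ref{a:resolution} the proof of Lemma~\ref{l:wellcor} gives $W_h$-ellipticity of $a$; Lax--Milgram then delivers the discrete correctors. The exponential decay of element correctors (Theorem~\ref{t:exp}) is re-proved with the same cutoff function and the same test function $\eta\psi-\Iloc(\IH(\eta\psi))$, yielding the same $\beta$ and $\Cdec$ since these depend only on coarse-scale quantities; the localization estimate of Theorem~\ref{t:errorlocalization} then inherits the same $\Cloc{\ell}$. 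Finally, the discrete inf-sup for the pairing $(\Vplh,\Vdlh)$ is proved as in Theorem~\ref{t:stabloc}, the polynomial-in-$\k$ stability now coming from Assumption~\ref{a:stabh} rather than from the continuous problem; the oversampling condition~\eqref{e:oversampling} again absorbs the localization error.

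One then replays the argument of Theorem~\ref{t:errorloc} inside $V_h$. Write $\ep:=u_h-\uplh$ and $\epl:=(1-\Cor_{\ell,h})\Pi_H\ep\in\Vplh$. The Aubin--Nitsche step uses the discrete dual $\zdl\in\Vdlh$ defined by $(\nabla\vpl,\nabla\epl)+\k^2(\vpl,\epl)=a(\vpl,\zdl)$ for all $\vpl\in\Vplh$; its dual lift $\zd:=(1-\Cor^*_{\infty,h})\Pi_H\zdl$ lives in $V_h$, and Assumption~\ref{a:stabh} together with Theorem~\ref{t:errorlocalization} lifted to $V_h$ produces the analogue of~\eqref{e:epl}, giving $\|\ep\|_V\leq 2\|\ep-\epl\|_V$. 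The $W_h$-ellipticity of $a$ combined with the Galerkin orthogonality of $u_h$ in $V_h$ then yields $\|\ep-\epl\|_V^2\lesssim|(f,\ep-\epl)_\Omega|+\Ca\|(\Cor_{\infty,h}-\Cor_{\ell,h})\Pi_H u_h\|_V\|\ep-\epl\|_V$; interpolation error bounds and the discrete localization estimate produce the two summands $H\|f\|_\Omega$ and $\Cloc{\ell}\beta^\ell\k^\bstab\|f\|_\Omega$, which is~\eqref{e:errorloch}.

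The only genuine obstacle is a bookkeeping issue in transporting the decay proof to the discrete setting: one must ensure that the test function $\eta\psi-\Iloc(\IH(\eta\psi))$ lies in $W_h$ rather than only in $W$. This can be handled either by refining $V_h$ locally so that $\Iloc$ takes values in $V_h$, or by replacing $\eta\psi$ with its $V_h$-interpolant; since $\eta$ is piecewise affine on $\tri_H$ and thus on $\tri_h$, the additional perturbation is controlled by standard fine-scale approximation estimates and does not affect either the decay rate $\beta$ or the final error bound.
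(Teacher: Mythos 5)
Your proposal takes essentially the same route as the paper: restrict the entire construction and analysis of Sections~\ref{s:omd}--\ref{s:loc} to the fine space $V_h$, replace the continuous stability constant by the discrete one from Assumption~\ref{a:stabh}, and repair the single genuine obstacle---that $V_h$ is not closed under multiplication by the cut-off functions in the decay and localization proofs---by choosing the cut-offs piecewise affine on $\tri_H$ and replacing $\eta\psi$ by its fine-mesh nodal interpolant $\Ih(\eta\psi)$, which is exactly the paper's modification. The argument is correct and no substantive difference from the paper's proof remains.
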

\begin{proof}
The proof follows closely the analysis of Section~\ref{s:loc} and simply replaces the space $V$ by $V_h$ in the construction  of the method and its error analysis. Almost all arguments remain valid. The only technical issue is that the space $V_h$ is not closed under multiplication by cut-off functions used in the proofs of Theorem~\ref{t:exp}, Lemma~\ref{l:localization}, and Theorem~\ref{t:errorlocalization}. 
This requires minor modifications as they have already been applied successfully in previous papers \cite{MP14,HP12,HMP14}. 
To begin with, let all cut-off functions $\eta$ be replaced by their nodal interpolation $\Inodal \eta$ on the coarse mesh $\tri_H$. This may affect the constant in \eqref{e:cutoffH} but not the overall results. This choice shows that $\eta\psi$ is piecewise polynomial with respect to the fine mesh $\tri_h$ and can be approximated by nodal interpolation $\Ih(\eta\psi)$ on the same mesh in a stable way.  
One example where such a modification is required is \eqref{e:decay1} in the proof of Theorem~\ref{t:exp}. The modification causes an additional term that measures the distance of $\eta\psi$ to the finite element space $V_h$,
\begin{align}
   \|\nabla \psi\|_{\Omega\setminus\Omega_{T,\ell-3}}^2 
   &= \Re\, (\nabla\psi,\nabla\left(\eta\psi\right))_{\Omega} - \Re\, (\nabla\psi,\psi\nabla \eta)_{\Omega}\nonumber\\
   &\leq |\Re\, (\nabla\psi,\nabla\left(\Ih(\eta\psi)-\Iloc(\IH(\Ih(\eta\psi)))\right))_{\Omega}|\nonumber\\
   &\qquad +|\Re\,(\nabla\psi,\nabla\Iloc(\IH(\Ih(\eta\psi))))_{\Omega}| + \left|\Re\, (\nabla\psi,\psi\nabla \eta)_{\Omega}\right|\nonumber\\
  &\qquad + |\Re\,(\nabla\psi,\nabla\left(\eta\psi-\Ih(\eta\psi)\right))_{\Omega}|\nonumber\\
 & =: \tilde{M}_1+\tilde{M}_2+\tilde{M}_3+\tilde{M}_4.\label{e:decay1h}
\end{align}
The treatment of $\tilde{M}_1, \tilde{M}_2, \tilde{M}_3$ is very similar to the treatment of $M_1,M_2,M_3$ in the proof of Theorem~\ref{t:exp} and requires only the stability of $I_h$ on the space of piecewise polynomials. 
Since $I_h(\eta\psi)=\psi$ outside the support of $\nabla \eta$, $\tilde{M}_4$ can easily be bounded by
\begin{equation*}
 \tilde{M}_4\leq (1+\Cint\sqrt{\Col})\|H\nabla\eta\|_{L^\infty(\Omega)})\|\nabla\phi\|_{\Omega_{T,\ell-2}\setminus\Omega_{T,\ell-5}}^2 
\end{equation*}
and further arguments remain valid (with a possible change of the constants involved). 
The proofs of Lemma~\ref{l:localization} and Theorem~\ref{t:errorlocalization} can be modified in a similar way.
\end{proof}
\begin{remark}[true errors] Note that Theorem~\ref{e:errorloch} compares the discrete localized Petrov-Galerkin approximation  with the reference solution $u_h$ \eqref{e:modelh} only. An estimate of the full error reads
\begin{equation*}
\|u-\uplh\|_V\leq  C(H+\Cloc{\ell}\beta^{\ell}\k^\bstab)\|f\|_\Omega + \|u-u_h\|_V.
\end{equation*}
Further estimation of the reference error $\|u-u_h\|_V$ relies on additional regularity of the solution $u$ in the usual way. As with stability, quantitative results are rare \cite{melenk_phd,MelenkEsterhazy,hiptmair}
In the simplest case of a pure Robin problem in a convex domain, an estimate of the form
\begin{equation*}
\|u-\uplh\|_V\lesssim (H+\beta^{\ell}\k + h\k)\|f\|_\Omega
\end{equation*}
holds true in the regime where $h\approx\k^{-2}$ (cf. \cite{melenk_phd}). In this regime, the choices $H\approx\k^{-1}$ and $\ell\approx 2\log(\k)$ would balance the three term in the error bound and the overall error of the method would be proportional to $\k^{-1}$.  
\end{remark}
Using the properties of the very particular example of the previous remark, the complexity of our approach may be estimated as follows. The number of degrees of freedom in a single cell problem scales like $(\ell H/h)^d$. Although the cell problems are non-hermitian, they are coercive and linear complexity with respect to the number of degrees of freedom (independent of $\kappa$) can be expected for a suitably chosen multilevel preconditioned iterative solver. Let us assume that $m$ local problems need to be solved where $m$ depends on the geometric setting of the problem and the structuredness of the meshes. E.g.,  $m\approx \ell\approx \log(\kappa)$ for a uniform mesh on the unit square. Then the cost of pre-computing the bases up to a given accuracy and assembling the coarse problem is roughly $\mathcal{O}(\log(\kappa)^{d+1}(\kappa)^{d})$. This cost does not exceed the cost for solving the resulting $\mathcal{O}(\kappa^{d})$-dimensional coarse Helmholtz problem because this system faces the indefiniteness of the Helmholtz problem in the usual \cite{EG12,GGS15} which makes it difficult to design algebraic solvers of optimal complexity and, in particular, solvers that are robust with respect to large wave numbers. In general, the complexity of the method may as well be dominated by  the fine scale computation (depending on the stability problems of the original problem). In this case, the inherent independence of the corrections and their independence of the right-hand side and boundary data are interesting features of the method that can increase the efficiency of computations in the context of large-scale and inverse problems.

\section{Numerical Experiments}\label{s:numexp}
In this section we will present two numerical examples. We apply our method to model Helmholtz problems in one and two dimensions and compare the results with standard $P_1$ finite elements. We will demonstrate the validity of our estimates based on varying oversampling parameter $\ell$, coarse mesh size $H$ and by varying the wave number $\k$. A
comprehensive numerical study of the algorithmic ideas proposed in this paper
is topic of current and future research.

\subsection{Illustration of the theoretical results in $1d$}\label{ss:numexp1}
Let $\Omega:=(0,1)$, $\Gamma_R=\partial\Omega$ (solely Robin boundary condition), and let the right-hand side $f$ defined by
\begin{align}\label{e:numexp1}
f(x) &:= \begin{cases}
     2\sqrt{2},& x\in[\tfrac{3}{16},\tfrac{5}{16}]\cup[\tfrac{11}{16},\tfrac{13}{16}],\\
     0,& \text{elsewhere,}
       \end{cases}
\end{align}
represent two radiating sources. The right-hand side was normalized so that $\|f\|_{L^2(\Omega)}=1$. 

Note that this one-dimensional example does not serve as a proper benchmark for the method because there even exist local pollution-free generalized finite element methods. Actually, in $1d$ (and only in $1d$) the choice of the nodal interpolation for $I_H$ would have lead to such a method. Still, this model problem nicely reflects our theoretical results for a wide range of wave numbers. Since non of our arguments depends on the space dimension (though some constants do), the $1d$ performance truly illustrates the performance that can be observed also in higher dimensions. 

We consider the following values for the wave number, $\k=2^3,2^4,\ldots,2^7$. The numerical experiment aims to study the dependence between these wave numbers and the accuracy of the numerical method. Consider the equidistant coarse meshes with mesh widths $H=2^{-1},\ldots,2^{-10}$. The reference mesh $\tri_h$ is derived by uniform mesh refinement of the coarse meshes and has maximal mesh width $h=2^{-14}$. The corresponding $P_1$ conforming finite element approximation on the reference mesh $\tri_h$ is denoted by $V_h$. We consider the reference solution $u_h\in V_h$ of \eqref{e:modelh} with data given in \eqref{e:numexp1} and compare it with coarse scale approximations $\uplh\in \Vplh$ (cf. Definition~\ref{e:VclhGalerkin}) depending on the coarse mesh size $H$ and the oversampling parameter $\ell$.

The results are visualized in Figures~\ref{fig:numexp1H} and~\ref{fig:numexp1k}. Figure~\ref{fig:numexp1H_msfem} shows the relative energy errors $\frac{\|u_h-\uplh\|_V}{\|u_h\|_V}$ depending on the coarse mesh size $H$ for several choices of the wave number $\k=2^3,2^4,\ldots,2^7$. The oversampling parameter $\ell$ is tied to $H$ via the relation $\ell=\ell(H)=|\log_2 H|$. This choice seems to be sufficient to preserve optimal convergence as soon as $H\kappa\lesssim 1$ holds. The experimental rate of convergence $N_{\operatorname{dof}}^{3/(2d)}$ is better than predicted by Theorem~\ref{t:errorloch}. This effect is due to some unexploited $L^2$-orthogonality properties of the quasi-interpolation operator $\IH$; see \cite[Section 2]{MR1736895} and \cite[Remark 3.2]{MP14} for details. In the regime $H\kappa\lesssim 1$, the errors coincide to with those of the best approximation (with respect to the $V$-norm) of $u_h$ in the space $\Vplh$ depicted in Figure~\ref{fig:numexp1H_msfema}. 

We also show errors of the Petrov-Galerkin method based on the pairing $(V_H,\Vdlh)$ (the localized and fully discretized version of \eqref{e:Galerkinglobalstab}) in Figure~\ref{fig:numexp1H_msfemP1}. The stabilization via the precomputed test functions cures pollution and the errors are comparable to those of the best-approximation among the $P_1$-finite element functions\footnote{While the present paper was still under review, this observation has been justified theoretically in the follow-up paper \cite{Gallistl.Peterseim:2015}; see also \cite{Peterseim2015}.}, whereas the pollution effect is clearly visible for the standard conforming $P_1$-FEM (Galerkin) on the coarse meshes; see  Figure~\ref{fig:numexp1H_msfemP1a}. 
\begin{figure}[tb]
\begin{center}
\subfigure[\label{fig:numexp1H_msfem}Results for multiscale method \eqref{e:VclhGalerkin}.
]{\includegraphics[width=0.49\textwidth]{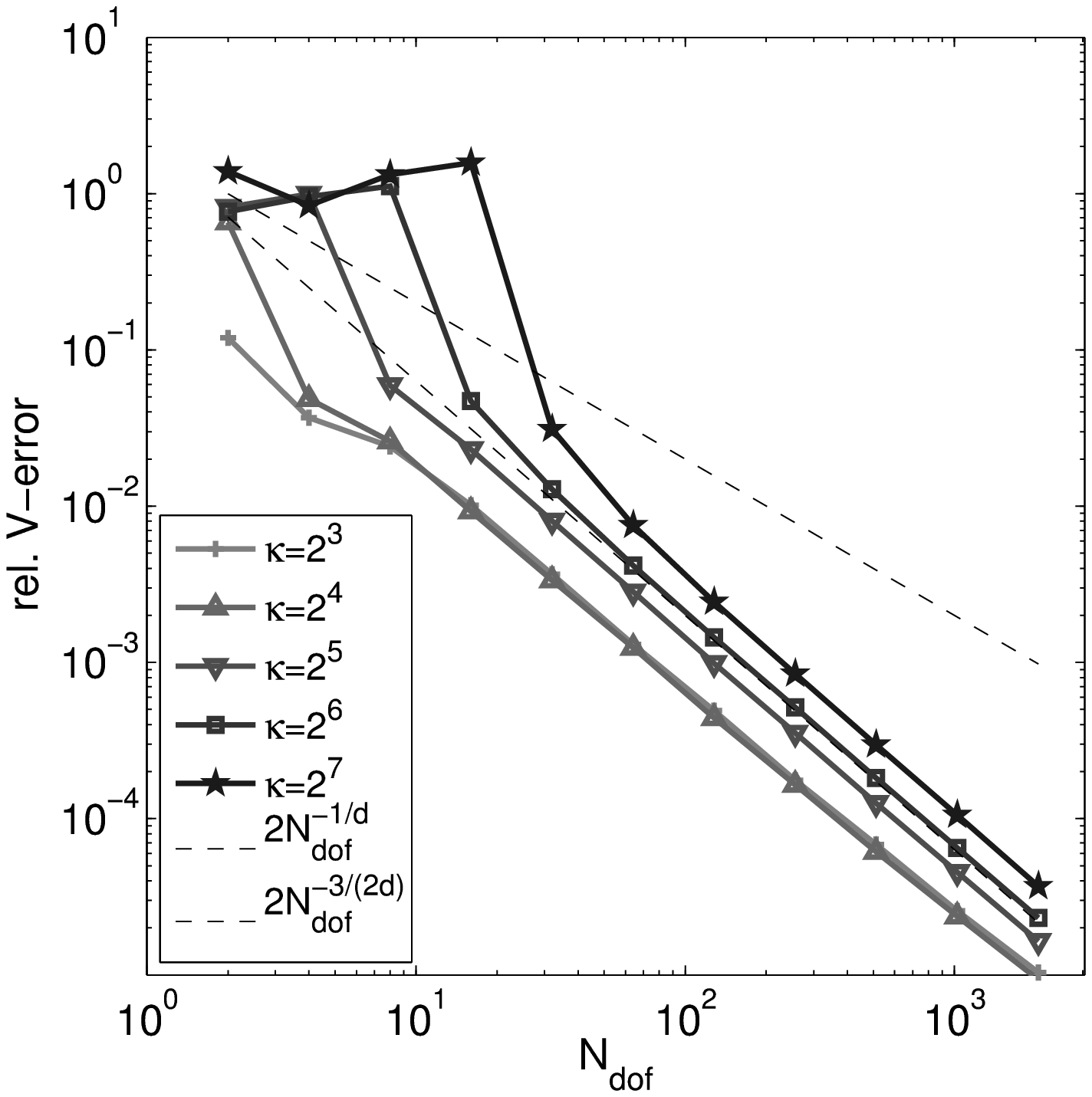}}
\subfigure[\label{fig:numexp1H_msfema}Results for $V$-best-approximation in $\Vplh$.
]{\includegraphics[width=0.49\textwidth]{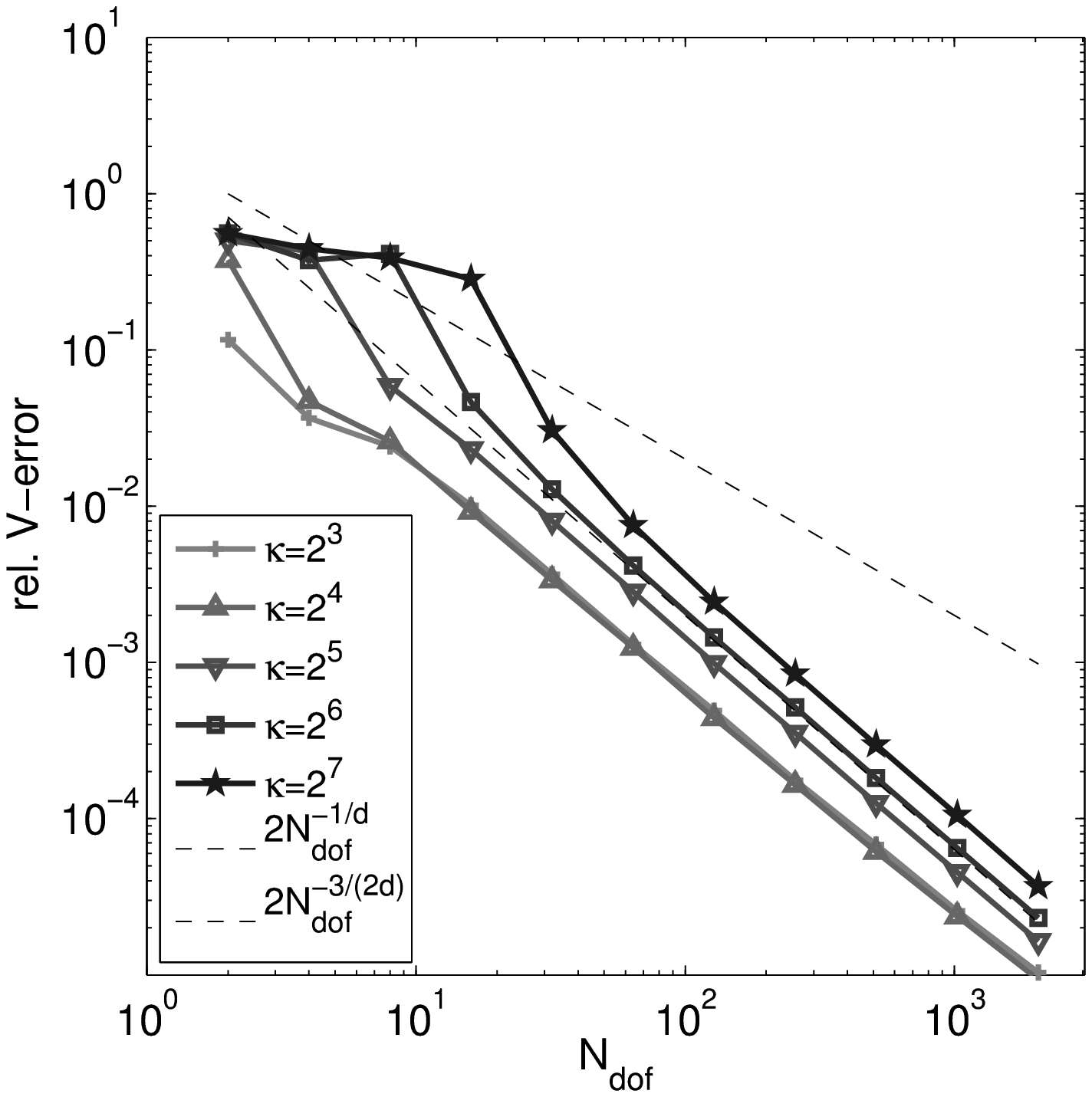}}\\
\subfigure[\label{fig:numexp1H_msfemP1}Results for multiscale Petrov-Galerkin method with trial space $V_H$ and test space $\Vdlh$.
]{\includegraphics[width=0.49\textwidth]{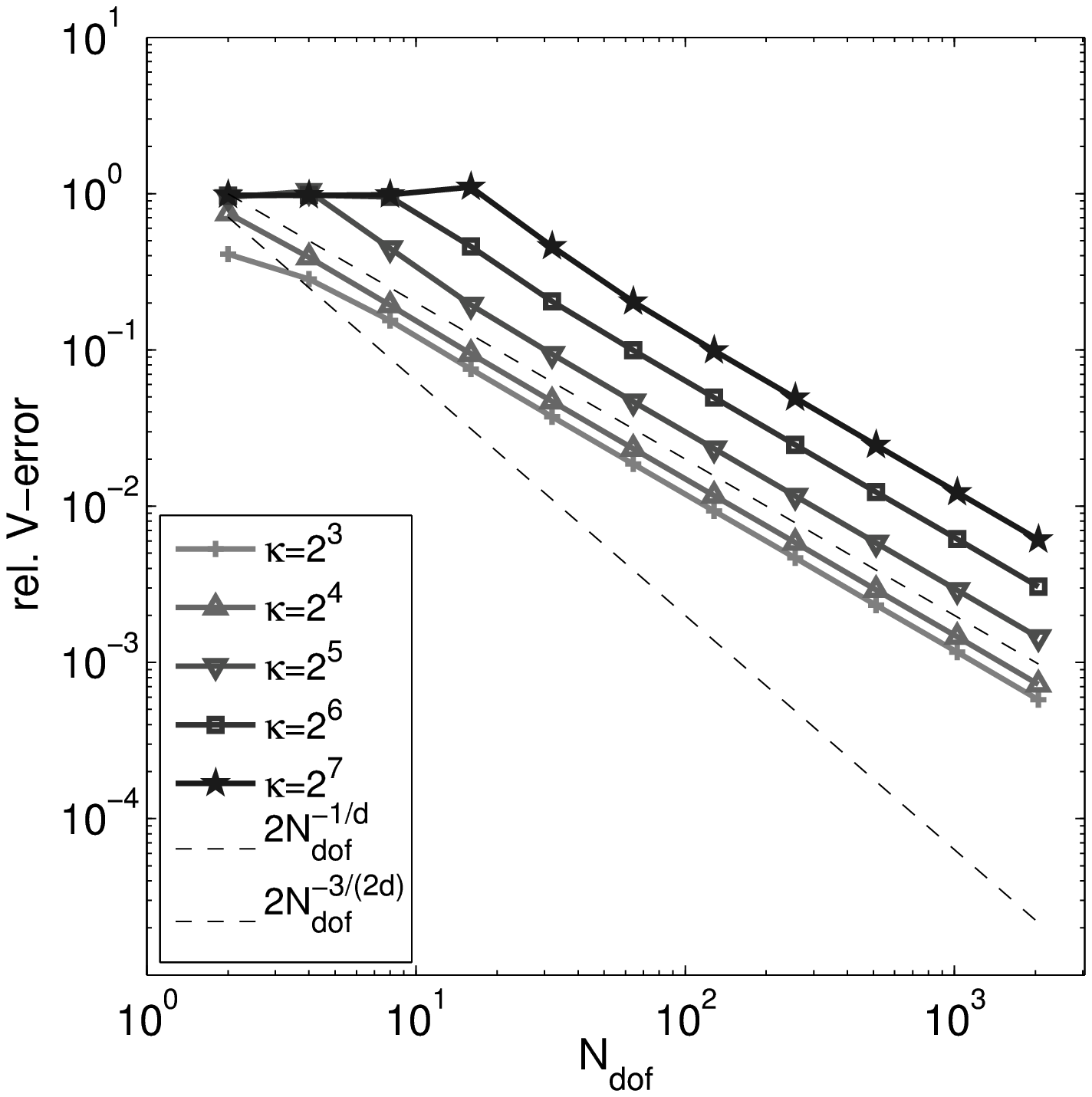}}
\subfigure[\label{fig:numexp1H_msfemP1a}Results for standard Galerkin in the space $V_H$.
]{\includegraphics[width=0.49\textwidth]{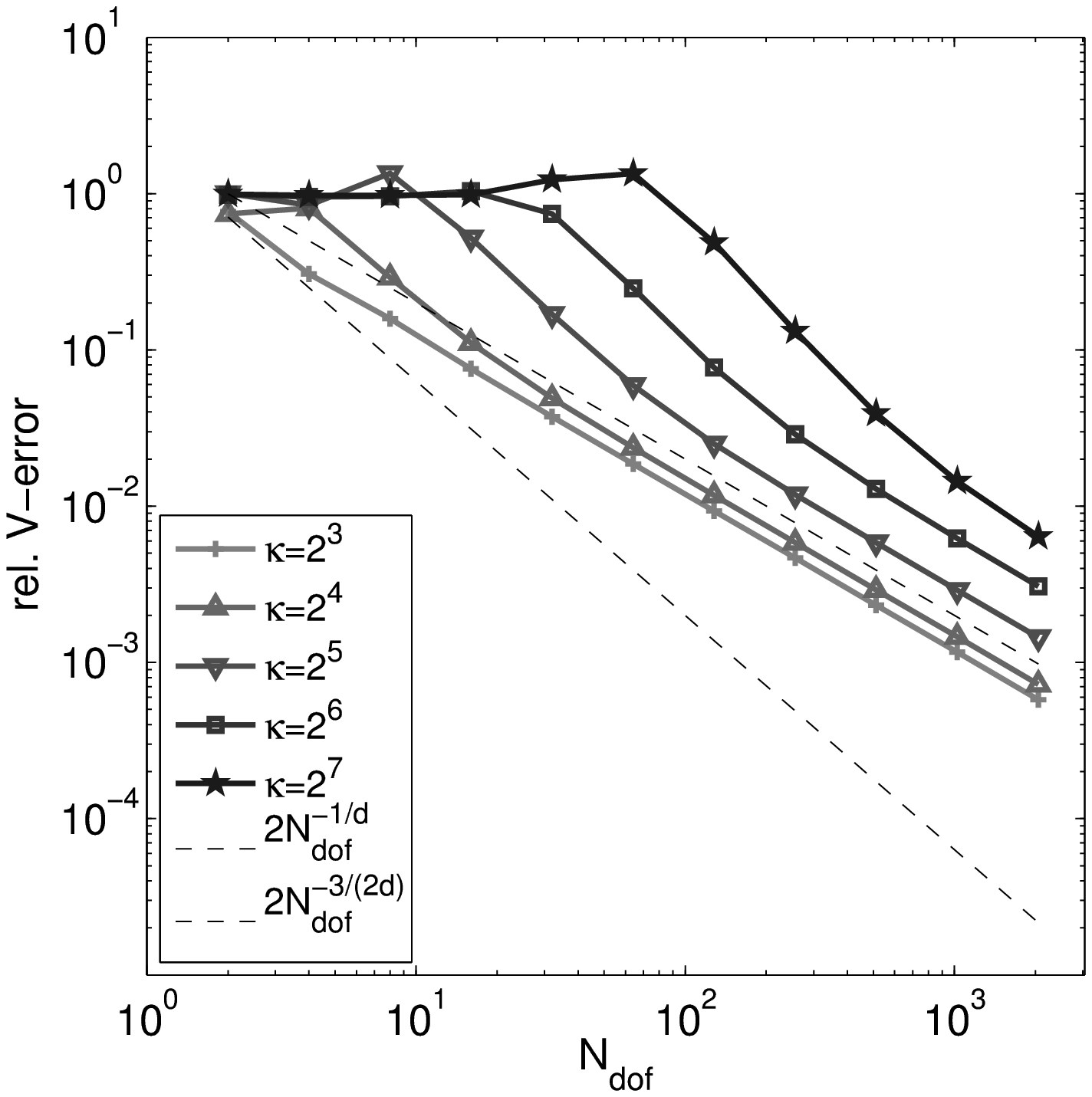}}
\end{center}
\caption{Numerical experiment of Section~\ref{ss:numexp1}: Results for the multiscale method \eqref{e:VclhGalerkin}, a modification based on the trial-test-pairing $(V_H,\Vdlh)$ and standard $P_1$-FEM with several choices of the wave number $\k$ depending on the uniform coarse mesh size $H=N_{\operatorname{dof}}^{-1}$. The reference mesh size $h=2^{-14}$ remains fixed. The oversampling parameter is tied to the coarse mesh size via the relation $\ell=|\log_2 H|$ in (a)-(c). \label{fig:numexp1H}}
\end{figure}

Figure~\ref{fig:numexp1k} aims to illustrate the role of the oversampling parameter. It depicts the relative energy errors $\frac{\|u_h-\uplh\|_V}{\|u_h\|_V}$ of the method \eqref{e:VclhGalerkin} and the best-approximation in $\Vplh$ depending on the coarse mesh size $H$ for fixed wave number $\k=2^7$ and several choices of the oversampling parameter $\ell=1,2,3,\ldots,8$. (We also show errors of the standard conforming $P_1$-FEM on the coarse meshes for comparison.) The exponential decay of the error with respect to $\ell$ is observed once the mesh size reaches the regime of resolution $H\k\lesssim 1$. Moreover, Figure~ \ref{fig:numexp1k_msfema} shows that, for fixed $\ell$, the approximation property of $\Vplh$ does not improve with decreasing $H$ and the oversampling parameter needs to be increased with decreasing $H$ to get any rate. By contrast, the Petrov-Galerkin method based on the trial-test-pairing $(V_H,\Vplh)$ (which in fact computes $\Pi_H u_h$ for $\ell\rightarrow\infty$) allows to reduce the oversampling parameter with decreasing $H\k$ until, for $H\k^2\approx1$, the correction can be removed because $P_1$-FEM becomes quasi-optimal; see Figure~ \ref{fig:numexp1k_msfemb} which depicts relative $L^2$-errors of the method.
\begin{figure}[tb]
\begin{center}
\subfigure[\label{fig:numexp1k_msfem}Results for multiscale method \eqref{e:VclhGalerkin}.
]{\includegraphics[width=0.49\textwidth]{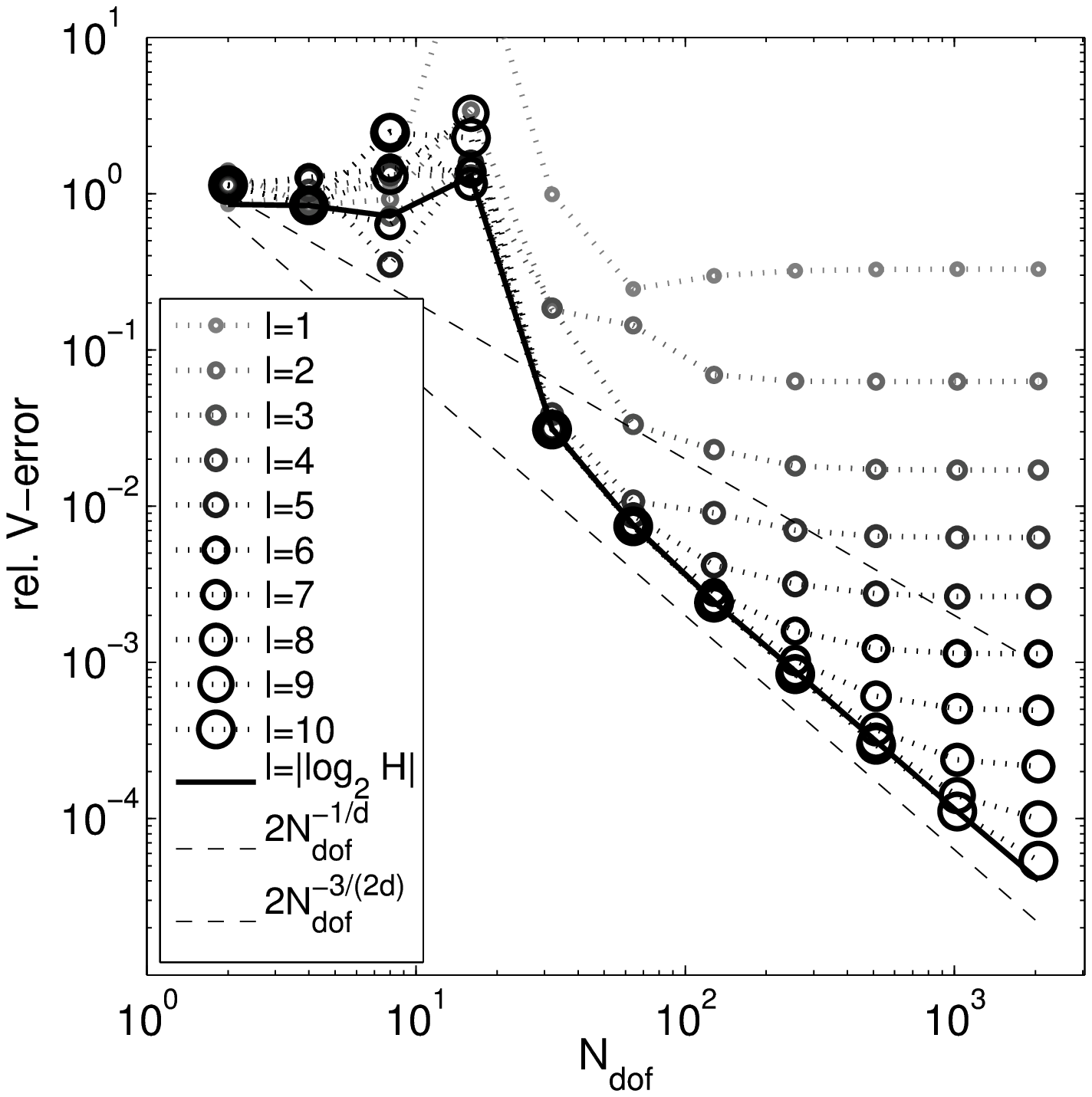}}
\subfigure[\label{fig:numexp1k_msfema}Results for $V$-best-approximation in $\Vplh$.
]{\includegraphics[width=0.49\textwidth]{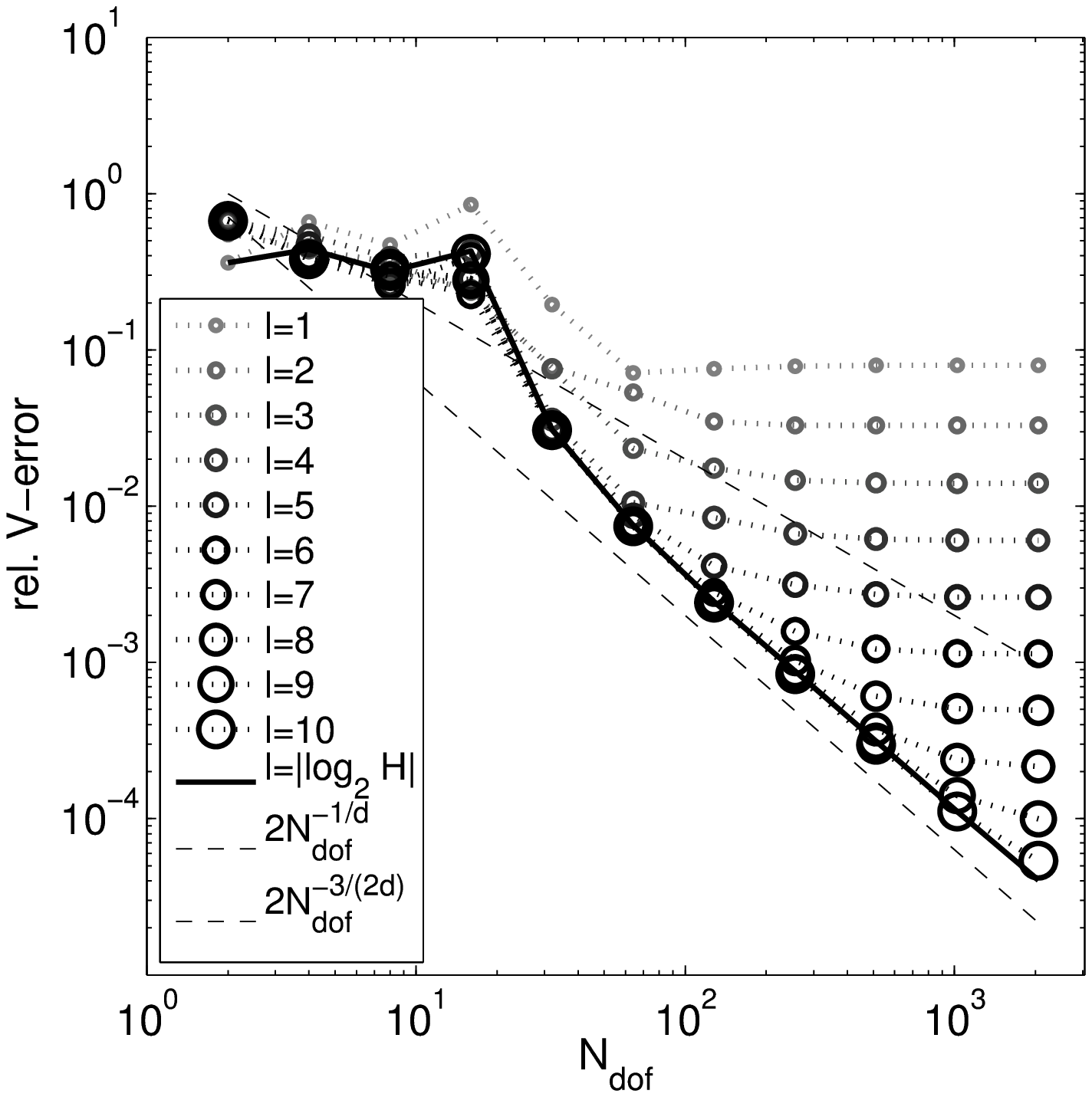}}
\subfigure[\label{fig:numexp1k_msfemb}Results for multiscale Petrov-Galerkin method with trial space $V_H$ and test space $\Vplh$.
]{\includegraphics[width=0.49\textwidth]{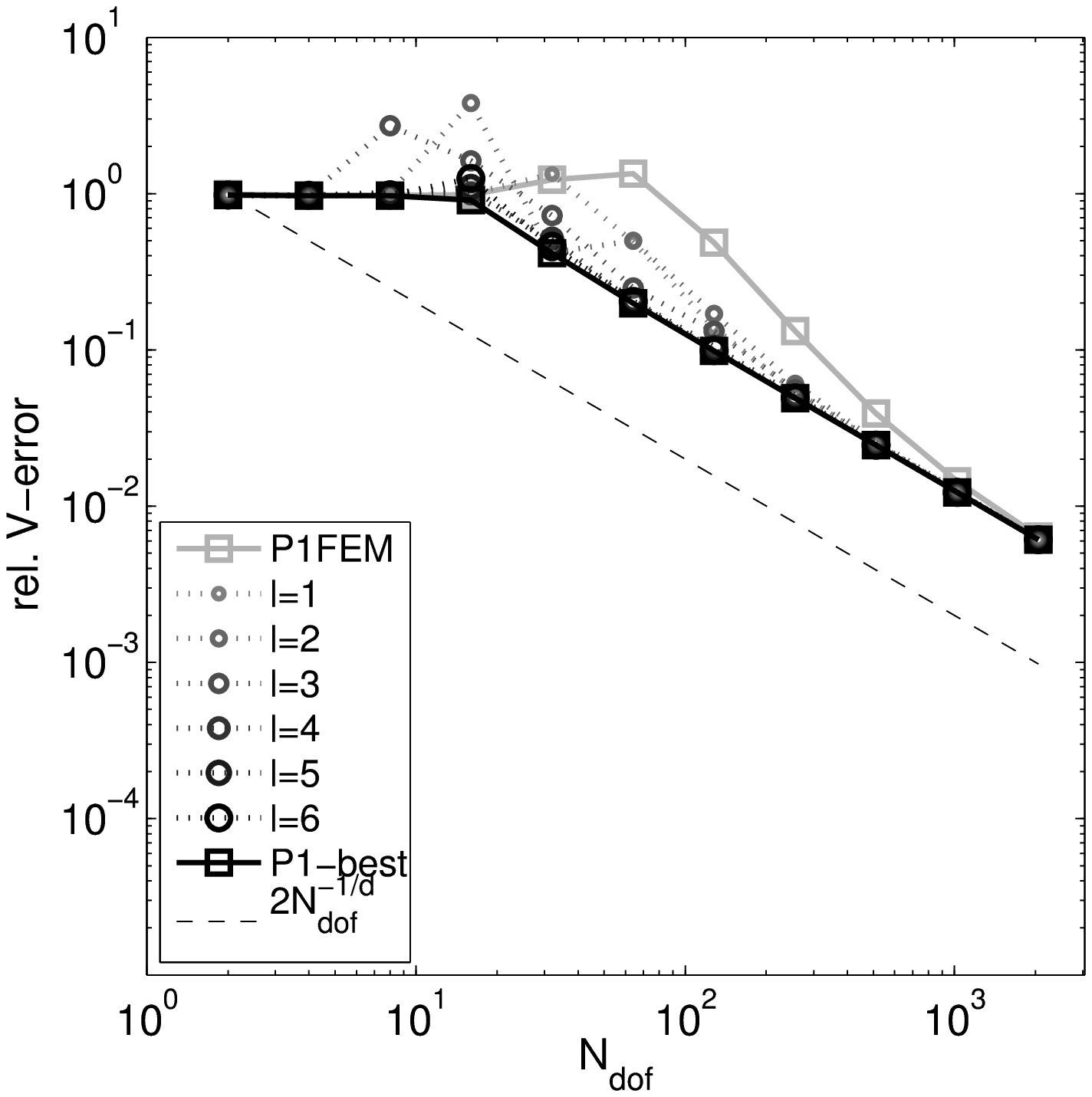}}
\end{center}
\caption{Numerical experiment of Section~\ref{ss:numexp1}: Results for multiscale method \eqref{e:VclhGalerkin} with wave number $\k=2^8$ depending on the uniform coarse mesh size $H=N_{\operatorname{dof}}^{-1}$. The reference mesh size $h=2^{-14}$ remains fixed. The oversampling parameter $\ell$ varies between $1$ and $10$. \label{fig:numexp1k}}
\end{figure}

Finally, we want to show that a different choice of interpolation operator in the definition \eqref{e:RH} of the remainder space can lead to very different practical performance (within the range of the theoretical predictions though). Figure~\ref{fig:numexp1kQ} shows the results for the above experiment when the operator $\mathcal{Q}_H$ from \eqref{e:QH} is used instead of $\IH$. It turns out that, for this example, the decay of the correctors is much faster so that the same accuracy is achieved with basis functions that are far more localized. A similar observation has been made previously in the context of high-contrast diffusion problems \cite{Brown.Peterseim:2014,Peterseim.Scheichl:2014}. This improved performance can not be quantified by the existing theory and requires further investigation.
\begin{figure}[tb]
\begin{center}
\subfigure[\label{fig:numexp1kQ_msfem}Results for multiscale method \eqref{e:VclhGalerkin} based on quasi-interpolation $\mathcal{Q}_H$.
]{\includegraphics[width=0.49\textwidth]{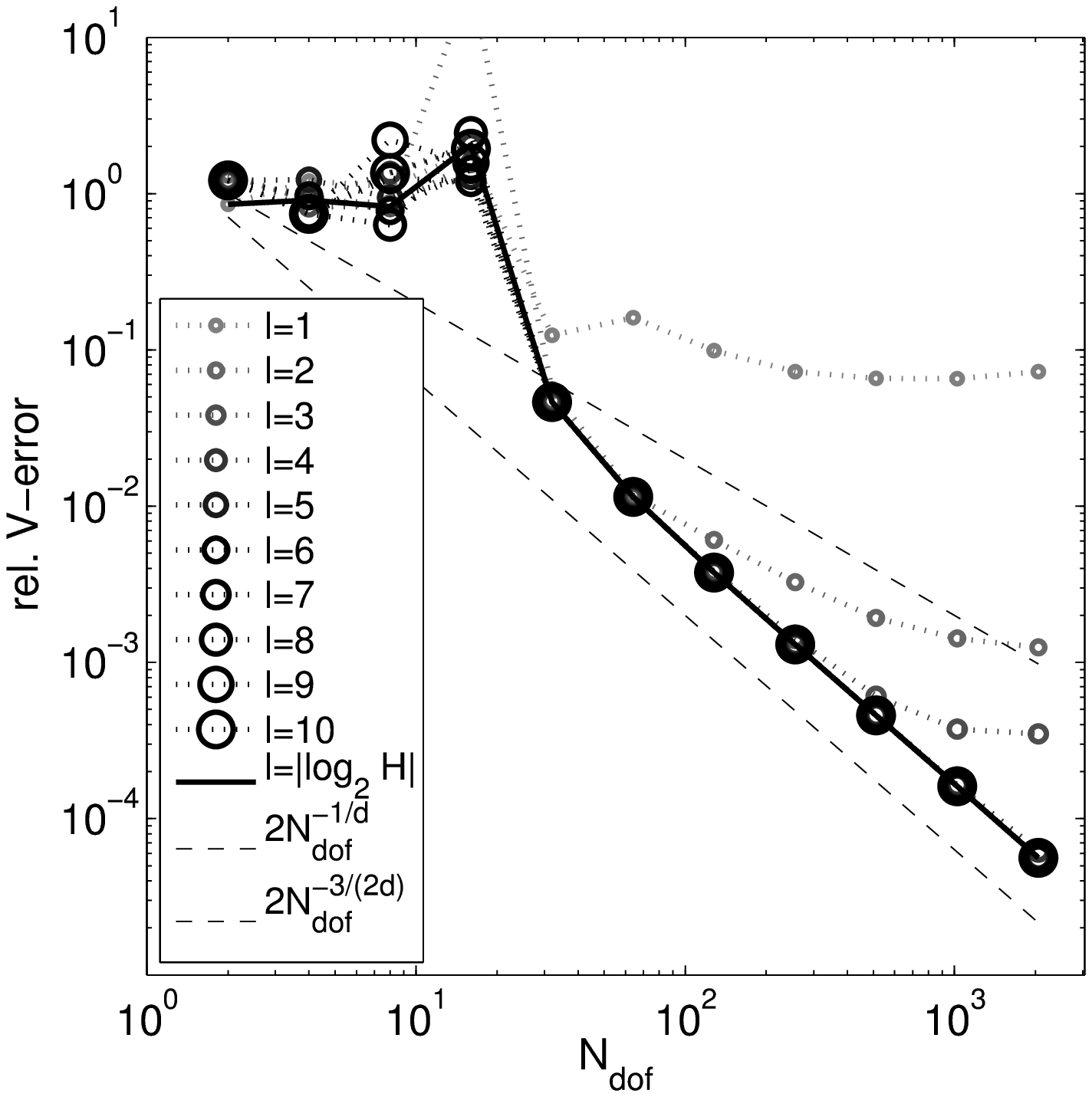}}
\subfigure[\label{fig:numexp1kQ_msfema}Results for $V$-best-approximation in $\Vplh$ based on quasi-interpolation $\mathcal{Q}_H$.
]{\includegraphics[width=0.49\textwidth]{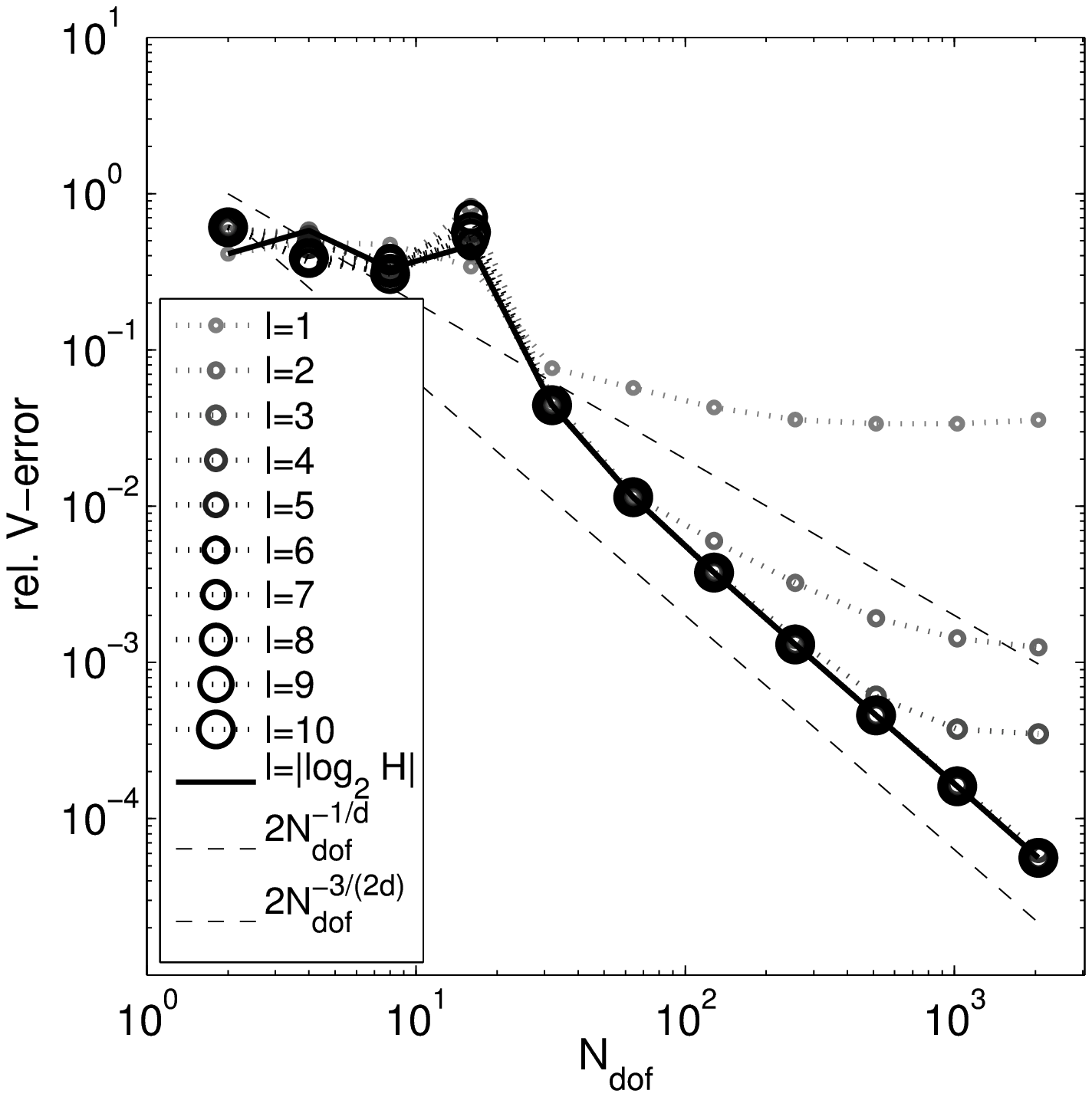}}
\subfigure[\label{fig:numexp1kQ_msfemb}Results for multiscale Petrov-Galerkin method with trial space $V_H$ and test space $\Vplh$ based on quasi-interpolation $\mathcal{Q}_H$.
]{\includegraphics[width=0.49\textwidth]{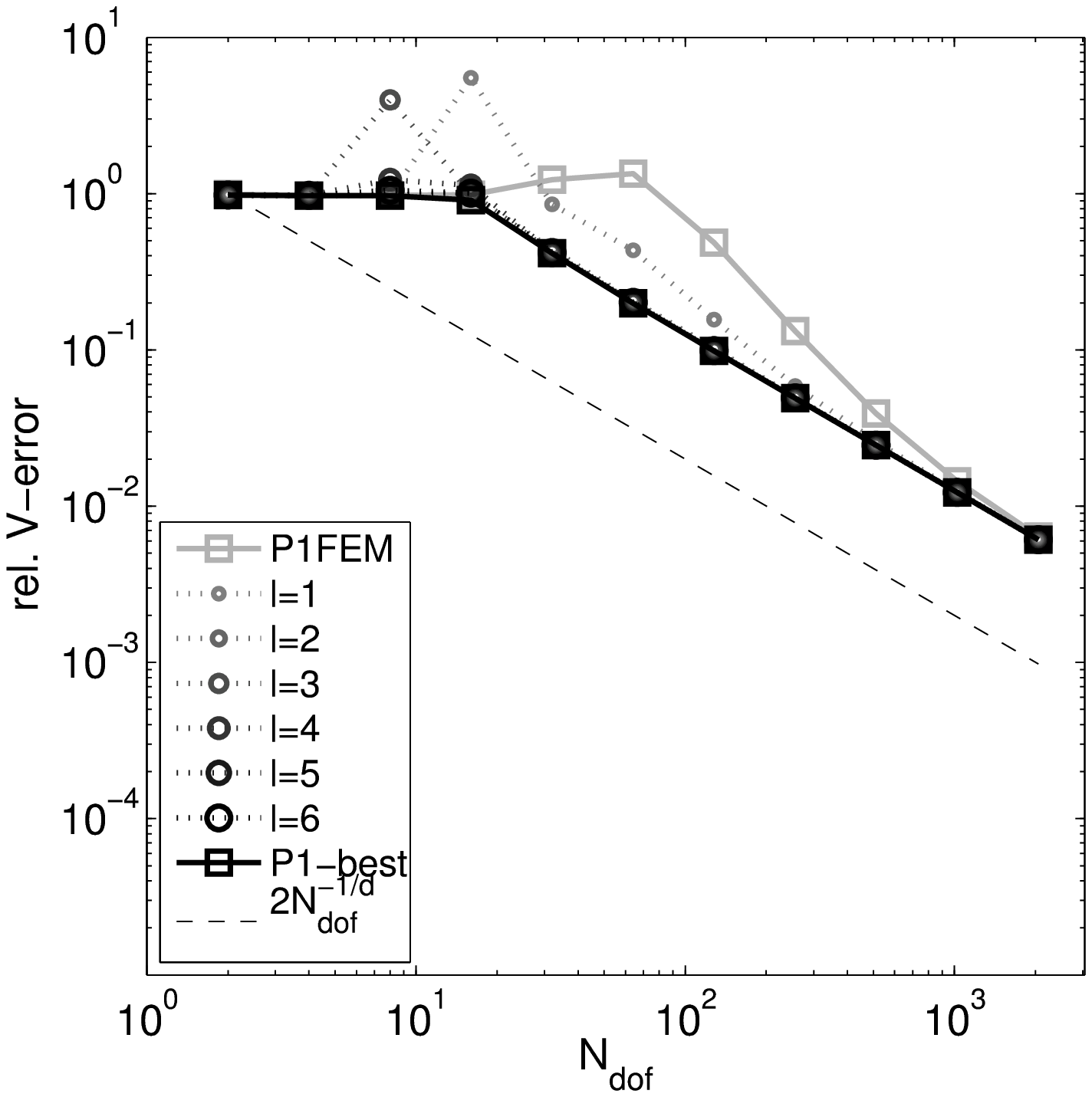}}
\end{center}
\caption{Numerical experiment of Section~\ref{ss:numexp1}: Results for multiscale method \eqref{e:VclhGalerkin} with interpolation operator $\mathcal{Q}_H$ for wave number $\k=2^8$ depending on the uniform coarse mesh size $H=N_{\operatorname{dof}}^{-1}$. The reference mesh size $h=2^{-14}$ remains fixed. The oversampling parameter $\ell$ varies between $1$ and $8$. \label{fig:numexp1kQ}}
\end{figure}

\subsection{Scattering from a triangle}\label{ss:numexp2}
The second experiment considers the scattering from sound-soft scatterer occupying the triangle $\Omega_D$. The Sommerfeld radiation condition of the scattered wave is approximated by the Robin boundary condition on the boundary $\Gamma_R:=\partial\Omega_R$ of the artificial domain $\Omega_R:=]0,1[^2$ so that $\Omega:=\Omega_R\setminus\Omega_D$ is the computational domain; see Figure~\ref{fig:numexp2domain1}. The incident wave $\displaystyle u_{inc}(x):=\exp\left(i\k\;x\cdot \left(\begin{smallmatrix}\cos(1/2)\\\sin(1/2)\end{smallmatrix}\right)\right)$ is prescribed via an inhomogeneous Dirichlet boundary condition on $\Gamma_D:=\partial\Omega_D$ and the scattered wave satisfies the model problem \eqref{e:modela} with the boundary conditions
\begin{align*}
  u &= -u_{inc}\quad\text{on }\Gamma_D,\\
  \nabla u\cdot \nu - i\k u &= 0\quad\text{on }\Gamma_R.
\end{align*}
The error analysis of the previous sections extends to this setting in a straight-forward way. By introducing some function $u_0\in W^{1,2}(\Omega)$ that satisfies the above boundary conditions, the problem can be reformulated with homogenous boundary conditions and the additional term $-a(u_0,v)$ on the right side of \eqref{e:modelvar}. This corresponds to having the modified right hand side $f+\Delta u_0+\k^2 u_0$ in the strong form \eqref{e:modela} of the problem. If $u_0$ can be chosen such that $\Delta u_0\in L^2(\Omega)$, then all error bounds of this paper remain valid. For weaker right hand sides the rates with respect to $H$ are reduced accordingly. Note, however, that the $L^2$-norm of the modified right-hand side may depend on $\k$ as it is the case in the present experiment where $u_0$ is related to the incident wave. The best-approximation properties of the method (cf. Remark~\ref{r:quasioptimal}) are not affected by this possible $\kappa$-dependence of the errors. 

The numerical experiment considers the following values for the wave number, $\k=2^2,2^3,2^4,2^5$, and aims to study the dependence between the wave numbers and the accuracy of the numerical method. We choose uniform coarse meshes with mesh widths $H=2^{-2},\ldots,2^{-5}$ as depicted in Figures~\ref{fig:numexp2domain2}--\ref{fig:numexp2domain3}. The experiment focuses on the role of localization and keeps the fine scale $h=2^{-9}$ fixed. This is roughly $\kappa^{-2}$ for the largest $\kappa$ considered in the experiment so that there is some confidence that the fine scale error is not dominant and that the plotted reference errors also reflect the true errors, at least in the targeted regime $H\approx\kappa^{-1}$. For numerical experiments with variable fine-scale parameter $h$ we refer to \cite{Gallistl.Peterseim:2015}. The reference mesh $\tri_h$ is derived by uniform mesh refinement of the coarse meshes and has mesh width $h=2^{-9}$. 

As in the previous experiment, we consider the reference solution $u_h\in V_h$ of \eqref{e:modelh} with the above data and compare it with coarse scale approximations $\uplh\in \Vplh$ (cf. Definition~\ref{e:VclhGalerkin}) depending on the coarse mesh size $H$ and the oversampling parameter $\ell$. Here, we are using again the canonical quasi-interpolation $\IH$. Figures~\ref{fig:numexp2H} and \ref{fig:numexp2k} show the results which conform to the theoretical predictions. If the oversampling parameter is chosen appropriately ($\ell=|\log_2 H|$) then pollution effects are eliminated for both the multiscale method \eqref{e:VclhGalerkin} and for the Petrov-Galerkin method based on the trial-test-pairing $(V_H,\Vplh)$ -- the localized and fully discretized version of the stabilized method \eqref{e:Galerkinglobalstab}. Moreover, the low regularity of the solution does not affect the convergence rates of the multiscale method \eqref{e:VclhGalerkin} when compared with the reference solution $u_h$, whereas reduced rates are observed for the Petrov-Galerkin method based on the trial-test-pairing $(V_H,\Vplh)$ as expected (due to the limited approximation properties of $P_1$ functions in the Sobolev spaces $W^{s,2}(\Omega)$ for $s<11/7$). The regularity of the solution, however, does affect the accuracy of the reference solution $u_h$ and, hence, limits the overall accuracy of our approximation. The possibility of automatic balancing the local fine scale errors of the corrector problems, the localization error, the global coarse error, and further errors due to quadrature and inexact algebraic solvers is a desirable feature of the method that needs to be addressed by future research.
\begin{figure}[tb]
\begin{center}
\subfigure[\label{fig:numexp2domain1}Computational domain $\Omega$ with scatterer $\Omega_D$.
]{\includegraphics[width=0.32\textwidth]{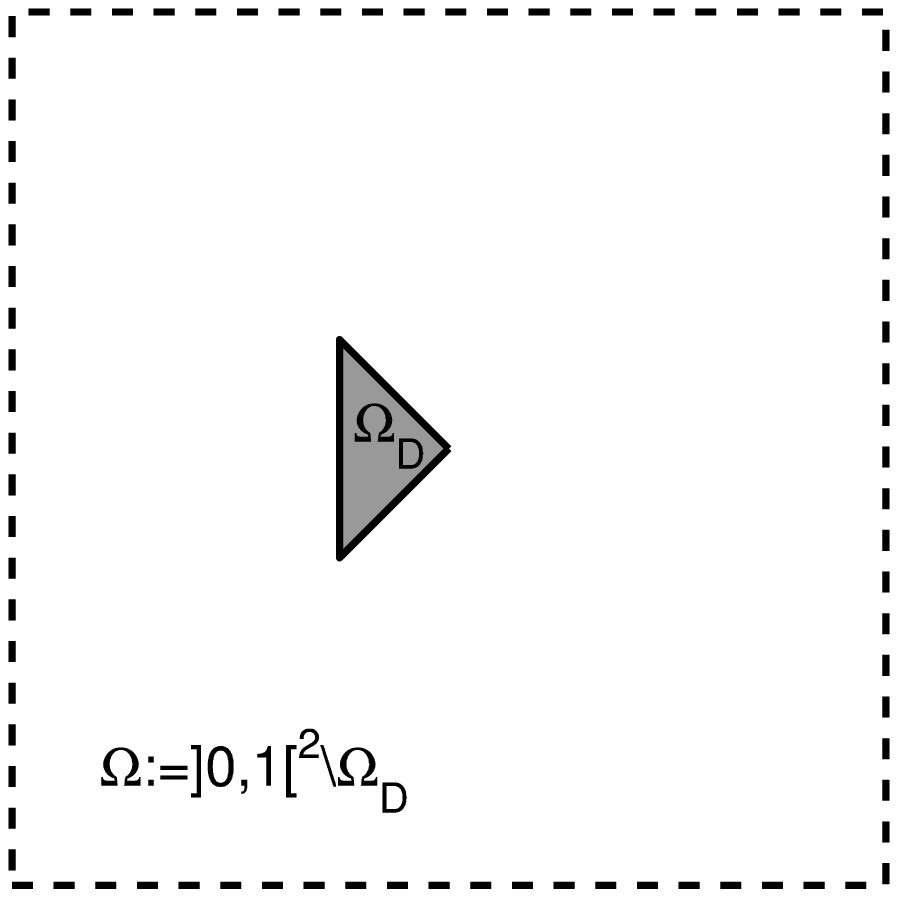}}
\subfigure[\label{fig:numexp2domain2}Initial coarse mesh.
]{\includegraphics[width=0.32\textwidth]{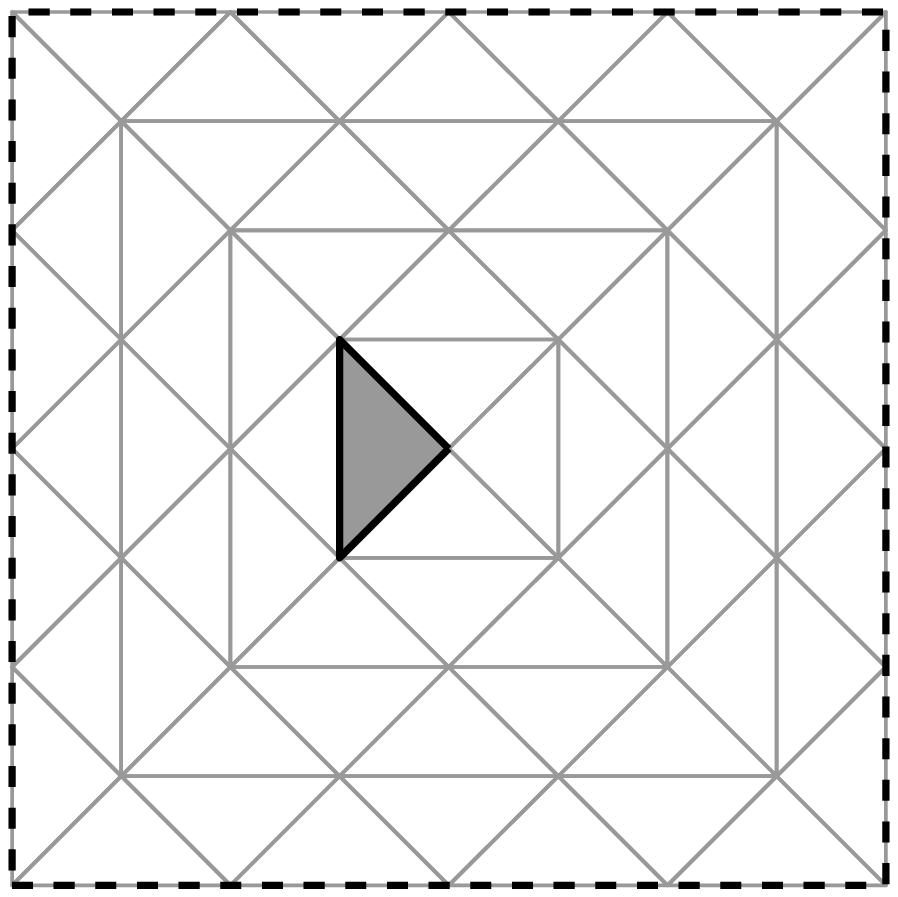}}
\subfigure[\label{fig:numexp2domain3}Uniformly refined coarse mesh.
]{\includegraphics[width=0.32\textwidth]{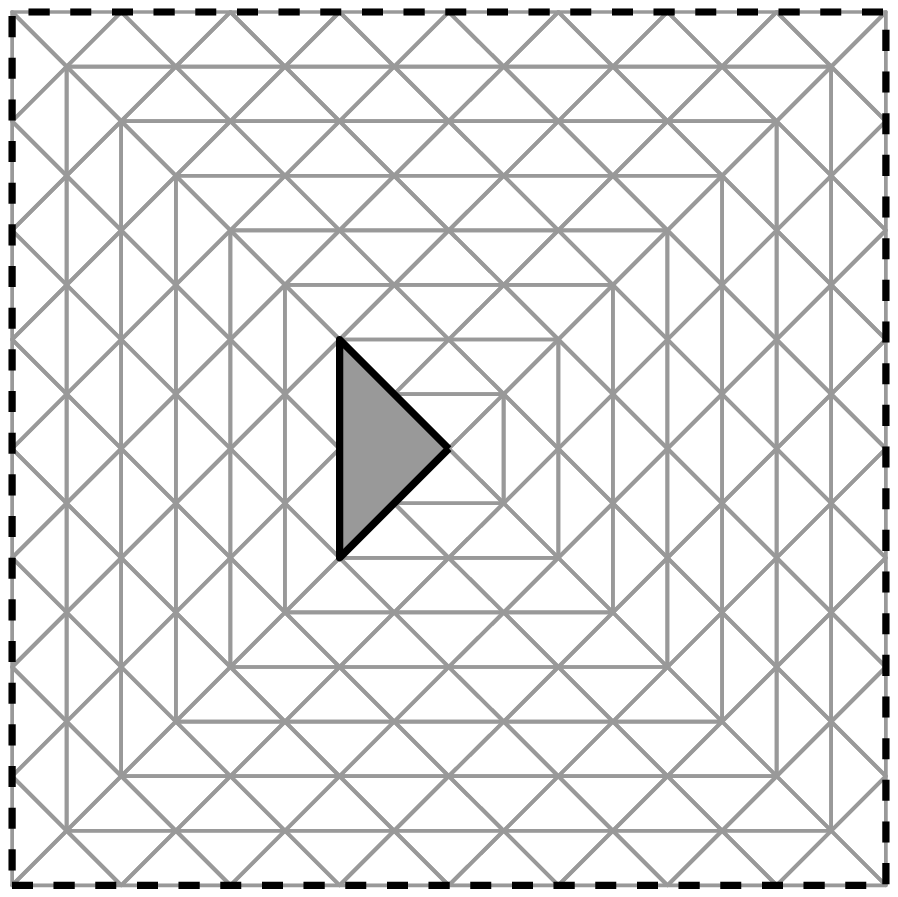}}
\end{center}
\caption{Computational domain of the model problem of Section~\ref{ss:numexp2} and corresponding coarse meshes. \label{fig:numexp2domain}}
\end{figure}

\begin{figure}[tb]
\begin{center}
\subfigure[\label{fig:numexp2H_msfem}Results for multiscale method \eqref{e:VclhGalerkin}.
]{\includegraphics[width=0.49\textwidth]{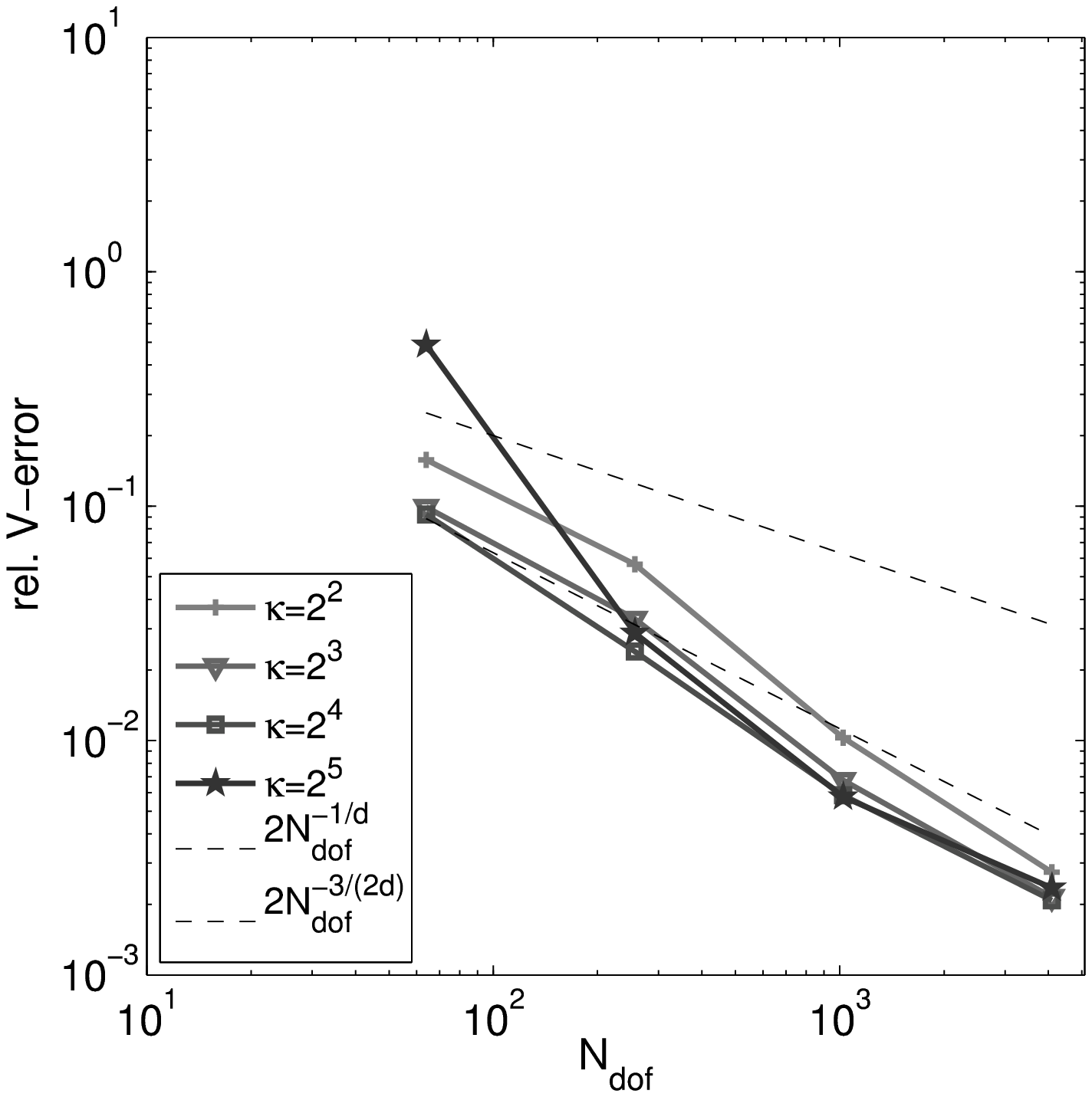}}
\subfigure[\label{fig:numexp2H_msfema}Results for $V$-best-approximation in $\Vplh$.
]{\includegraphics[width=0.49\textwidth]{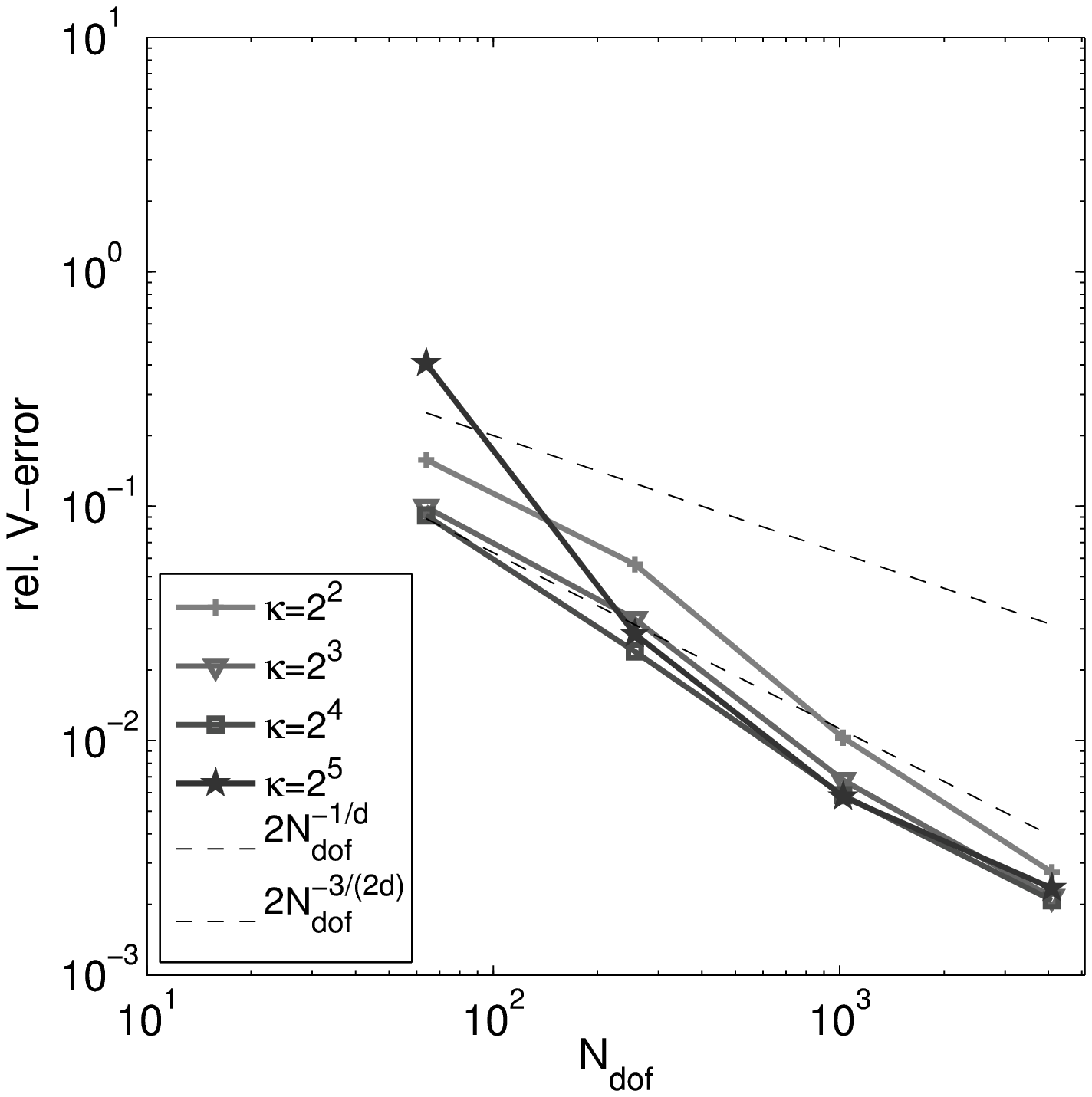}}\\
\subfigure[\label{fig:numexp2H_msfemP1}Results for multiscale Petrov-Galerkin method with trial space $V_H$ and test space $\Vdlh$.
]{\includegraphics[width=0.49\textwidth]{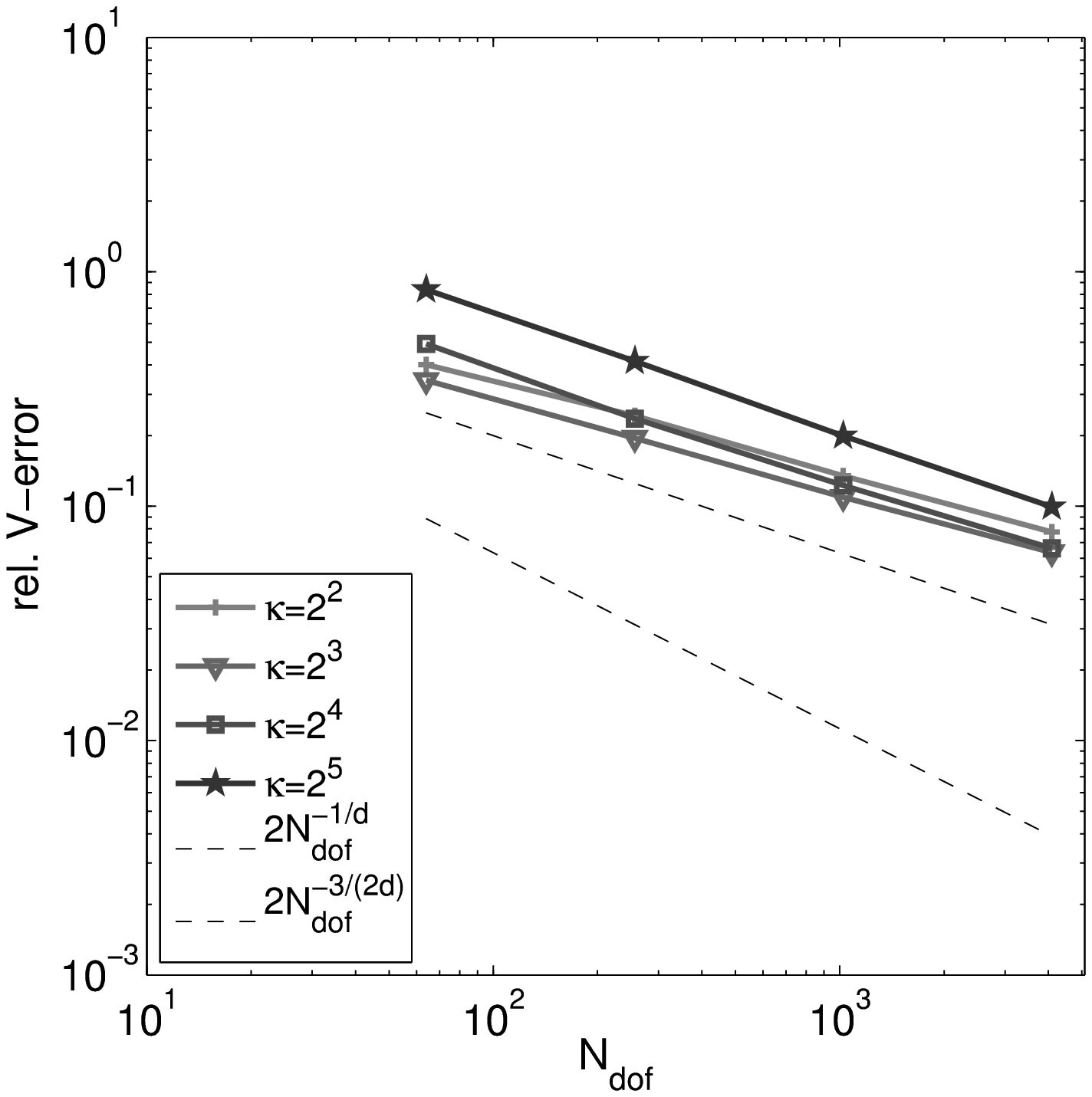}}
\subfigure[\label{fig:numexp2H_msfemP1a}Results for standard Galerkin in the space $V_H$.
]{\includegraphics[width=0.49\textwidth]{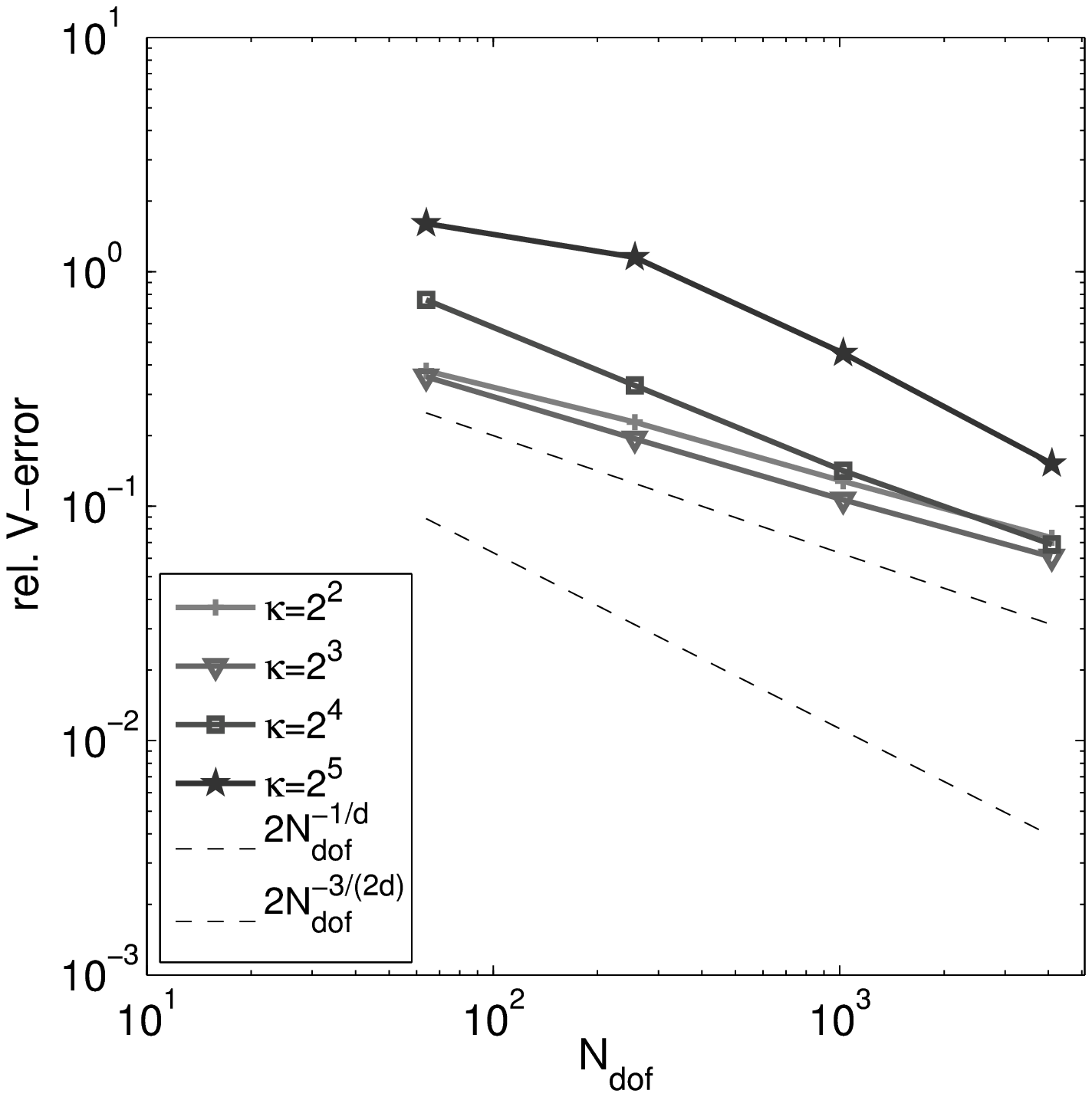}}
\end{center}
\caption{Numerical experiment of Section~\ref{ss:numexp2}: Results for the multiscale method \eqref{e:VclhGalerkin}, a modification based on the trial-test-pairing $(V_H,\Vdlh)$ and standard $P_1$-FEM with several choices of the wave number $\k$ depending on the uniform coarse mesh size $H=N_{\operatorname{dof}}^{-2}$. The reference mesh size $h=2^{-9}$ remains fixed. The oversampling parameter is tied to the coarse mesh size via the relation $\ell=|\log_2 H|$ in (a)-(c). \label{fig:numexp2H}}
\end{figure}

\begin{figure}[tb]
\begin{center}
\subfigure[\label{fig:numexp2k_msfem}Results for multiscale method \eqref{e:VclhGalerkin}.
]{\includegraphics[width=0.49\textwidth]{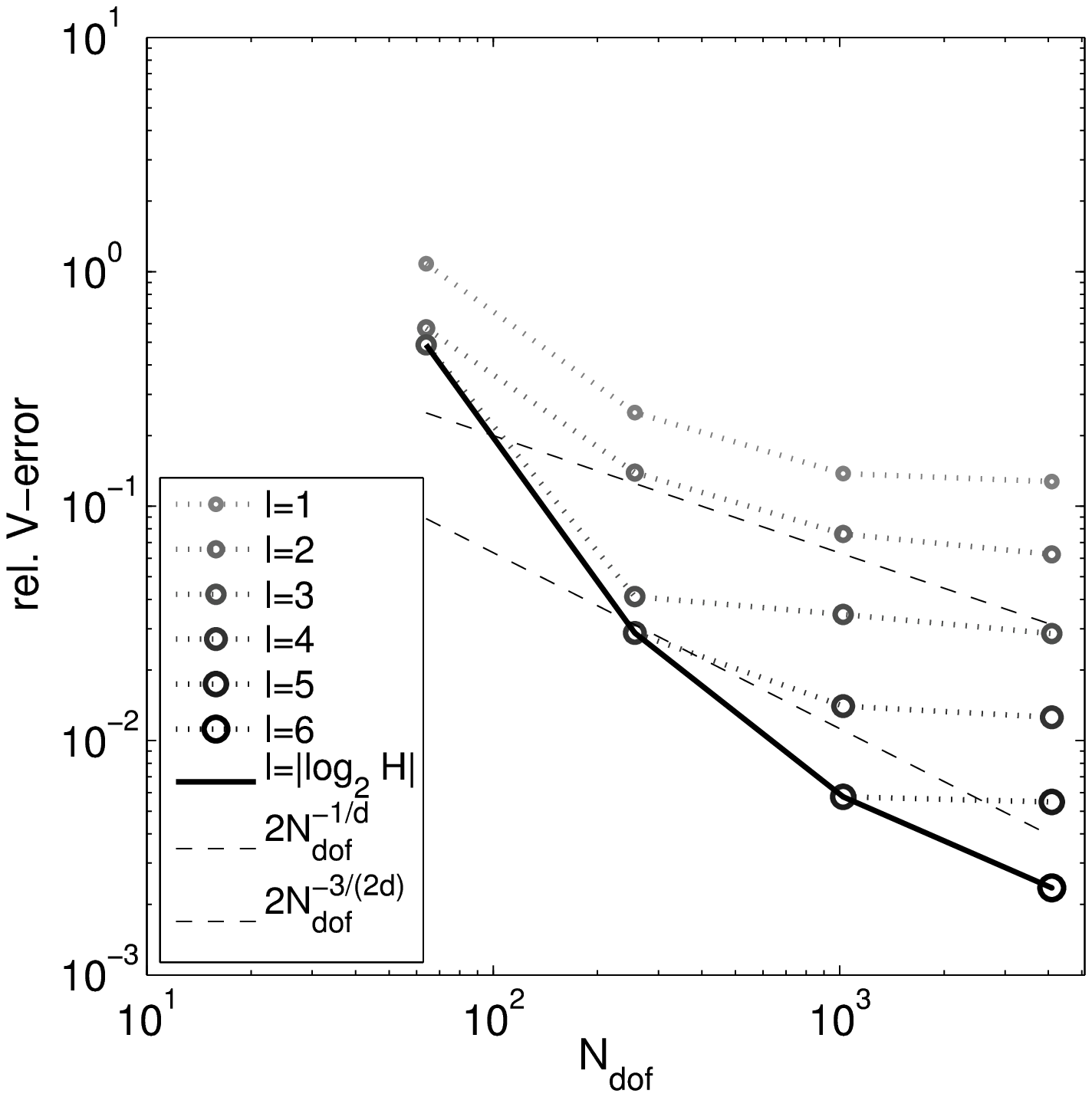}}
\subfigure[\label{fig:numexp2k_msfema}Results for $V$-best-approximation in $\Vplh$.
]{\includegraphics[width=0.49\textwidth]{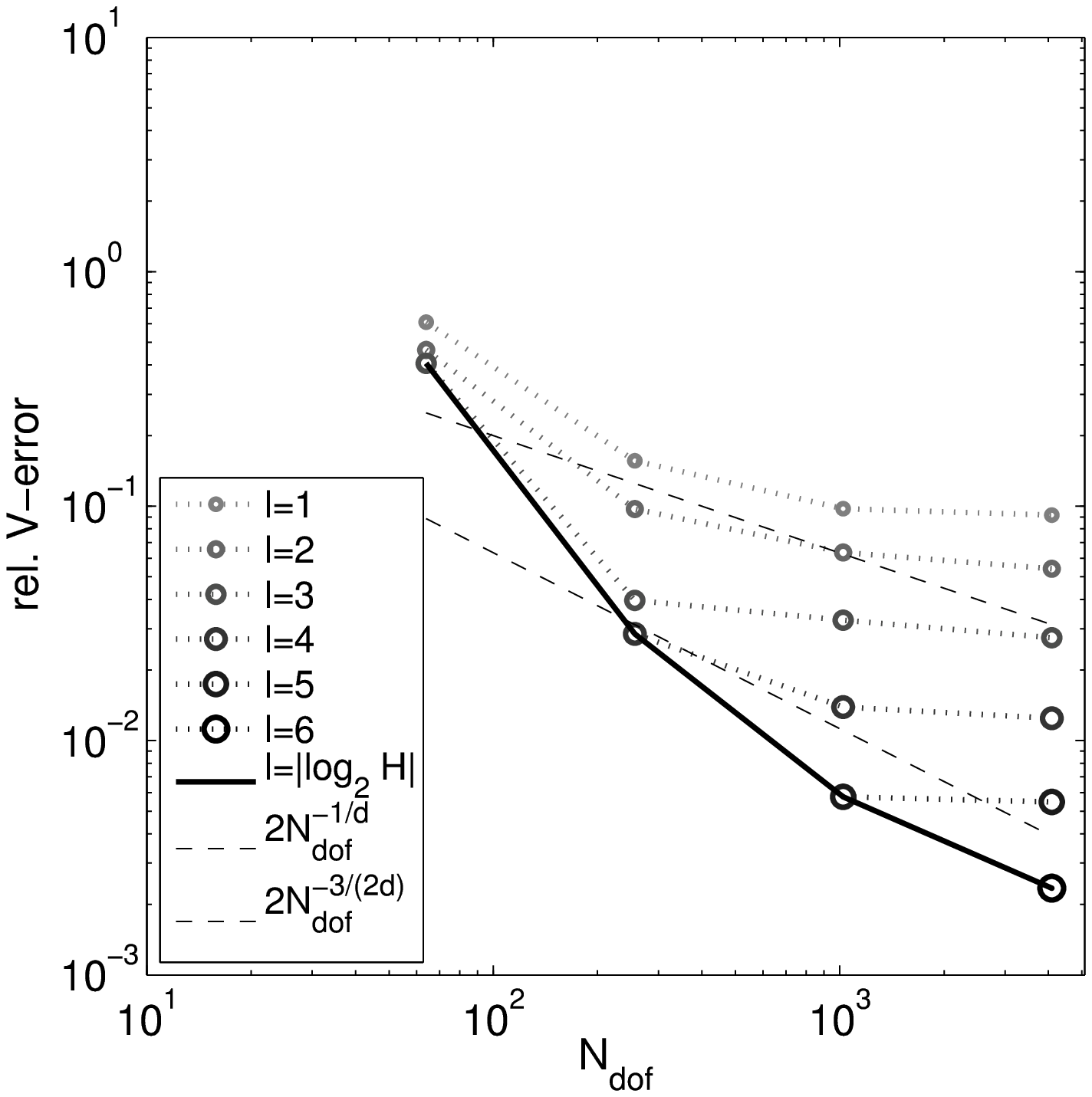}}
\subfigure[\label{fig:numexp2k_msfemb}Results for multiscale Petrov-Galerkin method with trial space $V_H$ and test space $\Vdlh$.
]{\includegraphics[width=0.49\textwidth]{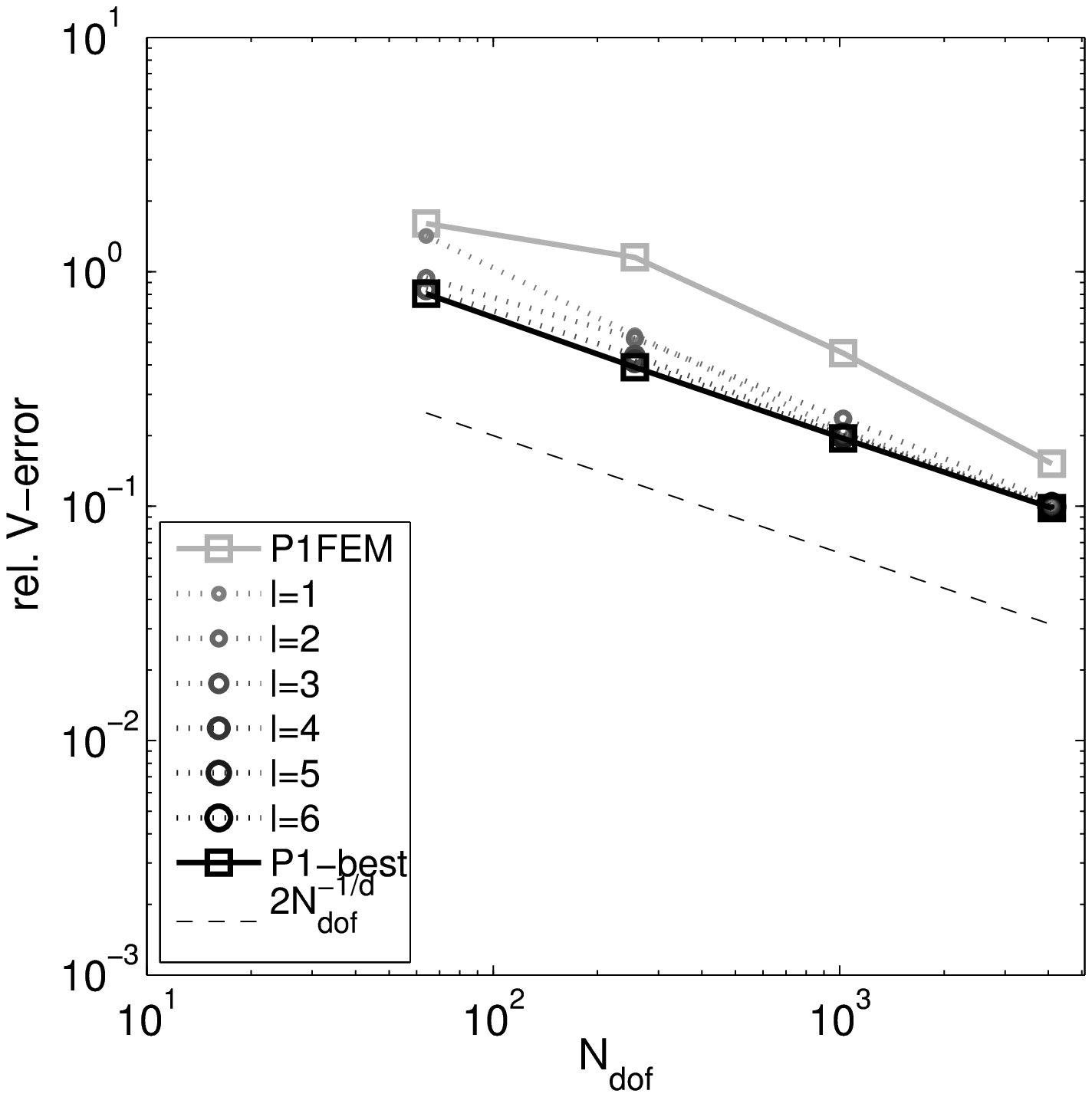}}
\end{center}
\caption{Numerical experiment of Section~\ref{ss:numexp2}: Results for multiscale method \eqref{e:VclhGalerkin} with wave number $\k=2^5$ depending on the uniform coarse mesh size $H=N_{\operatorname{dof}}^{-2}$. The reference mesh size $h=2^{-9}$ remains fixed. The oversampling parameter $\ell$ varies between $1$ and $10$. \label{fig:numexp2k}}
\end{figure}

\newcommand{\etalchar}[1]{$^{#1}$}
\def\cftil#1{\ifmmode\setbox7\hbox{$\accent"5E#1$}\else
  \setbox7\hbox{\accent"5E#1}\penalty 10000\relax\fi\raise 1\ht7
  \hbox{\lower1.15ex\hbox to 1\wd7{\hss\accent"7E\hss}}\penalty 10000
  \hskip-1\wd7\penalty 10000\box7}


\begin{thebibliography}{BCWG{\etalchar{+}}11}

\bibitem[AB14]{MR3247814}
A.~Abdulle and Y.~Bai.
\newblock Reduced-order modelling numerical homogenization.
\newblock {\em Philos. Trans. R. Soc. Lond. Ser. A Math. Phys. Eng. Sci.},
  372(2021):20130388, 23, 2014.

\bibitem[BCWG{\etalchar{+}}11]{betcke}
T.~Betcke, S.~N. Chandler-Wilde, I.~G. Graham, S.~Langdon, and M.~Lindner.
\newblock Condition number estimates for combined potential integral operators
  in acoustics and their boundary element discretisation.
\newblock {\em Numer. Methods Partial Differential Equations}, 27(1):31--69,
  2011.

\bibitem[BP14]{Brown.Peterseim:2014}
D.~{Brown} and D.~{Peterseim}.
\newblock A multiscale method for porous microstructures.
\newblock {\em ArXiv e-prints}, November 2014.

\bibitem[BS00]{BabSau}
I.~M. Babu{\v{s}}ka and S.~A. Sauter.
\newblock Is the pollution effect of the {FEM} avoidable for the {H}elmholtz
  equation considering high wave numbers?
\newblock {\em SIAM Rev.}, 42(3):451--484 (electronic), 2000.
\newblock Reprint of SIAM J. Numer. Anal. {{\bf{3}}4} (1997), no. 6, 2392--2423
  [ MR1480387 (99b:65135)].

\bibitem[BS07]{banjaisauterref}
L.~Banjai and S.~Sauter.
\newblock A refined {G}alerkin error and stability analysis for highly
  indefinite variational problems.
\newblock {\em SIAM J. Numer. Anal.}, 45(1):37--53 (electronic), 2007.

\bibitem[BS08]{MR2373954}
S.~C. Brenner and L.~R. Scott.
\newblock {\em The mathematical theory of finite element methods}, volume~15 of
  {\em Texts in Applied Mathematics}.
\newblock Springer, New York, third edition, 2008.

\bibitem[BY14]{yserentant}
R.~E. Bank and H. Yserentant.
\newblock On the {$H^1$}-stability of the {$L_2$}-projection onto finite
  element spaces.
\newblock {\em Numer. Math.}, 126(2):361--381, 2014.

\bibitem[Car99]{MR1736895}
C.~Carstensen.
\newblock Quasi-interpolation and a posteriori error analysis in finite element
  methods.
\newblock {\em M2AN Math. Model. Numer. Anal.}, 33(6):1187--1202, 1999.

\bibitem[CF00]{MR1807259}
C.~Carstensen and S.~A. Funken.
\newblock Constants in {C}l\'ement-interpolation error and residual based a
  posteriori error estimates in finite element methods.
\newblock {\em East-West J. Numer. Math.}, 8(3):153--175, 2000.

\bibitem[CF06]{feng}
P. Cummings and X. Feng.
\newblock Sharp regularity coefficient estimates for complex-valued acoustic
  and elastic {H}elmholtz equations.
\newblock {\em Math. Models Methods Appl. Sci.}, 16(1):139--160, 2006.

\bibitem[Cia78]{CiarletPb}
P.~G. Ciarlet.
\newblock {\em The finite element method for elliptic problems}.
\newblock North-Holland Publishing Co., Amsterdam-New York-Oxford, 1978.
\newblock Studies in Mathematics and its Applications, Vol. 4.

\bibitem[CV99]{MR1706735}
C.~Carstensen and R.~Verf{\"u}rth.
\newblock Edge residuals dominate a posteriori error estimates for low order
  finite element methods.
\newblock {\em SIAM J. Numer. Anal.}, 36(5):1571--1587 (electronic), 1999.

\bibitem[DGMZ12]{dpg}
L.~Demkowicz, J.~Gopalakrishnan, I.~Muga, and J.~Zitelli.
\newblock Wavenumber explicit analysis of a {DPG} method for the
  multidimensional {H}elmholtz equation.
\newblock {\em Comput. Methods Appl. Mech. Engrg.}, 213/216:126--138, 2012.

\bibitem[DPE12]{ern}
D.~A. Di~Pietro and A. Ern.
\newblock {\em Mathematical aspects of discontinuous {G}alerkin methods},
  volume~69 of {\em Math\'ematiques \& Applications (Berlin) [Mathematics \&
  Applications]}.
\newblock Springer, Heidelberg, 2012.

\bibitem[EG12]{EG12}
O.~G.~Ernst and M.~J.~Gander.
\newblock Why it is difficult to solve {H}elmholtz problems with
              classical iterative methods.
\newblock In {\em Numerical analysis of multiscale problems}, volume~83 of {\em
  Lect. Notes Comput. Sci. Eng.}, pages 325--362. Springer, Heidelberg, 2012.
		
\bibitem[EGMP13]{EGMP13}
D.~{Elfverson}, E.~H. {Georgoulis}, A.~{M{\aa}lqvist}, and D.~{Peterseim}.
\newblock Convergence of a discontinuous galerkin multiscale method.
\newblock {\em SIAM Journal on Numerical Analysis}, 51(6):3351--3372, 2013.

\bibitem[EM12]{MelenkEsterhazy}
S.~Esterhazy and J.~M. Melenk.
\newblock On stability of discretizations of the {H}elmholtz equation.
\newblock In {\em Numerical analysis of multiscale problems}, volume~83 of {\em
  Lect. Notes Comput. Sci. Eng.}, pages 285--324. Springer, Heidelberg, 2012.

\bibitem[FW09]{MR2551150}
X. Feng and H. Wu.
\newblock Discontinuous {G}alerkin methods for the {H}elmholtz equation with
  large wave number.
\newblock {\em SIAM J. Numer. Anal.}, 47(4):2872--2896, 2009.

\bibitem[FW11]{MR2813347}
X. Feng and H. Wu.
\newblock {$hp$}-discontinuous {G}alerkin methods for the {H}elmholtz equation
  with large wave number.
\newblock {\em Math. Comp.}, 80(276):1997--2024, 2011.

\bibitem[For77]{Fortin}
M. Fortin.
\newblock An analysis of the convergence of mixed finite element methods.
\newblock {\em RAIRO Anal. Num\'er.}, 11(4):341--354, iii, 1977.

\bibitem[GP15]{Gallistl.Peterseim:2015}
D.~Gallistl and D.~Peterseim.
\newblock Stable multiscale {P}etrov-{G}alerkin finite element method for high
  frequency acoustic scattering.
\newblock {\em Computer Methods in Applied Mechanics and Engineering}, 295:1--17, 2015.

\bibitem[GGS15]{GGS15}
M.~J.~Gander, I.~G.~Graham and E.~A.~Spence.
\newblock Applying GMRES to the Helmholtz equation with shifted Laplacian preconditioning: what is the largest shift for which wavenumber-independent convergence is guaranteed?
\newblock{\em Numer. Math.} 31(3):567--614, 2015.

\bibitem[HMP14a]{HMP14}
P.~{Henning}, P.~{Morgenstern}, and D.~{Peterseim}.
\newblock {Multiscale Partition of Unity}.
\newblock In M.~Griebel and M.~A. Schweitzer, editors, {\em Meshfree Methods
  for Partial Differential Equations {VII}}, volume 100 of {\em Lecture Notes
  in Computational Science and Engineering}. Springer, 2014.

\bibitem[HMP14b]{HMP13}
P. Henning, A. M{\aa}lqvist, and D. Peterseim.
\newblock A localized orthogonal decomposition method for semi-linear elliptic
  problems.
\newblock {\em ESAIM: Mathematical Modelling and Numerical Analysis},
  48:1331--1349, 9 2014.

\bibitem[HP13]{HP12}
P.~Henning and D.~Peterseim.
\newblock Oversampling for the {M}ultiscale {F}inite {E}lement {M}ethod.
\newblock {\em Multiscale Model. Simul.}, 11(4):1149--1175, 2013.

\bibitem[Het02]{hetmaniukphd}
U.~Hetmaniuk.
\newblock Fictitious domain decomposition methods for a class of partially axisymmetric problems: application to the scattering of acoustic waves. 
\newblock {\em PhD thesis, University of Colorado}, 2002.

\bibitem[Het07]{hetmaniuk}
U.~Hetmaniuk.
\newblock Stability estimates for a class of {H}elmholtz problems.
\newblock {\em Commun. Math. Sci.}, 5(3):665--678, 2007.

\bibitem[HMP11]{perugia}
R.~Hiptmair, A.~Moiola, and I.~Perugia.
\newblock Plane wave discontinuous {G}alerkin methods for the 2{D} {H}elmholtz
  equation: analysis of the {$p$}-version.
\newblock {\em SIAM J. Numer. Anal.}, 49(1):264--284, 2011.

\bibitem[HMP14c]{hiptmair}
R. Hiptmair, A. Moiola, and I. Perugia.
\newblock Trefftz discontinuous {G}alerkin methods for acoustic scattering on
  locally refined meshes.
\newblock {\em Appl. Numer. Math.}, 79:79--91, 2014.

\bibitem[Hug95]{Hughes:1995}
T.~J.~R. Hughes.
\newblock Multiscale phenomena: {G}reen's functions, the
  {D}irichlet-to-{N}eumann formulation, subgrid scale models, bubbles and the
  origins of stabilized methods.
\newblock {\em Comput. Methods Appl. Mech. Engrg.}, 127(1-4):387--401, 1995.

\bibitem[HFMQ98]{MR1660141}
T.~J.~R. Hughes, G.~R. Feij{\'o}o, L. Mazzei, and J.-B.
  Quincy.
\newblock The variational multiscale method---a paradigm for computational
  mechanics.
\newblock {\em Comput. Methods Appl. Mech. Engrg.}, 166(1-2):3--24, 1998.

\bibitem[HS07]{MR2300286}
T.~J.~R.~Hughes and G.~Sangalli.
\newblock Variational multiscale analysis: the fine-scale {G}reen's function,
  projection, optimization, localization, and stabilized methods.
\newblock {\em SIAM J. Numer. Anal.}, 45(2):539--557, 2007.

\bibitem[M{\aa}l11]{Malqvist:2011}
A. M{\aa}lqvist.
\newblock Multiscale methods for elliptic problems.
\newblock {\em Multiscale Model. Simul.}, 9(3):1064--1086, 2011.

\bibitem[Mel95]{melenk_phd}
J.~M. Melenk.
\newblock {\em On generalized finite-element methods}.
\newblock ProQuest LLC, Ann Arbor, MI, 1995.
\newblock Thesis (Ph.D.)--University of Maryland, College Park.

\bibitem[MIB96]{makridakis}
Ch. Makridakis, F.~Ihlenburg, and I.~Babu\v{s}ka.
\newblock Analysis and finite element methods for a fluid-solid interaction
  problem in one dimension.
\newblock {\em Mathematical Models and Methods in Applied Sciences},
  06(08):1119--1141, 1996.

\bibitem[MP14b]{MP14}
A. M{\aa}lqvist and D. Peterseim.
\newblock Localization of elliptic multiscale problems.
\newblock {\em Math. Comp.}, 83(290):2583--2603, 2014.

\bibitem[MP14a]{MP12}
A. M{\aa}lqvist and D. Peterseim.
\newblock Computation of eigenvalues by numerical upscaling.
\newblock {\em Numer. Math.},  130(2):337--361, 2015.

\bibitem[MPS13]{parsania}
J.~M. Melenk, A.~Parsania, and S.~Sauter.
\newblock General {DG}-methods for highly indefinite {H}elmholtz problems.
\newblock {\em J. Sci. Comput.}, 57(3):536--581, 2013.

\bibitem[MS10]{MS10}
J.~M. Melenk and S.~Sauter.
\newblock Convergence analysis for finite element discretizations of the
  {H}elmholtz equation with {D}irichlet-to-{N}eumann boundary conditions.
\newblock {\em Math. Comp.}, 79(272):1871--1914, 2010.

\bibitem[MS11]{MS11}
J.~M. Melenk and S.~Sauter.
\newblock Wavenumber explicit convergence analysis for {G}alerkin
  discretizations of the {H}elmholtz equation.
\newblock {\em SIAM J. Numer. Anal.}, 49(3):1210--1243, 2011.

\bibitem[{Pet}15]{Peterseim2015}
D.~{Peterseim}.
\newblock  Variational multiscale stabilization and the exponential decay of fine-scale correctors. 
\newblock {\em ArXiv e-prints}, 1505.07611, 2015.

\bibitem[PS14]{Peterseim.Scheichl:2014}
D.~{Peterseim} and R.~{Scheichl}.
\newblock Rigorous numerical upscaling at high contrast.
\newblock {\em in preparation}, 2015+.

\bibitem[RHP08]{MR2430350}
G.~Rozza, D.~B.~P. Huynh, and A.~T. Patera.
\newblock Reduced basis approximation and a posteriori error estimation for
  affinely parametrized elliptic coercive partial differential equations:
  application to transport and continuum mechanics.
\newblock {\em Arch. Comput. Methods Eng.}, 15(3):229--275, 2008.

\bibitem[Sau06]{sauterref}
S.~A. Sauter.
\newblock A refined finite element convergence theory for highly indefinite
  {H}elmholtz problems.
\newblock {\em Computing}, 78(2):101--115, 2006.

\bibitem[Sch74]{schatz}
A.~H. Schatz.
\newblock An observation concerning {R}itz-{G}alerkin methods with indefinite
  bilinear forms.
\newblock {\em Math. Comp.}, 28:959--962, 1974.

\bibitem[Szy06]{MR2279449}
D.~B. Szyld.
\newblock The many proofs of an identity on the norm of oblique projections.
\newblock {\em Numer. Algorithms}, 42(3-4):309--323, 2006.

\bibitem[TF06]{MR2219901}
R. Tezaur and C. Farhat.
\newblock Three-dimensional discontinuous {G}alerkin elements with plane waves
  and {L}agrange multipliers for the solution of mid-frequency {H}elmholtz
  problems.
\newblock {\em Internat. J. Numer. Methods Engrg.}, 66(5):796--815, 2006.

\bibitem[Wu14]{Wu01072014}
H. Wu.
\newblock Pre-asymptotic error analysis of cip-fem and fem for the helmholtz
  equation with high wave number. part i: linear version.
\newblock {\em IMA Journal of Numerical Analysis}, 34(3):1266--1288, 2014.

\bibitem[ZMD{\etalchar{+}}11]{Zitelli20112406}
J.~Zitelli, I.~Muga, L.~Demkowicz, J.~Gopalakrishnan, D.~Pardo, and V.M. Calo.
\newblock A class of discontinuous petrov–galerkin methods. part iv: The
  optimal test norm and time-harmonic wave propagation in 1d.
\newblock {\em Journal of Computational Physics}, 230(7):2406 -- 2432, 2011.

\end{thebibliography}
\end{document}